\documentclass[11pt]{amsart}

\newtheorem{case}{Case}
\newtheorem{subcase}{Subcase}[case]

\usepackage{graphicx}
\usepackage{epstopdf}
\usepackage{amsthm,amsmath,amssymb,amsxtra,amscd}
\usepackage{verbatim}
\usepackage{rotating}
\usepackage{multirow}
\usepackage{multicol}
\usepackage{url}
\usepackage{algorithmic}
\usepackage{xypic}
\usepackage{mathtools}

\newtheorem{theorem}{Theorem}[section]

\newtheorem{lemma}[theorem]{Lemma}
\newtheorem{corollary}[theorem]{Corollary}

\newtheorem{definition}[theorem]{Definition}

\newtheorem{proposition}[theorem]{Proposition}

\newtheorem*{example*}{Example}

\newtheoremstyle{myexample}{3pt}{3pt}{\rmfamily}{}{\itshape}{:}{ }{\thmname{#1}\thmnumber{ #2}\thmnote{ (#3)}}
\theoremstyle{myexample}

\newtheoremstyle{myremark}{3pt}{3pt}{\rmfamily}{}{\itshape}{:}{ }{\thmname{#1}}
\theoremstyle{myremark}

\newtheorem*{observation*}{Observation}

\newtheoremstyle{conjecture}{3pt}{3pt}{\itshape}{}{\bfseries}{.}{ }{\thmname{#1}\thmnote{ (#3)}}
\theoremstyle{conjecture}

\newtheorem*{question*}{Question}
\newtheorem{conjecture}{Conjecture}
\newtheorem{theorem*}{Theorem}

\numberwithin{equation}{section}

\newcounter{algorithm}
\setcounter{algorithm}{0}
\renewcommand{\thealgorithm}{\thesection.\arabic{algorithm}}

\begin{document}

\title[About the second neighborhood conjecture]{About the second neighborhood conjecture for tournaments missing two stars or disjoint paths}

\author{Moussa Daamouch}
\address{Department of Mathematics and Physics, Lebanese International University LIU, Saida, Lebanon}
\email{moussadaamouch1923@gmail.com}
\address{KALMA Laboratory, Department of Mathematics, Lebanese University, Beirut, Lebanon}
\email{moussa.daamouch@ul.edu.lb}

\author{Salman Ghazal}
\address{Department of Mathematics and Physics, Lebanese International University LIU, Beirut, Lebanon}
\email{salman.ghazal@liu.edu.lb}
\address{Department of Mathematics, Lebanese University, Beirut, Lebanon}
\email{salman.ghazal@ul.edu.lb}
\address{Department of Mathematics and Physics, The International University of Beirut BIU, Beirut, Lebanon}
\email{salman.ghazal@liu.edu.lb}

\author{Darine Al-Mniny}
\address{KALMA Laboratory, Department of Mathematics, Lebanese University, Beirut, Lebanon}
\email{almniny.darine@gmail.com}
\address{Department of Mathematics and Physics, The International University of Beirut BIU, Beirut, Lebanon}
\email{darine.mniny@liu.edu.lb}
\address{Department of Mathematics and Physics, Lebanese International University LIU, Rayak, Lebanon}
\email{darine.mniny@liu.edu.lb}

\subjclass[2000]{05C20}
\keywords{second neighborhood conjecture, tournament, star, path}

\begin{abstract}
Seymour's Second Neighborhood Conjecture (SSNC) asserts that every oriented finite simple graph (without digons) has a vertex whose second out-neighborhood is at least as large as its first out-neighborhood. Such a vertex is said to have the second neighborhood property (SNP). In this paper, we prove SSNC for tournaments missing two stars. We also study SSNC for tournaments missing disjoint paths and, particularly, in the case of missing paths of length 2. In some cases, we exhibit at least two vertices with the SNP.
\end{abstract}

\maketitle

\section{Introduction}
In this paper, a {directed graph} (or a {digraph}) $D$ is a pair of two disjoint sets $(V,E)$, where $E \subset V^2$. $V$ is called the {vertex set} of $D$ and is denoted by $V(D)$. $E$ is called the {edge set} ({arc set}) of $D$ and is denoted by $E(D)$. All the digraphs in this paper are finite oriented graphs (i.e. $V$ is finite, $(u,u) \notin E$ and there is at most one arc between $u$ and $v$ for all $u, v \in V$). The out-neighborhood (resp. in-neighborhood) of a vertex $v$ is denoted by $N_D^+(v)$ or $N^+(v)$ (resp. $N_D^-(v)$ or $N^-(v)$) and the {second out-neighborhood} (resp. second in-neighborhood) of $v$ is denoted by $N_D^{++}(v)$ or $N^{++}(v)$ (resp. $N_D^{--}(v)$ or $N^{--}(v)$). We say that a vertex $v$ has the \emph{second neighborhood property} (SNP), if  $|N^+(v)| \leq |N^{++}(v)|$.  Sometimes, we will use the notation $v^+$, $v^-$ and $v^{++}$ instead of $N^+(v)$, $N^-(v)$ and $N^{++}(v)$ respectively.
In 1990, Paul Seymour proposed the following conjecture.
\begin{conjecture}
In every finite simple digraph, there exists a vertex $v$ such that $|N^+(v)| \leq |N^{++}(v)|$.
\end{conjecture}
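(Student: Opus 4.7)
The plan is to attack the conjecture by combining a weight-shifting argument on a carefully chosen linear order of the vertex set with an extremal analysis of potential counterexamples. Because the full conjecture is famously open, any realistic attempt must either settle the tournament case (Dean's conjecture, known via Fisher's probabilistic argument and Havet--Thomass\'e's combinatorial one) and then lift it to oriented graphs with missing arcs, or introduce a new global invariant that forces the existence of an SNP vertex. I would begin by fixing a minimal counterexample $D$ on the smallest number of vertices and recording standard reductions: $D$ has no vertex of out-degree $0$, no sink-like vertex $v$ with $N^+(v)\subseteq N^{++}(v)$, no twin vertices, and (after a short argument) no vertex $v$ with $|N^+(v)|\leq 1$.

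Next, I would try to construct a \emph{median order} $v_1,\dots,v_n$ of $D$, that is, an ordering maximizing the number of forward arcs. In a tournament, such an order has the ``sedimentation'' property that every $v_i$ dominates a sub-tournament in which it is a feedback vertex, and Havet--Thomass\'e showed that the terminal vertex $v_n$ has the SNP whenever the tournament has no dominant pair. For an arbitrary oriented graph, missing arcs disrupt this structure, so I would attempt a weighted analogue: assign fractional weights $x_v\geq 0$ with $\sum_v x_v=1$, consider the two-step Markov transition on $D$, and try to produce via a minimax/averaging argument a vertex $v$ for which $\sum_{u\in N^{++}(v)} x_u \geq \sum_{u\in N^+(v)} x_u$. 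Fisher's resolution of Dean's conjecture uses exactly such a weighting, with the weights coming from a Nash equilibrium of an associated zero-sum game; the task is to adapt the game so that nonadjacencies are accounted for without destroying the equilibrium's fixed-point properties.

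The key obstacle, and the reason the conjecture has resisted proof for three decades, is that a missing arc $uv$ simultaneously removes $v$ from $N^+(u)$ and potentially shrinks $N^{++}(u)$ by deleting two-step paths through $v$; unlike in a tournament, there is no automatic trade-off guaranteeing that the latter loss is compensated by the former. Any working proof must therefore introduce a deficit parameter measuring the contribution of nonadjacencies and show that this deficit is globally bounded. I expect that the most promising route is local: at a vertex $v$ of minimum out-degree, the induced digraph on $N^+(v)$ is already ``close to'' a tournament, and an SNP vertex should be findable by a careful inspection of the interaction between $v$, $N^+(v)$ and $N^{++}(v)$; this is consistent with the partial results of Kaneko--Locke for small minimum out-degree.

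Given the apparent inaccessibility of the full conjecture by current methods, my contingency, and indeed the strategy adopted in the present paper, is to restrict to oriented graphs obtained from a tournament $T$ by deleting a prescribed small subgraph $H$. In this setting one can hope to isolate the effect of the missing arcs through a case analysis on vertices incident to $H$, apply the known tournament results to $T$, and then repair the conclusion locally. The hard part will remain the same in spirit, controlling the interaction between $H$ and the second out-neighborhoods, but the bounded structure of $H$ makes the analysis tractable, which is precisely why the authors settle for the cases $H=$ two stars or a disjoint union of paths.
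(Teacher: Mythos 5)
The statement you were asked to prove is Seymour's Second Neighborhood Conjecture itself, which the paper states as an open \emph{conjecture} and does not prove; it only establishes special cases (tournaments missing two stars, and tournaments missing disjoint paths under additional hypotheses). So there is no proof in the paper to compare yours against, and, more importantly, your submission is not a proof of anything: every step is conditional (``I would try,'' ``I expect,'' ``my contingency''), and no claim in the statement is actually established. The genuine gap is therefore total --- you have written a research program, not an argument. That is a defensible response to being handed a famous open problem, and your survey of the landscape is broadly accurate (Fisher's game-theoretic/weighting proof of Dean's conjecture, Havet--Thomass\'e's median orders and feed vertices, the reduction heuristics on a minimal counterexample, the Kaneko--Locke result for small minimum out-degree, and the general strategy of completing an oriented graph to a tournament and repairing locally, which is indeed the completion approach this paper uses for its partial results). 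But none of these observations closes the problem, and you correctly identify the core obstruction --- a missing arc $uv$ can shrink $N^{++}(u)$ without a compensating gain --- without overcoming it.

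Two smaller inaccuracies worth flagging if you revise: Havet and Thomass\'e proved that the feed vertex of \emph{any} median order of a tournament has the SNP unconditionally (the ``no sink'' hypothesis is only needed to produce a \emph{second} SNP vertex, via sedimentation), not merely ``whenever the tournament has no dominant pair''; and the paper's actual machinery for the missing-subgraph cases is not a direct case analysis on vertices incident to $H$, but the dependency digraph $\Delta$, good completions, and good median orders, which let one orient the missing edges conveniently so that feed vertices retain the SNP in the original digraph.
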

It soon became an important topic of interest in graph theory. Although much research was done in that field, SSNC still remains open. It was proven only for some very specific classes of digraphs. In 1995, Dean and Latka \cite{dean1995squaring} conjectured similar statement for tournaments. This problem, known as Dean's conjecture, has been solved in 1996 by Fisher \cite{fisher1996squaring}. In 2000, Havet and Thomass\'e \cite{havet2000median} gave a short proof of Dean's conjecture, using median orders.  A \emph{median order} of a digraph $D$ is a linear order $L=v_1v_2 \ldots v_n$ of its vertex set $V$ such that $|\{(v_i, v_j) : i < j\}|$ (the number of arcs directed from left to right) is as large as possible. The last vertex $v_n$ of a median order $L$ is called a \emph{feed vertex}. Havet and Thomass\'e \cite{havet2000median} proved that, for tournaments, every feed vertex has the SNP. Their proof also yields the existence of two vertices  having the SNP under the condition that no vertex is a \emph{sink} (that is, a vertex of out-degree 0).

\begin{theorem} \cite{havet2000median} \label{t.tourn}
Every tournament with no sink has at least two vertices with the SNP.
\end{theorem}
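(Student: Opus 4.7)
The plan is to follow Havet and Thomass\'e's median-order strategy, locating a second SNP vertex by exploiting the no-sink hypothesis. Fix a median order $L = v_1 v_2 \cdots v_n$ of $T$; by their theorem the feed $v_n$ has the SNP via an explicit injection $\phi \colon N^+(v_n) \to N^-(v_n) \cap N^{++}(v_n)$, produced by a right-to-left parenthesis-matching sweep of $L$. Two structural facts to be used repeatedly: a prefix of a median order is itself a median order of the induced sub-tournament; and $v_{n-1} \to v_n$ holds in every median order (otherwise swapping $v_{n-1}$ and $v_n$ would strictly increase the left-to-right arc count). Since $T$ is sink-free, $v_n$ has at least one out-neighbor, so the rightmost out-neighbor $v_f$ of $v_n$ in $L$ is well defined, and $f \leq n-2$. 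The goal is to show that one of $v_{n-1}$, $v_f$ (both distinct from $v_n$) has the SNP in $T$.

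Consider $v_{n-1}$ first. Since $v_1 \cdots v_{n-1}$ is a median order of $T - v_n$, Havet-Thomass\'e's theorem gives the SNP for $v_{n-1}$ in $T - v_n$. Passing to $T$ introduces exactly one new out-neighbor, namely $v_n$, so a single new second out-neighbor will restore SNP. Such a new second out-neighbor exists whenever there is a $b \in N^+(v_n)$ with $b \to v_{n-1}$: then $b$ is a second out-neighbor of $v_{n-1}$ via $v_{n-1} \to v_n \to b$ (and $b \notin N^+(v_{n-1})$). So under the hypothesis $N^+(v_n) \not\subseteq N^+(v_{n-1})$, the vertex $v_{n-1}$ has the SNP in $T$ and we are done.

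In the complementary subcase $N^+(v_n) \subseteq N^+(v_{n-1})$ we invoke $v_f$. The prefix $v_1 \cdots v_f$ is a median order of $T' = T[\{v_1, \ldots, v_f\}]$, so by Havet-Thomass\'e, $v_f$ has the SNP in $T'$. Upgrading to $T$ requires accommodating the extra out-neighbors $A = N^+_T(v_f) \cap \{v_{f+1}, \ldots, v_{n-1}\}$ of $v_f$ (note $v_n \to v_f$, so $v_n \notin A$); by the choice of $v_f$ as the rightmost blue vertex, each element of $A$ lies in $N^-(v_n)$, and in particular $v_n \in N^{++}_T(v_f)$. The full upgrade comes from rerunning the parenthesis-matching sweep on the suffix $v_f v_{f+1} \cdots v_n$, itself a median order, with $v_f$ as the pivot and the suffix vertices classified as out- or in-neighbors of $v_f$; this should yield an injection from $A$ into new second out-neighbors of $v_f$ in the suffix, disjoint from $N^{++}_{T'}(v_f)$. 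The main obstacle is precisely this last step: verifying that in the degenerate configuration --- in which $v_{n-1}$ dominates $v_f$ and all of $N^+(v_n)$ --- the suffix sweep really outputs enough new second out-neighbors of $v_f$ and that no double-counting occurs across the two matchings. The median-order property applied to sub-intervals, combined with the fact that every interior vertex of the suffix points to $v_n$, is what should make the bookkeeping go through.
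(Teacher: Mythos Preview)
Your approach diverges from the Havet--Thomass\'e argument (which is the one the paper adopts, via the sedimentation machinery set up before Theorem~\ref{t.nosink}), and each of your two cases has a real gap.

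In the first case ($N^+(v_n)\not\subseteq N^+(v_{n-1})$) you claim that $b$ is a \emph{new} second out-neighbor of $v_{n-1}$, meaning $b\notin N^{++}_{T-v_n}(v_{n-1})$. Nothing you wrote rules out a path $v_{n-1}\to x\to b$ with $x\neq v_n$ inside $T-v_n$; in fact $b$ can perfectly well be a good vertex of the prefix order $L'=v_1\cdots v_{n-1}$ (take $b=v_j$ with $v_{n-1}\to v_i\to v_j$ for some $i<j$, which is compatible with $v_n\to b$ and $b\to v_{n-1}$). When that happens the inequality $|N^{++}_T(v_{n-1})|\ge |N^{++}_{T-v_n}(v_{n-1})|+1$ simply fails, and the SNP for $v_{n-1}$ in $T$ does not follow. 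You would need either the strict inequality $|N^+_{T-v_n}(v_{n-1})|<|G_{L'}|$ or a specific $b$ provably outside $G_{L'}$, and neither is guaranteed by the hypothesis $N^+(v_n)\not\subseteq N^+(v_{n-1})$.

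In the second case you already flag the obstacle, and it is fatal as stated: the suffix $v_f v_{f+1}\cdots v_n$ has $v_f$ at the \emph{left} end, whereas the Havet--Thomass\'e matching manufactures second out-neighbors for the \emph{feed} (rightmost) vertex. There is no mirror statement producing an injection from $A=N^+_T(v_f)\cap\{v_{f+1},\ldots,v_{n-1}\}$ into new second out-neighbors of $v_f$; the feedback property of median orders bounds in/out-degrees within intervals but does not supply that bookkeeping. The additional hypothesis $N^+(v_n)\subseteq N^+(v_{n-1})$ only tells you about $v_{n-1}$, not about how the vertices of $A$ relate to one another or to $N^-(v_f)$.

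The correct route, and the one underlying the paper's Theorem~\ref{t.nosink}, is sedimentation applied to $L'=v_1\cdots v_{n-1}$: if the iteration $Sed^q(L')$ is stable, some iterate yields the strict inequality $|N^+(y_{n-1})|<|G_{Sed^q(L')}|$, which absorbs the single extra out-neighbor $v_n$; if periodic, the rightmost out-neighbor $v_j$ of $v_n$ must become a bad vertex at some stage $q$, and then $v_j$ lies in $N^{++}$ of the current feed while being disjoint from $G_{Sed^q(L')}$ by definition of ``bad''. This disjointness is exactly what your Case~1 argument is missing.
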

Unfortunately, for general digraphs it is not guaranteed that a feed vertex has the SNP, see e.g. \cite{havet2000median}.  However, median orders are still used for several cases by applying the completion approach as following. For all $u,v \in V(D)$ such that $(u,v) \notin E(D)$ and $(v,u) \notin E(D)$, we make an arc between $u$ and $v$ in some  proper way to obtain a tournament $T$. Then, we consider a particular median order $L$ of $T$ (clearly, the feed vertex of $L$ has the SNP in $T$) and try to prove that this feed vertex has the SNP in $D$ as well.  In 2007, Fidler and Yuster \cite{fidler2007remarks} used median orders and an other tool called dependency digraph to prove that SSNC holds for tournaments missing a matching. Ghazal \cite{ghazal2012, ghazal2013contribution, ghazal2015remark, ghazal2016about}, also used median orders, dependency digraph and good digraphs, to show that the conjecture holds for some new classes of digraphs (tournaments missing $n$-generalized star and other classes of oriented graphs). In order to generalize the results in \cite{ghazal2012, ghazal2013contribution}, Al-Mniny and Ghazal \cite{al2021second} proved SSNC for tournaments missing a specific graph. Dara et al. \cite{dara2022extending} proved SSNC for tournaments missing a matching and a star, extending results in \cite{fidler2007remarks} and \cite{ghazal2016about}.  The main results in \cite{al2021second, dara2022extending, fidler2007remarks, ghazal2012, ghazal2013contribution, ghazal2015remark,ghazal2016about} are obtained using the completion approach.
Recently, SSNC is proved for some new classes of digraphs (see \cite{cary2019vertices, daamouch2020ant, daamouch2020seymour, daamouch2021seymour, hassan2021seymour}).\\
In this paper, we prove SSNC for tournaments missing two stars. Also, we study SSNC for tournaments missing disjoint paths  of length at most 2 and prove it under some conditions.

\section{Useful Tools}
\subsection{Dependency Digraph $\Delta$}

Let $D$ be an oriented graph. For all $x,y \in V(D)$, if $(x,y) \notin E(D)$ and $(y,x) \notin E(D)$ then $xy$ is called a \emph{missing edge}. A vertex $v$ of $D$ is called a \emph{whole} vertex if $vx$ is not a missing edge of $D$ for all $x\in V(D)$. The \emph{missing graph} $G$ of $D$ is defined to be the graph formed by the missing edges of $D$, formally, $G$ is the graph whose edge set is the set of all the missing edges of $D$ and whose vertex set is the set of the non-whole vertices. In this case, we say that $D$ is missing $G$. Let $xy$ and $ab$ be two missing edges of $D$. We say that $xy$ \emph{loses} to $ab$, and we write $xy \rightarrow ab$ if: $x\rightarrow a$ and $b\notin N^+(x) \cup N^{++}(x)$ as well as $y\rightarrow b$ and $a\notin N^+(y) \cup N^{++}(y)$. The \emph{dependency digraph} of $D$ is denoted by $\Delta(D)$ (or $\Delta$) and is defined as follows: The vertex set of $\Delta$ is $V(\Delta)=\{ab\colon ab\ \ is \ a \ missing\ edge\ of\ D\}$ and the edge set of $\Delta$ is $E(\Delta)=\{(ab,cd)\colon ab \rightarrow cd\}$.

\begin{definition}[Good Missing Edge]\cite{ghazal2012} A missing edge $ab$ of $D$ is called a \emph{good missing edge} if it satisfies (i) or (ii):
\begin{enumerate}
\item[(i)] For all $v\in V(D)\setminus \{a,b\}$, if  $v\rightarrow a$ then $b \in N^+(v) \cup N^{++}(v)$.
\item[(ii)] For all $v\in V(D)\setminus \{a,b\}$, if  $v\rightarrow b$ then $a \in N^+(v) \cup N^{++}(v)$.
\end{enumerate}

If $ab$ satisfies (i), then $(a,b)$ is said to be a \textit{convenient orientation} of $ab$. Else, $(b,a)$ is a convenient orientation of $ab$.
\end{definition}
Note that, by assigning a convenient orientation to a good missing edge $ab$, the out-neighborhood $N^+(v)$ and the second out-neighborhood $N^{++}(v)$ do not modify for any vertex $v \in  V(D)\setminus \{a,b\}$. This fact is useful when we apply the completion approach. 

The following lemma gives a characterization of good missing edges. 
\begin{lemma}\cite{fidler2007remarks}
Let $D$ be an oriented graph and let $\Delta$ be its dependency digraph. A missing edge $ab$ is good if and only if $d_{\Delta}^-(ab)=0$.
\end{lemma}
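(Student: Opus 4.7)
The plan is to prove the two implications separately, with the harder direction being the contrapositive of the "only if".

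For the easy direction, assume $d_\Delta^-(ab) \geq 1$, i.e.\ there exists a missing edge $xy$ with $xy \to ab$ in $\Delta$. Unpacking the definition, this gives vertices $x,y$ with $x\to a$ and $b\notin N^+(x)\cup N^{++}(x)$, and with $y\to b$ and $a\notin N^+(y)\cup N^{++}(y)$. I would first observe that $x\neq a,b$ and $y\neq a,b$: $x=a$ is ruled out because $D$ has no loops, and $x=b$ would force $b\to a$, contradicting that $ab$ is a missing edge (similarly for $y$). The vertex $x$ then witnesses the failure of condition (i) and $y$ witnesses the failure of condition (ii), so $ab$ is not good.

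For the converse, I would argue the contrapositive: if $ab$ is not good, I must produce some missing edge losing to $ab$. Since both (i) and (ii) fail, there exist $v,w\in V(D)\setminus\{a,b\}$ with $v\to a$, $b\notin N^+(v)\cup N^{++}(v)$, and $w\to b$, $a\notin N^+(w)\cup N^{++}(w)$. The natural candidate is $vw$, so the main task is to verify that $vw$ is actually a missing edge of $D$. This is the step where I expect a small amount of care is needed. First, $v\neq w$: otherwise $v=w$ would have both $a$ and $b$ in $N^+(v)$, contradicting $b\notin N^+(v)$. Next, I rule out $v\to w$: if $v\to w$, then since $w\to b$ we would get $b\in N^{++}(v)$, contradiction. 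Symmetrically, if $w\to v$ then $v\to a$ gives $a\in N^{++}(w)$, contradiction. Hence neither arc is present, so $vw$ is a missing edge.

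Finally, a direct check from the definition of $\to$ in $\Delta$ shows $vw\to ab$, using exactly the four defining conditions on $v$ and $w$ listed above. Therefore $d_\Delta^-(ab)\geq 1$, completing the equivalence. The only real obstacle is verifying that $vw$ is a missing edge rather than an arc, which as shown reduces cleanly to the hypotheses that $b\notin N^{++}(v)$ and $a\notin N^{++}(w)$.
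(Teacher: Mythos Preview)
The paper does not give its own proof of this lemma; it is quoted from \cite{fidler2007remarks} and stated without argument. Your proof is correct and is exactly the natural one: unwinding the definition of the losing relation gives the forward direction immediately, and for the converse the only nontrivial step is showing that the two witnesses $v,w$ to the failure of (i) and (ii) form a missing edge, which you handle cleanly by ruling out $v\to w$ (else $b\in N^{++}(v)$) and $w\to v$ (else $a\in N^{++}(w)$). There is nothing to add.
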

\subsection{Good Digraph and Good Median Order}
Let $C$ be a connected component of $\Delta$. Set $K(C)=\{u \in V(D)\colon uv \in C$ for some $v \in V(D)\}$. The interval graph of $D$, denoted by $\mathcal{I}_D$ is defined as follows: $V(\mathcal{I}_D)=\{C\colon C$ is a connected component of $\Delta\}$, and $E(\mathcal{I}_D)=\{\{C_1,C_2\}\colon K(C_1)\cap K(C_2)\neq \phi\}$. Let $\xi$ be a connected component of $\mathcal{I}_D$. We set $K(\xi)=\cup_{C \in \xi}K(C)$. Note that if $uv$ is a missing edge of $D$, then there is a unique connected component $\xi$ of $\mathcal{I}_D$ such that $u,v \in K(\xi).$
Let $f\in V(D)$, we set
\begin{displaymath}
J(f)=\begin{cases}
    \{f\} & \text{if $f$ is a whole vertex;}\\
    K(\xi) & \text{otherwise, where $f \in K(\xi)$}.
\end{cases}    
\end{displaymath}
Clearly, if $x \in J(f)$, then $J(f) = J(x)$. While if $x \notin J(f)$, then $x$ is adjacent to every vertex in $J(f)$.

Note that, in this paper, we only consider non weighted digraphs. However, we need some prerequisites that are obtained on vertex weighted digraphs. For this reason, we introduce them here. It is clear that the results obtained on weighted digraphs can be used for non weighted digraphs by taking the weight of every vertex equals 1. Let $D=(V,E)$ be an oriented graph and let $\omega: V \rightarrow \mathcal{R}_+$ be a strictly positive real valued function. The couple $(D,\omega)$ is called a {weighted digraph}. Let $K\subseteq V(D)$, $K$ is called an \emph{interval} of $D$ if for all $ u,v \in K$ we have $N^+(u)\setminus K = N^+(v)\setminus K$ and $N^-(u)\setminus K = N^-(v)\setminus K$. We say that $(D,\omega)$ is a \emph{good digraph} if the sets $K(\xi)$'s are intervals of $D$.\\
For $S \subseteq V$, we define the weight of $S$ as $\omega(S)=\displaystyle\sum_{x\in S}\omega(x)$. We define the weight of an arc $e=(u,v)$ by $\omega(e)=\omega(u) \times \omega(v)$. A \emph{weighted median order} of a digraph $(D,\omega)$ is a linear order $L=v_1v_2 \ldots v_n$ of its vertex set $V$ such that $\omega(\{(v_i, v_j) : i < j\})$  is as large as possible.\\
The following sets are called interval of $L$: $[v_i,v_j]:=[i,j]:=\{v_i,\ldots,v_j\}$ and $]v_i,v_j[ := ]i,j[ := \{v_{i+1},\ldots,v_{j-1}\}$. We may sometimes write $[i,j]$ instead of $D[i,j]$.

\begin{lemma}\cite{ghazal2015remark}
Let $(D,\omega)$ be a good digraph. There exists a weighted median order $L=x_1,\ldots,x_n$ such that the $K(\xi)$'s form intervals of $L$. Such a weighted median order $L$ is called good weighted median order of $D$.
\end{lemma}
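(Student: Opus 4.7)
The plan is to consolidate the sets $K(\xi)$ one at a time. Start with any weighted median order $L = x_1 \cdots x_n$ of $(D,\omega)$, fix a component $\xi$ of $\mathcal{I}_D$, and write $K(\xi) = \{u_1, \ldots, u_k\}$ in the order induced by $L$. For each $q \in \{0, 1, \ldots, n-k\}$, let $M_q$ denote the ordering obtained from $L$ by pulling $u_1, \ldots, u_k$ into a single contiguous block inserted just after the $q$-th vertex of $V(D)\setminus K(\xi)$ (in $L$-order), keeping the relative positions of all other vertices unchanged. The internal weights inside $K(\xi)$ and inside $V(D)\setminus K(\xi)$ agree in $L$ and in every $M_q$, so only the cross-weight between the two blocks varies; denote this cross-weight by $W_{\mathrm{cross}}(\cdot)$.

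Because $K(\xi)$ is an interval of $D$ (the good digraph hypothesis), every vertex $v \in V(D)\setminus K(\xi)$ has a common arc direction to every element of $K(\xi)$, with no missing edges between $v$ and $K(\xi)$. Writing $A = \{v : v \to K(\xi)\}$ and $B = \{v : K(\xi) \to v\}$, we obtain $A \sqcup B = V(D)\setminus K(\xi)$. A short rearrangement, based on expressing the cross-weight as $W_{\mathrm{cross}}(L) = \sum_{j} \omega(v_j)\,\epsilon_j\, F_L(j) + \omega(K(\xi))\omega(B)$ with $\epsilon_j = +1$ if $v_j \in A$ and $-1$ if $v_j \in B$, and $F_L(j)$ the total $\omega$-weight of $K(\xi)$-vertices lying to the right of $v_j$ in $L$, yields the identity
\[
W_{\mathrm{cross}}(L) \;=\; \sum_{i=1}^{k} \frac{\omega(u_i)}{\omega(K(\xi))}\, W_{\mathrm{cross}}(M_{q_i}),
\]
where $q_i$ is the number of vertices of $V(D)\setminus K(\xi)$ occurring before $u_i$ in $L$. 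The right-hand side is a convex combination of values among $W_{\mathrm{cross}}(M_0),\ldots,W_{\mathrm{cross}}(M_{n-k})$, so $W_{\mathrm{cross}}(L) \leq \max_q W_{\mathrm{cross}}(M_q)$; picking $q^*$ achieving the maximum gives $W(M_{q^*}) \geq W(L)$. Since $L$ is a weighted median order the inequality must be tight, $M_{q^*}$ is itself a weighted median order, and $K(\xi)$ is contiguous in $M_{q^*}$. Replace $L$ by $M_{q^*}$ and iterate with the next component of $\mathcal{I}_D$.

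The remaining point is that consolidating $K(\xi)$ must not destroy the contiguity of a previously consolidated $K(\xi')$. Since $K(\xi')$ is also an interval of $D$ and $\xi' \neq \xi$, every arc between $K(\xi)$ and $K(\xi')$ points in the same direction, so $K(\xi') \subseteq A$ or $K(\xi') \subseteq B$. In the first case, along the subsequence of $L$ occupied by $K(\xi')$ the cumulative sum $S(q) = \sum_{j \leq q} \omega(v_j)\epsilon_j$ is strictly increasing (all $\epsilon_j = +1$ there), which forces every maximizer of $W_{\mathrm{cross}}(M_q)$ to lie outside that range; the second case is symmetric. Hence $q^*$ can be chosen so as not to split $K(\xi')$, and after one pass over the components of $\mathcal{I}_D$ the resulting weighted median order has every $K(\xi)$ as an interval of the order.

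I expect the main obstacle to be the convex-combination identity for $W_{\mathrm{cross}}(L)$: it is where the interval property of $K(\xi)$ enters in an essential way, and the cleanest route is to view $F_L$ as a non-increasing step function whose plateaus occur exactly at the positions $q_1,\ldots,q_k$, which lets one swap the order of summation and recognise each partial sum as some $W_{\mathrm{cross}}(M_{q_i})$.
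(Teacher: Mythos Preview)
The paper does not supply its own proof of this lemma: it is quoted verbatim from \cite{ghazal2015remark} and used as a black box. So there is no in-paper argument to compare yours against.

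That said, your approach is sound. The convex-combination identity you highlight is correct and follows directly once you write, for each $u_i\in K(\xi)$ with $q_i$ complement vertices to its left,
\[
W_{\mathrm{cross}}(L)=\sum_{i=1}^{k}\omega(u_i)\Bigl[\sum_{\substack{j\le q_i\\ v_j\in A}}\omega(v_j)+\sum_{\substack{j>q_i\\ v_j\in B}}\omega(v_j)\Bigr]
=\sum_{i=1}^{k}\frac{\omega(u_i)}{\omega(K(\xi))}\,W_{\mathrm{cross}}(M_{q_i}),
\]
which is exactly the averaging you state; no step-function reformulation is needed. The ``interval'' hypothesis on $K(\xi)$ is used precisely to guarantee the partition $V(D)\setminus K(\xi)=A\sqcup B$ with no missing cross edges, and the paper's remark that any $x\notin J(f)$ is adjacent to every vertex of $J(f)$ confirms this. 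Your preservation argument for a previously consolidated $K(\xi')$ is also correct: since $K(\xi')\subseteq A$ or $K(\xi')\subseteq B$, the function $q\mapsto W_{\mathrm{cross}}(M_q)$ is strictly monotone across the block occupied by $K(\xi')$ (weights are strictly positive), so no global maximiser $q^*$ lies strictly inside that block.

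A more common proof of this type of statement proceeds by contraction: replace each interval $K(\xi)$ by a single vertex of weight $\omega(K(\xi))$, take a weighted median order of the quotient, and then blow each contracted vertex back up to an internal weighted median order of $D[K(\xi)]$. Your convex-combination argument is a pleasant alternative that stays inside the original digraph and makes the ``some $M_q$ is optimal'' step completely explicit.
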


Let $L = v_1v_2\ldots v_n$ be a weighted median order. Among the vertices not in $N^+(v_n)$, two types are distinguished: A vertex $v_j$ is \emph{good} if there is $i< j$ such that $v_n \rightarrow v_i \rightarrow v_j$, otherwise $v_j$ is a \emph{bad vertex}. The set of good vertices of $L$ is denoted by $G_L^D$ (or $G_L$ if $D$ is clear in the context) \cite{fidler2007remarks}. Clearly, $G_L \subseteq N^{++}(v_n).$ The notion of good vertices is essential for the next theorem, and the notion of bad vertices is used in this paper to create a specific median order from another one as well as we use it in the proof of Theorem \ref{t.nosink}.
\begin{theorem} \cite{ghazal2015remark}\label{th1}
Let $(D,\omega)$ be a good weighted digraph and let $L=x_1,\ldots,x_n$ be a good weighted median order of $(D,\omega)$. For all $x \in J(x_n)$, we have $\omega(N^+(x)\setminus J(x_n)) \leq \omega(G_L \setminus J(x_n)).$
\end{theorem}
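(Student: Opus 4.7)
The plan is to reduce the statement, via a block contraction, to the classical feed-vertex bound for weighted tournaments. Write $J := J(x_n)$. Since $(D,\omega)$ is good, each $K(\xi)$ (and each whole vertex, viewed as a singleton block) is an interval of $D$; in particular $J = K(\xi_0)$ is an interval of $D$. Since $L$ is a good weighted median order, each $K(\xi)$ is also an interval of $L$, so the vertices of $L$ partition into contiguous blocks $B_1, \ldots, B_m$ with $B_m = J = \{x_{k+1}, \ldots, x_n\}$. The interval property of $D$ forces $N^+(x) \setminus J = N^+(x_n) \setminus J$ for every $x \in J$, so it suffices to prove the inequality for $x = x_n$.

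Next, I would contract each block $B_i$ to a single vertex $B_i^*$ of weight $\omega(B_i)$, obtaining a digraph $T$ on $\{B_1^*, \ldots, B_m^*\}$ in which $(A^*, B^*)$ is an arc iff every arc of $D$ between $A$ and $B$ goes from $A$ to $B$. This is well-defined because missing edges of $D$ are confined to single blocks (any missing edge lies in a single connected component $\xi$ of $\mathcal{I}_D$, so both endpoints lie in $K(\xi)$) and each block is an interval of $D$. Hence $(T,\omega)$ is a weighted tournament. The projected order $L' := B_1^*, B_2^*, \ldots, B_m^*$ is a weighted median order of $T$: for any order $M$ of $T$, the expansion $\tilde{M}$ obtained by replacing each $B_i^*$ by the internal $L$-order of $B_i$ has value equal to that of $M$ plus a fixed constant $\alpha$ (the sum, over blocks, of the weights of left-to-right arcs inside the block under its $L$-order), since the cross-block contribution matches term-by-term by the interval property. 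As $L$ is itself such an expansion of $L'$ and has maximum value among orders of $D$, $L'$ has maximum value among orders of $T$, and its feed vertex is $B_m^* = J^*$.

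Applying the weighted analogue of the Havet-Thomass\'e feed-vertex inequality to $(T,\omega)$ and $L'$ yields $\omega(N^+_T(J^*)) \le \omega(G_{L'}^T)$. It remains to identify both sides with the desired quantities in $D$. By the interval property of $D$, for each block $B \ne J$ all vertices of $B$ have the same relation to $x_n$, so $J^* \to B^*$ in $T$ iff $B \subseteq N^+_D(x_n)$; hence $\omega(N^+_T(J^*)) = \omega(N^+_D(x_n) \setminus J)$. Similarly, using contiguity of the blocks in $L$, one verifies that the block-level conditions ($B_t \to J$ in $D$ and existence of $s<t$ with $J \to B_s \to B_t$ in $D$) are equivalent to the vertex-level condition ($\exists\, i<j$ with $x_n \to x_i \to x_j$ in $D$) for some (equivalently, every) $x_j \in B_t$; this gives $B_t \subseteq G_L^D$ iff $B_t^* \in G_{L'}^T$, so $\omega(G_{L'}^T) = \omega(G_L^D \setminus J)$. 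The inequality follows.

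The main obstacle is the middle step: verifying that the projected order $L'$ is a weighted median order of the contracted tournament $T$. This is exactly where the good-digraph and good-median-order hypotheses enter, via the uniform behaviour of cross-block arcs. The weighted Havet-Thomass\'e feed-vertex bound itself is a routine weight-preserving adaptation of the argument in \cite{havet2000median}, obtained for instance by a swap or sedimentation argument at the feed vertex.
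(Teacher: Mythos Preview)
The paper does not supply its own proof of this theorem; it is quoted verbatim from \cite{ghazal2015remark} and used as a black box throughout. So there is nothing in the present paper to compare your argument against.

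That said, your proof is correct and is the natural way to establish the result. The key observations all check out: missing edges are confined to single blocks $K(\xi)$, so the contraction $T$ is a genuine tournament; the additive decomposition of the forward-arc weight into a fixed intra-block part $\alpha$ and a cross-block part equal to the value of the projected order in $T$ does transfer optimality of $L$ to optimality of $L'$; and the identification $\omega(G_{L'}^T)=\omega(G_L^D\setminus J)$ is justified, since a witness $x_i$ for the goodness of some $x_j\notin J$ cannot lie in $J$ (as $i<j$) nor in the same block as $x_j$ (else the interval property would force $x_j\in N^+(x_n)$). The weighted feed-vertex inequality for tournaments you invoke is, as you say, the straightforward weight-for-count rewriting of the swap argument in \cite{havet2000median}. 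Your reduction-by-contraction is in fact the standard way this statement is proved in the literature, so you have essentially reconstructed the argument of \cite{ghazal2015remark}.
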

We say that a vertex $v$ has the weighted SNP if $\omega(N^+(v)) \leq \omega(N^{++}(v))$. By the previous theorem,  if $x$ has the weighted SNP in $(D[J(f)], \omega)$, then it has the weighted SNP in $(D, \omega)$. Furthermore,  the completion approach can now be refined as follows. We orient some missing edges of $D$ to obtain a good digraph $D'$ (not necessarily a tournament). Then we consider a good median order of feed vertex $f$, and find a vertex $x$ having the SNP in $D'[J(f)]$. Finally, we try to prove that $x$ has the SNP in $D$ as well. 

\begin{definition}[Good Completion]
Let $D$ and $D'$ be two digraphs. We say that $D'$ is a good completion of $D$ if $V(D')=V(D)$, $E(D) \subseteq E(D')$ and $D'$ is a good digraph.
\end{definition}
\begin{theorem} \cite{ghazal2015remark}
\label{t.gazal}
Let $D$ be an oriented graph missing a matching. There is a good completion $D'$ of $D$ such that, for all $ \ f$ feed vertex of $D'$, $f$ has the SNP in $D'$ and in $D$.
\end{theorem}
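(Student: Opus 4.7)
The plan is to build $D'$ by iteratively assigning convenient orientations to good missing edges of $D$. Since the missing graph is a matching, two facts simplify the situation: assigning a convenient orientation to a good missing edge $ab$ leaves $N^+(v)$ and $N^{++}(v)$ unchanged for every $v\neq a,b$, so the other arcs of the dependency digraph $\Delta$ are preserved; and any two distinct components of $\Delta$ have disjoint vertex sets $K(C)$ (each matched vertex has a unique partner), so the components of $\mathcal{I}_D$ are exactly those of $\Delta$ and $J(v)$ is either $\{v\}$ or a single $K(C)$.

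Starting from $D$, I iterate the operation ``pick a source of the current $\Delta$ and orient it conveniently'' until no source remains. If every missing edge is oriented, then the resulting $D'$ is a tournament with $\Delta(D')=\emptyset$, which is trivially a good digraph. Otherwise, every residual component $C$ of $\Delta(D')$ contains a directed cycle of ``loses to''. The main technical step is to show that for each such residual $C$, the set $K(C)$ is an interval of $D'$. I would pursue this by considering a minimal directed cycle $a_1b_1 \to a_2b_2 \to \cdots \to a_kb_k \to a_1b_1$ in $C$ and using the negative conditions $b_{i+1}\notin N^+(a_i)\cup N^{++}(a_i)$ together with $a_{i+1}\notin N^+(b_i)\cup N^{++}(b_i)$ to argue that any outside vertex $w$ has identical adjacency towards $a_i$ and $a_{i+1}$, and similarly towards $b_i$ and $b_{i+1}$; propagating around the cycle equalises the external adjacency patterns of all endpoints. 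Verifying this propagation rigorously is the main obstacle.

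With $D'$ confirmed as a good completion, fix a good weighted median order of $D'$ with feed vertex $f$. If $J(f)=\{f\}$, then $f$ has the SNP in $D'[J(f)]$ trivially, and the consequence of Theorem \ref{th1} that an SNP inside $D'[J(f)]$ lifts to an SNP in $D'$ delivers the SNP for $f$ in $D'$ (and in the fully oriented case this is exactly Theorem \ref{t.tourn}). If instead $J(f)=K(C)$ for a residual component $C$, the structure of $D'[K(C)]$ is constrained enough (two interlocked dependency cycles on the matched endpoints) that one can exhibit the SNP for $f$ in $D'[K(C)]$ directly, after which Theorem \ref{th1} again lifts it to $D'$. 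Transferring the SNP from $D'$ back to $D$ is a matter of bookkeeping: convenient orientations preserve $N^+$ and $N^{++}$ of every vertex except the two endpoints of the edge oriented at that step, and for those endpoints the defining condition (i) or (ii) of convenience supplies the second-out-neighbour needed to absorb the loss of an arc, so the SNP inequality persists in $D$.
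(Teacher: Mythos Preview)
Your overall construction of $D'$ matches the paper's: orient the good missing edges (sources of $\Delta$) conveniently and iterate, leaving only the directed cycles of $\Delta$ untouched; then verify that each $K(C)$ is an interval so that $D'$ is a good completion, and invoke Theorem~\ref{th1}. Your observation that the matching hypothesis makes the components of $\mathcal{I}_D$ coincide with those of $\Delta$ is correct and is exactly what underlies the paper's argument. The sketch for ``$K(C)$ is an interval'' is the standard cycle-propagation argument and is fine, and the claim that $f$ has the SNP inside $D'[K(C)]$ is in fact true for \emph{every} vertex of $K(C)$ (this is Lemma~\ref{l.cycle.snp}); you should state and use that rather than the vague ``constrained enough''.

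The genuine gap is the last paragraph, the transfer of the SNP from $D'$ back to $D$. Your justification --- that the convenience condition ``supplies the second out-neighbour needed to absorb the loss of an arc'' --- misidentifies the mechanism. Convenience of an orientation $(a,b)$ constrains $N^{+}(v)\cup N^{++}(v)$ for vertices $v$ \emph{other than} $a,b$; it says nothing about $a$'s own second out-neighbourhood, which is precisely what changes when $f=a$. In the paper's argument (see the proof of Theorem~\ref{th_good_completion}, Case~2) the replacement witness in $D$ for a second out-neighbour reached in $D'$ through a new arc is produced from the \emph{losing relations} in $\Delta(D)$, not from convenience: if $f=a_t$ and $v\in N^{++}_{D'}(f)$ via $f\to b_t\to v$, one uses $a_tb_t\to a_{t+1}b_{t+1}$ to force $a_{t+1}\to v$ in $D$, yielding $f\to a_{t+1}\to v$; and if the intermediate arc $(x,v)$ is some other new arc, one uses a predecessor $rs\to xv$ in $\Delta$ to obtain $f\to s\to v$ in $D$. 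This costs at most one vertex ($b_{t+1}$), which is compensated by the drop of one in $|N^{+}|$. The endpoint case $f=a_k$ (no successor edge in the path) also needs its own treatment: the paper reorients $a_kb_k$ backwards and redoes the analysis. None of this is captured by your ``bookkeeping'' sentence, and without it the proof is incomplete.
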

\noindent {\bf Remark:} If $D$ is an oriented graph missing a matching, then the dependency digraph of $D$ is composed of vertex disjoint directed paths and directed cycles \cite{fidler2007remarks}.

Using same procedures for the proof of the previous theorem, we prove the following.
\begin{theorem} \label{th_good_completion}
Let $D$ be a digraph, and let $\Delta$ denote its dependency digraph. Suppose that for all $x \in V(D)$, we have (i) or (ii) where:\\
\textbf{(i)} $J(x)= K(P)$ for some directed path $P$ in $\Delta$.\\
\textbf{(ii)} $J(x)$ is an interval of $D$ such that there exists $  p \in J(x)$ and $p$ satisfies the SNP in $D[J(x)]$.\\
Then there is a good completion $D'$ of $D$ such that for all $f$ feed vertex of $D'$, there exists $ p \in J_{D'}(f)$ such that $p$ has the SNP in $D'$ and in $D$.
\end{theorem}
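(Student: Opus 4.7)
The plan is to adapt the good-completion construction used in the proof of Theorem~\ref{t.gazal} to the hypotheses at hand. First I would partition the non-whole vertices of $D$ according to which clause of the hypothesis they satisfy. For each $x$ satisfying (i), by hypothesis $J(x)=K(P)$ for a directed path $P=e_1\to e_2\to\cdots\to e_m$ in $\Delta$, so the interval component $\xi$ of $\mathcal{I}_D$ containing $x$ corresponds to $P$ alone. Since $e_1$ has in-degree zero in $\Delta$, the characterisation lemma from \cite{fidler2007remarks} tells me $e_1$ is a good missing edge; I would give it its convenient orientation. By the invariance property recorded right after the definition of a good missing edge, this preserves $N^+(v)$ and $N^{++}(v)$ for every vertex $v$ not an endpoint of $e_1$, so none of the dependencies among $e_2,\ldots,e_m$ is affected and $e_2$ becomes the new source of the remaining dependency digraph; iterating the procedure conveniently orients every edge of $P$. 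For $x$ satisfying (ii), I leave every missing edge inside $J(x)$ untouched. Call the resulting oriented graph $D'$.

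Next I would verify that $D'$ is a good completion. Every case-(i) vertex is whole in $D'$, so $J_{D'}(x)=\{x\}$ is trivially an interval of $D'$. For a case-(ii) component $\xi$, $K_D(\xi)$ is an interval of $D$ by hypothesis, and all arcs added during the construction have both endpoints inside case-(i) components, so none has an endpoint in $K_D(\xi)$; hence $K_{D'}(\xi)=K_D(\xi)$ remains an interval of $D'$. By the lemma of \cite{ghazal2015remark}, $D'$ admits a good weighted median order $L=x_1,\ldots,x_n$; set $f=x_n$. If $f$ lies in a case-(ii) component, then $J_{D'}(f)=J_D(f)$ and $D'[J_{D'}(f)]=D[J_D(f)]$ because no missing edge inside $J(f)$ was oriented; hypothesis (ii) furnishes $p\in J_{D'}(f)$ having the SNP in $D'[J_{D'}(f)]$, and Theorem~\ref{th1} promotes this to the SNP of $p$ in $D'$. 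Since $p$ is not incident to any arc introduced by the construction, $N^+_{D'}(p)=N^+_D(p)$ and $N^{++}_{D'}(p)=N^{++}_D(p)$, so $p$ also has the SNP in $D$.

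The remaining subcase is where $f$ lies in a case-(i) component, which forces $J_{D'}(f)=\{f\}$ and hence $p=f$; Theorem~\ref{th1} immediately gives $|N^+_{D'}(f)|\leq|N^{++}_{D'}(f)|$. The hard part will be transferring this inequality back to $D$, because $f$ sits on the directed path $P$ and several arcs incident to $f$ were added by the construction, so $N^+(f)$ and $N^{++}(f)$ genuinely differ between $D$ and $D'$. My plan is to reuse, essentially verbatim, the bookkeeping from the proof of Theorem~\ref{t.gazal}: for each convenient orientation $a\to b$ introduced along $P$, the defining property of the convenient orientation forces every new out-neighbor that the orientation creates for $f$ to be balanced by a vertex that the same orientation either introduces or preserves in $N^{++}_{D'}(f)$; tracking this balance edge by edge along $P$ shows that $|N^+_{D'}(f)|\leq|N^{++}_{D'}(f)|$ descends to $|N^+_D(f)|\leq|N^{++}_D(f)|$. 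This bookkeeping is the only nontrivial step; everything else is a direct adaptation of the tools summarised in Section~2.
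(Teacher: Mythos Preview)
Your approach is the paper's: orient each dependency path conveniently to obtain a good completion $D'$, apply Theorem~\ref{th1}, and then transfer the SNP back to $D$; your iterative use of the invariance property for case~(ii) is a valid shortcut for what the paper verifies directly.

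One correction to your case-(i) sketch: only \emph{one} new arc is incident to $f$ (namely the orientation of the unique missing edge $a_tb_t$ containing $f$), not several, and the balance you describe is not supplied by that same orientation. The paper's bookkeeping here splits into three subcases that you should follow rather than an edge-by-edge tally: if $f=b_t$ then $N^+_{D'}(f)=N^+_D(f)$ and it suffices to show $N^{++}_{D'}(f)\subseteq N^{++}_D(f)$; if $f=a_t$ with $t<k$ then $N^+$ grows by exactly $\{b_t\}$, and one proves $N^{++}_{D'}(f)\setminus\{b_{t+1}\}\subseteq N^{++}_D(f)$ using the losing relation $a_tb_t\to a_{t+1}b_{t+1}$; finally if $f=a_k$ there is no ``next'' edge to absorb the loss, so instead one \emph{reorients} $a_kb_k$ as $(b_k,a_k)$ and observes that $L$ remains a good median order of the resulting digraph $D''$, reducing to the $f=b_t$ pattern. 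Your plan does not anticipate this last reorientation step, which is the one ingredient not already implicit in the convenient-orientation machinery.
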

We will give the proof of Theorem \ref{th_good_completion} in section \ref{proof}.

\section{SSNC for tournaments missing two stars}
\label{main1}
For a non-negative integer $k$, a graph whose vertex-set $\{x, a_1, a_2,\ldots, a_k\}$ and whose edge-set $\{a_ix \colon i=1,\ldots,k\}$ is called a star of center $x$ and leaves $\{a_1, a_2,\ldots, a_k\}$ and is denoted by $S_x$. Two stars $S_x$ and $S_y$ with $x \neq y$ are said to be disjoint if they do not share a common vertex. Otherwise, they are said to be non-disjoint.\\
First, we introduce a particular order, obtained from a median order $L$ following a specific rearrangement. This new order, denoted by $Sed(L)$, is called the \emph{sedimentation} of $L$. In this section, we will use $Sed(L)$ to prove SSNC for tournaments missing two stars. Also, $Sed(L)$ is useful to exhibit at least two vertices with the SNP in subsection \ref{ssnc_leq2}.\\ 
Let $L$ be a good weighted median order of a good digraph $(D,\omega)$ and let $f$ denote its feed vertex. By Theorem \ref{th1}, for every $x \in J(f)$, we have $\omega(N^+(x)\setminus J(f)) \leq \omega(G_L \setminus J(f))$. Let $b_1,\ldots,b_r$ denote the bad vertices of $L$ not in $J(f)$ and $v_1,\ldots,v_s$ denote the non bad vertices of $L$ not in $J(f)$, both enumerated in increasing order with respect to their index in $L.$ If $\omega(N^+(f)\setminus J(f)) < \omega(G_L \setminus J(f))$, we set $Sed(L) = L.$ If $\omega(N^+(f) \setminus J(f)) = \omega(G_L \setminus J(f))$, we set $sed(L) = b_1\ldots b_rJ(f)v_1\ldots v_s$. 

\begin{lemma} \cite{ghazal2015remark}
Let $L$ be a good weighted median order of a good weighted digraph $(D,\omega)$. We have $Sed(L)$ is a good weighted median order of $(D,\omega).$
\end{lemma}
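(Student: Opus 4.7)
The plan is to split on the two cases in the definition of $Sed(L)$. In the strict inequality case $Sed(L) = L$, so nothing needs to be shown. The substantive case is when $\omega(N^+(f)\setminus J(f)) = \omega(G_L\setminus J(f))$ and $Sed(L) = b_1\ldots b_r\, J(f)\, v_1\ldots v_s$. There I must verify both that the $K(\xi)$'s still form intervals of $Sed(L)$ and that the forward arc weight is still maximal.

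For the interval property, my key claim is that for every component $\xi$ of $\mathcal{I}_D$ with $f \notin K(\xi)$, either every vertex of $K(\xi)$ is bad or every vertex of $K(\xi)$ is non-bad. Indeed, $K(\xi)$ is an interval of $D$, so $f$ (which lies outside $K(\xi)$) either dominates all of $K(\xi)$---in which case $K(\xi)\subseteq N^+(f)$ and every vertex is non-bad---or is dominated by all of $K(\xi)$. In the latter case, for any $u\in K(\xi)$, a witness $w$ with $f\to w\to u$ cannot lie inside $K(\xi)$ (both $f\to w$ and $w\to f$ would give a digon) and cannot come after $K(\xi)$ in $L$; by the interval property in $D$, the remaining condition ``$w$ lies strictly before the block $K(\xi)$ in $L$ with $f\to w$ and $w\to K(\xi)$'' does not depend on the choice of $u$, so the vertices of $K(\xi)$ are all good together or all bad together. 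Since each such $K(\xi)$ is homogeneously classified, and since the sedimentation preserves the relative $L$-order within the bad group and within the non-bad group and keeps $J(f)$ intact, every $K(\xi)$ remains a contiguous block of $Sed(L)$.

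For the median order property, since $L$ is already a median order the forward weight can only weakly decrease under any rearrangement, so it suffices to show that
\[
\Delta \;=\; \omega\bigl(Sed(L)^{+}\bigr) - \omega\bigl(L^{+}\bigr)
\]
equals zero. The only pairs whose relative order changes are (non-bad vertex, vertex of $J(f)$) and (non-bad $v$, bad $b$) with $v$ before $b$ in $L$. Grouping by interval blocks and using that between two distinct $K(\xi)$'s all arcs point the same way (interval property in $D$, no missing edges across), I would rewrite $\Delta$ as a sum indexed by the non-bad intervals $N$, of the contribution of $(N,J(f))$ plus the contributions from the ``out-of-order'' pairs $(B,N)$. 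The main obstacle will be organizing this sum so that the equality $\omega(N^+(f)\setminus J(f)) = \omega(G_L\setminus J(f))$---combined with the tightness of Theorem \ref{th1} applied inside $J(f)$ and with the median-order property forcing the forward direction between consecutive interval blocks---matches each ``Case A'' contribution (non-bad intervals inside $N^+(f)$) against a ``Sub-Case B1'' contribution (good non-bad intervals inside $N^-(f)$), producing the required cancellation $\Delta=0$. Once this is verified, $Sed(L)$ achieves the same forward weight as $L$ and is therefore a good weighted median order of $(D,\omega)$.
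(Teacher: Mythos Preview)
Note first that this lemma is cited from \cite{ghazal2015remark} and the present paper gives no proof of it, so there is no argument in the paper to compare against. Your treatment of the interval property is sound: since each $K(\xi)$ with $f\notin K(\xi)$ is an interval of $D$ and a block of $L$, it lies entirely in $N^+(f)$ or entirely in $N^-(f)$, and in the latter case any goodness witness for one of its vertices (necessarily lying before the whole block and outside it) serves for all of them, so the block is homogeneously bad or homogeneously non-bad. That is exactly what is needed to keep every $K(\xi)$ contiguous in $Sed(L)$.

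The median-order half has a real gap. You correctly list the two kinds of swapped pairs. For the pairs $(J(f),v)$ with $v$ non-bad, the contribution is $\omega(J(f))\cdot[\omega(N^+(f)\setminus J(f))-\omega(G_L\setminus J(f))]=0$, and this is precisely your ``Case~A versus Sub-Case~B1'' matching. But that matching says nothing about the $(b,v)$ swaps with $b$ bad, $v$ non-bad, $v<_L b$, which you list but then leave uncontrolled. Two further ingredients are needed. First, for every such pair with $v\in N^+(f)$ one has $b\to v$: they lie in different $K(\xi)$'s (hence are adjacent), and $v\to b$ would make $b$ good. Second, one needs the prefix inequality $\omega(N^+(f)\cap[1,j])\geq\omega(G_L\cap[1,j])$ for every $j$; this follows by applying the feed-vertex inequality (Theorem~\ref{th1}) to each suffix $L|_{[j+1,n]}$ and subtracting from the global equality. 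With these two facts, the swap contribution of each bad vertex $b$ is at least $\omega(N^+(f)\cap[1,p_b-1])-\omega(G_L\cap[1,p_b-1])\geq 0$, giving $\Delta\geq 0$ and hence $\Delta=0$. Your phrase ``the median-order property forcing the forward direction between consecutive interval blocks'' is not correct as stated---median orders allow backward arcs between consecutive blocks---and is not the mechanism that closes the argument; likewise, ``tightness of Theorem~\ref{th1} applied inside $J(f)$'' does not correspond to any step that is actually used.
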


\begin{theorem}
\label{disjointstars}
If $D$ is an oriented graph missing two stars $S_x$ and $S_y$, then $D$ satisfies SSNC.
\end{theorem}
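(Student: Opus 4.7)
The proof follows the completion-approach framework developed in the preceding section, with Theorem~\ref{th_good_completion} as the key engine. Write the two stars as $S_x$ with leaves $a_1,\ldots,a_k$ and $S_y$ with leaves $b_1,\ldots,b_\ell$, so that the missing edges of $D$ are the $xa_i$ and the $yb_j$, plus possibly the edge $xy$ in the non-disjoint subcase where $x$ is a leaf of $S_y$, and with possible identifications $a_i = b_j$ when the stars share a leaf. The first observation is that two missing edges sharing a vertex cannot be adjacent in $\Delta(D)$: a dependency would force $v\to v$ for the shared vertex $v$. Hence every arc of $\Delta$ runs between an $S_x$-missing edge and an $S_y$-missing edge, so $\Delta(D)$ is bipartite. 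Since all $S_x$-missing edges meet at $x$, they lie in the same component of the interval graph $\mathcal{I}_D$, and similarly for $S_y$, so $\mathcal{I}_D$ has at most two non-trivial components, which coalesce into one as soon as any arc of $\Delta$ exists between the two sides.

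My plan is to produce a good completion $D'$ of $D$ by orienting a carefully chosen subset of missing edges so that the resulting dependency digraph $\Delta(D')$ is a vertex-disjoint union of directed paths. I would achieve this by locating each cycle of $\Delta(D)$ (which is even and alternating between $S_x$- and $S_y$-edges by the bipartite structure) and breaking it through a \emph{convenient} orientation of one of its missing edges. The characterization of good missing edges as the in-degree-zero vertices of $\Delta$ lets me iteratively extract such a vertex and orient it conveniently, leaving the remaining dependency digraph structurally simpler without altering first or second out-neighborhoods of uninvolved vertices. After these orientations, the sets $K(\xi)$ become intervals of $D'$ because the shared centers $x$ and $y$ control the external adjacencies of the $a_i$'s and $b_j$'s, so $D'$ is a good digraph.

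Armed with $D'$, I would then invoke Theorem~\ref{th_good_completion}: for every vertex $v$, either $J(v) = \{v\}$ (if $v$ is a whole vertex of $D'$) or $J(v) = K(P)$ for some directed path $P$ of $\Delta(D')$, so hypothesis (i) of that theorem is satisfied globally. It follows that every feed vertex $f$ of a good median order of $D'$ yields some $p \in J_{D'}(f)$ with the SNP in $D'$, hence in $D$, as required. In components where the path structure collapses and hypothesis (i) cannot be arranged, I would instead apply hypothesis (ii), using the sedimentation $Sed(L)$ to extract a vertex with SNP inside the induced tournament $D'[K(\xi)]$ via Theorem~\ref{t.tourn}.

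The main obstacle I anticipate is showing that the cycle-breaking orientations can be chosen \emph{compatibly}: orienting one missing edge may create or destroy arcs in $\Delta$ elsewhere, and one must verify that the convenient-orientation condition is preserved throughout the procedure. This is most delicate in the non-disjoint subcases, where an extra missing edge $xy$ or shared leaves $a_i = b_j$ add arcs to $\Delta(D)$ that interact with the star centers and must be tracked carefully in the case analysis.
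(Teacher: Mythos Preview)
Your approach has a genuine gap at the point where you invoke Theorem~\ref{th_good_completion}. Even if you succeed in orienting enough missing edges so that $\Delta(D')$ is a disjoint union of directed paths, the set $J(v)$ is \emph{not} defined as $K(P)$ for the path $P$ containing a missing edge at $v$; rather, $J(v)=K(\xi)$ where $\xi$ is a connected component of the interval graph $\mathcal{I}_{D'}$, which merges all connected components of $\Delta(D')$ that share a vertex of $D'$. Since every remaining $S_x$-missing edge contains the center $x$, all of the paths of $\Delta(D')$ that involve any $xa_i$ are merged in $\mathcal{I}_{D'}$, so $J(x)$ is the union of several $K(P_i)$'s, not a single $K(P)$. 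Hypothesis~(i) of Theorem~\ref{th_good_completion} therefore fails.

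Your fallback to hypothesis~(ii) does not rescue the argument either. The claim that ``the sets $K(\xi)$ become intervals of $D'$ because the shared centers $x$ and $y$ control the external adjacencies of the $a_i$'s and $b_j$'s'' is simply false: for two leaves $a_1,a_2$ of $S_x$ and a whole vertex $w\notin K(\xi)$, nothing prevents $a_1\to w$ and $w\to a_2$, so $N^+(a_1)\setminus K(\xi)\neq N^+(a_2)\setminus K(\xi)$ in general. Orienting missing edges (all of which are incident to $x$ or $y$) cannot alter this. So $D'$ is not a good digraph, and neither hypothesis of Theorem~\ref{th_good_completion} is available.

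For comparison, the paper's proof takes a completely different route that sidesteps the good-digraph machinery. It first deletes the vertex $y$ (assuming $(y,x)\in E(D)$ without loss of generality), so that the remaining digraph misses only the single star $S_x$; there every missing edge is good and can be oriented conveniently to obtain a tournament $T$. One then takes a median order $L$ of $T$ that \emph{maximizes the index of $x$}, and carries out a direct case analysis on the feed vertex $f$: whether $(f,y)\in E(D)$ and whether $f\in S_x$. The maximality of the index of $x$, combined with the sedimentation $Sed(L)$, supplies the extra vertex in $N_D^{++}(f)$ needed to compensate for $y$ when it is re-inserted. The non-disjoint cases follow by the same template with minor modifications.
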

\begin{proof}
We will consider first the case when $S_x$ and $S_y$ are disjoint. Without loss of generality, we assume that $(y,x) \in E(D)$. Let $D'$ be the digraph obtained from $D$ by removing $y$. Clearly, the missing graph of $D'$ is $S_x$, and so all the missing edges are good.  Assign to each missing edge of $D'$ a convenient orientation. The obtained oriented graph is a tournament $T$. Let $L$ be a median order of $T$ that maximizes $\alpha$, the index of $x$ in $L$,  and let $f$ denote its feed vertex. Note that $T$ is a good digraph since $J(x)=\{x\}$ for every $x \in V(T)$. Furthermore, every median order $L$ of $T$ is a good median order and $Sed(L)$ is also a median order of $T$. By Theorem \ref{th1}, we have $|N_{T}^{+}(f)| \leq |G_L^T|$. In what follows, we will prove that $f$ satisfies the SNP in $D$. To this end, we consider the possible positions of the arc $(f,y)$.
\begin{case}
$(f,y) \notin E(D)$. Here we have two cases:
\begin{subcase}
{$f \notin S_x$.}\\
It is easy to see that $N^{+}_D(f)=N^{+}_T(f)$ and $G_L^T \subseteq N^{++}_D(f)$. Combining these two facts with the fact that  $|N_{T}^{+}(f)| \leq |G_L^T|$, we get that $f$ has the SNP in $D$.
\end{subcase}
\begin{subcase}
{$f \in S_x$.}\\
Reorient all the missing edges incident to $f$ towards $f$ (if any). Hence $L$ is a median order of the new tournament $T'$ and $|N_{T'}^{+}(f)| \leq |G_L^{T'}|$. Note that  $N^{+}_D(f)=N^{+}_{T'}(f)$ and $G_L^{T'} \subseteq N^{++}_D(f)$. All these together imply that $f$ has the SNP in $D$.
\end{subcase}
\end{case}
\begin{case}
{$(f,y) \in E(D)$.}\\
Note that $f \neq x$ and $f \notin S_y$. We proceed as above by considering the possible positions of the vertex $f$:
\begin{subcase}\label{s1}
{$f \notin S_x$.}\\
Observe that $N^{+}_D(f)=N^{+}_T(f) \cup \{y\}$, $G_L^T \subseteq N^{++}_D(f)$ and $f\rightarrow y \rightarrow x$ in $D$. If $x \in N^{+}_{T}(f) \cup G_{L}^{T}$ and   $|N_{T}^{+}(f)| =|G_L^T|$, then $sed(L)$ is a median order of $T$ in which the index of $x$ is greater than $\alpha$, a contradiction. This implies that  either $x \notin N^{+}_{T}(f) \cup G_{L}^{T}$ or  $|N_{T}^{+}(f)| <|G_L^T|$. In the former case, we have  $G_L^{T} \cup \{x\} \subseteq N^{++}_D(f)$ as $f\rightarrow y \rightarrow x$ in $D$, and so $|N_{D}^{+}(f)|= |N_{T}^{+}(f)| +1 \leq |G_L^T|+1 \leq |N_{D}^{++}(f)|$. In the latter case, we  have $|N_{D}^{+}(f)|= |N_{T}^{+}(f)| +1 \leq |G_L^T| \leq |N_{D}^{++}(f)|$.
\end{subcase}
\begin{subcase}\label{s2}
{$f \in S_x$.}\\
If $(x,f) \in E(T)$, then we proceed as in Subcase \ref{s1}. If $(f,x) \in E(T)$, we reorient the edge $xf$ towards $f$, then $L$ is a median order of the new tournament $T'$ and $|N_{T'}^{+}(f)| \leq |G_L^{T'}|$. Note that  $N^{+}_D(f)=N^{+}_{T'}(f) \cup \{y\}$, $G_L^{T'} \subseteq N^{++}_D(f)$ and $x  \notin N^{+}_{T'}(f)$. If $x  \in G_{L}^{T'}$ and $|N_{T'}^{+}(f)| =|G_L^{T'}|$, then $sed(L)$ is a median order of $T'$ in which the index of $x$ is greater than $\alpha$ and also greater than the index of $f$. The latter gives that $(x,f)$ is a backward arc (directed from right to left) in $T'$ with respect to $sed(L)$. Reassigning to the edge $xf$ its initial orientation, we get back to the tournament $T$ such that $sed(L)$ is a median order of $T$, a contradiction to the fact that $L$ maximizes the index of $x$. This implies that either $x \notin G_{L}^{T'}$ and hence  $G_L^{T'} \cup \{x\} \subseteq N^{++}_D(f)$ as $f\rightarrow y \rightarrow x$ in $D$, or  $|N_{T'}^{+}(f)| <|G_L^{T'}|$. In both cases, we get that $|N_{D}^{+}(f)| \leq |N_{D}^{++}(f)|$.   
\end{subcase}
\end{case}
This completes the proof of the case when $S_x$ and $S_y$ are disjoint.\vspace{3mm}\\
	\noindent Now we will study the case when $S_x$ and $S_y$ are non-disjoint.\vspace{3mm}\\
	\noindent We will suppose first that only the two centers are adjacent, that is $xy$ is a missing edge.  Assume without loss of generality that $(y,x)$ is a convenient orientation of the good missing edge $xy$ of $D$. The proof can be done by imitating the case when $S_x$ and $S_y$ are disjoint, with exactly two  differences. The  first difference is that $yx$ is a missing edge of $D$ whose convenient orientation is $(y,x)$. The second difference is that in Subcase \ref{s1} and Subcase \ref{s2}, in case that $x \notin N^{+}_{T}(f) \cup G_{L}^{T}$, we get that $G_L^{T} \cup \{x\} \subseteq N^{++}_D(f)$ because $xy$ is a good missing edge of $D$ and $(f,y) \in E(D)$.\vspace{3mm}\\
	\noindent To end the proof, it remains to confirm SSNC for the case when the set of the common vertices is a subset of the leaves of $S_x$ and $S_y$ or the centers $x$ and $y$. Indeed, this case can be proved by following the overall proof of the above two cases.
\end{proof}
\section{SSNC for tournaments missing disjoint paths}
\label{main2}
Let $D$ be an oriented graph missing disjoint paths and let $\Delta$ denote its dependency digraph.

\subsection{Double Cycles in $\Delta$}\label{double.cycle}

\begin{lemma}\label{lm1}
Let $D$ be a digraph missing disjoint paths. Let $ab$, $xy$, and $zt$ be three missing edges of $D$. If $ab \rightarrow xy$ and $ab \rightarrow zt$, then $\{{x,y}\}\cap\{{z,t}\}\neq\emptyset$. That is, $xy$ and $zt$ are adjacent.
\end{lemma}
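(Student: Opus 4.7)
My plan is to argue by contradiction: suppose $\{x,y\}\cap\{z,t\}=\emptyset$, so that $x,y,z,t$ are pairwise distinct (a quick check, using that $ab$ is missing, also confirms that $a,b$ are distinct from all of $x,y,z,t$). I will then unpack the two dependency relations. From $ab\to xy$ I read off $a\to x$, $b\to y$, $y\notin N^+(a)\cup N^{++}(a)$ and $x\notin N^+(b)\cup N^{++}(b)$; symmetrically, from $ab\to zt$ I get $a\to z$, $b\to t$, $t\notin N^+(a)\cup N^{++}(a)$ and $z\notin N^+(b)\cup N^{++}(b)$.

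The crux is to show that \emph{both} $yz$ and $xt$ are themselves missing edges of $D$. For the pair $y,z$: an arc $y\to z$ would combine with $b\to y$ to force $z\in N^{++}(b)$, contradicting $z\notin N^{++}(b)$; an arc $z\to y$ would combine with $a\to z$ to force $y\in N^{++}(a)$, contradicting $y\notin N^{++}(a)$. So neither arc exists between $y$ and $z$, and $yz$ is missing. The argument for $xt$ is entirely parallel, using $b\to t$, $a\to x$, and the non-membership conditions involving $x$ and $t$.

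Now the four missing edges $xy$, $yz$, $zt$, $tx$ form a $4$-cycle on $\{x,y,z,t\}$ inside the missing graph of $D$. But by hypothesis the missing graph is a disjoint union of paths, which is acyclic. This contradiction shows that $\{x,y\}\cap\{z,t\}\neq\emptyset$, so $xy$ and $zt$ are adjacent as required.

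I do not expect a serious obstacle here; the only thing one has to be careful with is bookkeeping the correspondence $a\leftrightarrow x$, $b\leftrightarrow y$ (respectively $a\leftrightarrow z$, $b\leftrightarrow t$) dictated by the definition of the dependency relation, so that the induced missing edges are identified as $yz$ and $xt$ rather than $yt$ and $xz$.
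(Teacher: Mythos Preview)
Your proof is correct and uses the same two ingredients as the paper's argument: the dependency relations force $yz$ and $xt$ to be non-arcs, while the path structure of the missing graph forbids the resulting $4$-cycle. The paper merely runs the contradiction in the opposite order---it first invokes the path hypothesis to conclude that at least one of $yz$, $xt$ is \emph{not} missing (say $yz$), and then derives the contradiction from $b\to y\to z$ or $a\to z\to y$---so the two presentations are essentially the same.
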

\begin{proof}
Since $ab \rightarrow xy$, we have ${a\rightarrow x}$ and ${b\rightarrow y}$ where $x \notin b^+\cup b^{++}$
and $y \notin a^+\cup a^{++}$. Also ${a\rightarrow z}$ and ${b\rightarrow t}$ where $z \notin b^+\cup b^{++}$ and $t \notin a^+\cup a^{++}$ since $ab \rightarrow zt$. Suppose that $\{x,y\}\cap\{z,t\}=\emptyset$. It follows that $xt$ or $yz$ is not a missing edge. We may suppose that $yz$ is not a missing edge. If $y\rightarrow z$, then $ b\rightarrow y \rightarrow z$ and hence $z\in b^{++}$, a contradiction. If $z\rightarrow y$, then $ a\rightarrow z \rightarrow y$ and hence $y\in a^{++}$, a contradiction. Thus $\{{x,y}\}\cap\{{z,t}\}\neq\emptyset$.
\end{proof}

\begin{lemma}\label{lm2}
Let $D$ be a digraph missing disjoint paths. Let $ab$, $xy$, and $zt$ be three missing edges of $D$. If $xy \rightarrow ab$ and $zt \rightarrow ab$, then $\{{x,y}\}\cap\{{z,t}\}\neq\emptyset.$ That is, $xy$ and $zt$ are adjacent.
\end{lemma}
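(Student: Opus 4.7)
The plan is to mirror the argument of Lemma \ref{lm1} almost verbatim, since the hypothesis here is the ``dual'' configuration (two missing edges both losing to $ab$, rather than $ab$ losing to two missing edges). I would proceed by contradiction, assume $\{x,y\}\cap\{z,t\}=\emptyset$, and derive that some putative edge among the $x,y,z,t$ crowd cannot actually be missing, then eliminate each of the two possible orientations of that edge.

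More concretely, I would first unpack the two hypotheses $xy\rightarrow ab$ and $zt\rightarrow ab$ into the four arcs and four non-membership conditions: $x\rightarrow a$, $y\rightarrow b$, $z\rightarrow a$, $t\rightarrow b$, together with $a\notin y^+\cup y^{++}$, $b\notin x^+\cup x^{++}$, $a\notin t^+\cup t^{++}$, $b\notin z^+\cup z^{++}$. Then I would invoke the structural hypothesis that the missing graph of $D$ is a disjoint union of paths: if all of $xy$, $zt$, $xt$, $yz$ were missing edges, the four vertices $x,y,z,t$ (which are distinct by our assumption) would induce a $4$-cycle in the missing graph, contradicting the fact that a disjoint union of paths is acyclic. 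Hence at least one of $xt$, $yz$ is not a missing edge; by the symmetry $(x,y)\leftrightarrow(z,t)$ of the statement I may assume without loss of generality that $yz$ is not missing, so there is an arc between $y$ and $z$ in $D$.

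Finally, I would split on the direction of this arc. If $z\rightarrow y$, then combining with $y\rightarrow b$ yields $b\in z^{++}$, contradicting $b\notin z^+\cup z^{++}$. If $y\rightarrow z$, then combining with $z\rightarrow a$ yields $a\in y^{++}$, contradicting $a\notin y^+\cup y^{++}$. Either way we reach a contradiction, so $\{x,y\}\cap\{z,t\}\neq\emptyset$, as required.

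There is no real obstacle here; this is a short structural argument. The only thing to be slightly careful about is recording the correct non-membership conditions (they come out ``swapped'' compared to Lemma \ref{lm1} because now $xy$ and $zt$ are the tails of the dependency arcs rather than the head), and invoking the ``disjoint paths'' hypothesis at the right place to forbid the $4$-cycle $x\text{-}y\text{-}z\text{-}t\text{-}x$ in the missing graph. If one wanted to avoid even this mild asymmetry, one could alternatively handle the case ``$xt$ is not missing'' explicitly and note that the same two contradictions arise with $x$, $t$ in place of $y$, $z$.
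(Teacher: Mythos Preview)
Your proof is correct and follows essentially the same approach as the paper's own proof: unpack the two losing relations, assume disjointness, use the disjoint-paths hypothesis to conclude that $yz$ (or $xt$) is not missing, and derive a contradiction from either orientation. The only difference is cosmetic---you spell out the $4$-cycle justification and the symmetry reduction explicitly, whereas the paper states ``it follows that $xt$ or $yz$ is not a missing edge'' without further comment.
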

\begin{proof}
Since $xy \rightarrow ab$, we have ${x\rightarrow a}$ and ${y\rightarrow b}$ where $a \notin y^+\cup y^{++}$
and $b \notin x^+\cup x^{++}$. Similarly, since $zt \rightarrow ab$, we have ${z\rightarrow a}$ and ${t\rightarrow b}$ where $a \notin t^+\cup t^{++}$ and $b \notin z^+\cup z^{++}$. Suppose that $\{x,y\}\cap\{z,t\}=\emptyset$. It follows that $xt$ or $yz$ is not a missing edge. We may suppose that $yz$ is not a missing edge. If $y\rightarrow z$, then $ y\rightarrow z \rightarrow a$ and hence $a\in y^{++}$, a contradiction. If $z\rightarrow y$, then $ z\rightarrow y \rightarrow b$ and hence $b\in z^{++}$, a contradiction. Thus, $\{{x,y}\}\cap\{{z,t}\}\neq\emptyset$
\end{proof}

\begin{lemma}\label{lm3}
Let $D$ be a digraph missing disjoint paths and let $\Delta$ denote its dependency digraph. Let $abc$ and $xyz$ be two disjoint missing paths of $D$ of length 2 such that $d^+_{\Delta}(ab)=d^+_{\Delta}(bc)=2$ and $d^-_{\Delta}(xy)=d^-_{\Delta}(yz)=2$. If $ab\rightarrow xy$, then $ab\rightarrow yz$ and $bc\rightarrow xy$ as well as $bc\rightarrow yz$.
\end{lemma}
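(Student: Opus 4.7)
The plan is to derive each of the three target arcs by combining the degree hypotheses with one application of Lemma~\ref{lm1} or Lemma~\ref{lm2}, together with a single structural observation about the missing graph. That observation is: since the missing graph of $D$ is a disjoint union of paths and $abc$ is one such path of length $2$, the only missing edge of $D$ sharing a vertex with $ab$ is $bc$ (at the interior vertex $b$); symmetrically, the only missing edge sharing a vertex with $xy$ is $yz$.

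From here I would first prove $bc\to xy$: because $d^{-}_{\Delta}(xy)=2$, the edge $xy$ has exactly two in-neighbors in $\Delta$; one is $ab$ by hypothesis, and by Lemma~\ref{lm2} the second in-neighbor must share a vertex with $ab$, so the observation forces it to be $bc$. Next, to obtain $ab\to yz$, I would use $d^{+}_{\Delta}(ab)=2$: by Lemma~\ref{lm1} the two out-neighbors of $ab$ share a common vertex, and since one of them is $xy$, the other must share a vertex with $xy$, which the observation identifies as $yz$. The same argument applied to $bc$, now using the just-established arc $bc\to xy$ to identify $xy$ as one of its out-neighbors, yields $bc\to yz$.

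The main obstacle will be rigorously justifying the structural observation from the phrasing ``disjoint missing paths of length $2$'': one must ensure that the endpoints $a,c,x,z$ do not lie on any longer missing path of $D$, so that the missing edges meeting $ab$ and $xy$ inside the missing graph are indeed only $bc$ and $yz$ respectively. Without this, the second out-neighbor of $ab$ guaranteed by $d^{+}_{\Delta}(ab)=2$ could in principle be a missing edge through a further neighbor of $x$, and the second in-neighbor of $xy$ could come from a further neighbor of $a$; the argument must close off these alternatives before applying Lemmas~\ref{lm1}--\ref{lm2} as above.
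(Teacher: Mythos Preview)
Your argument is correct and follows the same approach as the paper: apply Lemma~\ref{lm1} to the degree-$2$ out-neighborhoods and Lemma~\ref{lm2} to the degree-$2$ in-neighborhoods, then use the structure of the missing graph to identify the forced neighbor. The paper's proof is essentially a two-line compression of your outline (it happens to derive $ab\to yz$ before $bc\to xy$, but that ordering is immaterial).

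Regarding the ``main obstacle'' you flag: it is not actually an obstacle. The hypothesis ``$D$ is a digraph missing disjoint paths'' means the missing graph is a disjoint union of paths, and ``$abc$ and $xyz$ are two disjoint missing paths of $D$ of length~$2$'' means these are two of those component paths. Hence $a,c$ (resp.\ $x,z$) have degree~$1$ in the missing graph, so no further missing edge meets $ab$ or $xy$ beyond $bc$ and $yz$; your structural observation is immediate from the standing hypotheses and need not be argued separately.
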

\begin{proof}
We have $ab\rightarrow xy$ and $d^+(ab)=2$, so $ab\rightarrow yz$ by Lemma \ref{lm1}. But $d^-(xy)=2$ and $ab\rightarrow xy$. Hence $bc\rightarrow xy$ by Lemma \ref{lm2}. By the same justification, we get $bc\rightarrow yz$.
\end{proof}

We may write $ab \rightarrow xyz$ when $ab \rightarrow xy$ and $ab \rightarrow yz$. Also we write $abc \rightarrow xyz$  when $ab \rightarrow xyz$ and $bc \rightarrow xyz$.

\begin{proposition}
\label{p.in-out_leq_2}
If $D$ is a digraph missing disjoint paths, then the maximum out-degree and the maximum in-degree in $\Delta(D)$ are at most 2.
\end{proposition}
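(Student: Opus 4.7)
The plan is to argue by contradiction using Lemmas \ref{lm1} and \ref{lm2}, exploiting the fact that the missing graph, being a disjoint union of paths, has maximum degree at most $2$ and contains no triangle.

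First I would fix a missing edge $ab$ of $D$ and suppose, for contradiction, that $d_\Delta^+(ab)\ge 3$, so that there exist three distinct missing edges $e_1,e_2,e_3$ with $ab\to e_i$ for $i=1,2,3$. Applying Lemma \ref{lm1} to each pair, I would conclude that the three edges $e_1,e_2,e_3$ are pairwise adjacent in the missing graph, i.e.\ each two of them share at least one endpoint.

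Next I would show that three pairwise adjacent edges cannot exist in a disjoint union of paths. There are only two combinatorial possibilities for such a triple. Either the three edges share a common vertex $v$, in which case $v$ has degree at least $3$ in the missing graph, contradicting that every vertex of a disjoint union of paths has degree at most $2$; or the three pairwise intersections are three distinct vertices, and then $e_1,e_2,e_3$ form a triangle on those three vertices, contradicting the fact that paths contain no cycles. Either way we reach a contradiction, giving $d_\Delta^+(ab)\le 2$.

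Finally, for the in-degree bound I would repeat the same argument verbatim, replacing Lemma \ref{lm1} by Lemma \ref{lm2}: three distinct missing edges $e_1,e_2,e_3$ with $e_i\to ab$ must again be pairwise adjacent, and the same case analysis on a disjoint union of paths yields a contradiction. The only real step is the small graph-theoretic observation about pairwise adjacent edges in a path forest; once that is in place, the proposition follows immediately from the two lemmas. I expect no significant obstacle.
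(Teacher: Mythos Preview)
Your proposal is correct and follows essentially the same approach as the paper: both argue by contradiction, apply Lemmas \ref{lm1} and \ref{lm2} to force pairwise adjacency among three out- (resp.\ in-) neighbors, and then observe this is impossible in a path forest. The only difference is cosmetic: the paper simply writes ``a contradiction'' at the pairwise-adjacency step, whereas you spell out the two cases (common vertex forcing degree $\ge 3$, or a triangle) explicitly.
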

\begin{proof}
Let $e$ be a missing edge of $D$. Suppose that $e \rightarrow e_1$ and $e \rightarrow e_2$ where $e_1$ and $e_2$ are two missing edges of $D$. Hence, $e_1$ and $e_2$ are adjacent by Lemma \ref{lm1}. If $e \rightarrow e_3$, then $e_3$ must be adjacent to $e_1$ and $e_2$, a contradiction. Thus, the maximum out-degree is at most 2.\\
By the same justification and by using Lemma \ref{lm2}, we get that the maximum in-degree in $\Delta$ is at most 2.
\end{proof}

\begin{definition}
Let $\{a_ib_ic_i\colon i=1,\ldots,k\}$ be a set of disjoint missing paths of a digraph $D$. We say that $C=a_1b_1c_1, \ldots, a_kb_kc_k$ is a \emph{double cycle} in $\Delta(D)$ if $a_1b_1c_1 \rightarrow a_2b_2c_2 \rightarrow \cdots  \rightarrow a_kb_kc_k \rightarrow a_1b_1c_1$.
\end{definition}

\subsection{Tournament missing disjoint paths of length 2}\label{t.p}
Let $D$ be a tournament missing disjoint paths of length 2 and let $\Delta$ denote its dependency digraph. Suppose that $\Delta$ is 2-regular. That is, for every missing edge $ab$ in  $\Delta$, we have $d^+_{\Delta}(ab)=d^-_{\Delta}(ab)=2$. By the previous section, we get that $\Delta$ is composed only of {double cycles}. In this particular case, we prove that {SSNC} holds. Let $C=a_1b_1c_1, \ldots, a_kb_kc_k$ be a double cycle in $\Delta$. Set $[1,k]=\{1,\ldots,k\}$.

\subsubsection{SSNC in $D[K(C)]$}
Throughout this subsection, the subscripts are taken modulo $k$, and a subscript 0 is considered to be $k$.  For $x \in K(C)$, we may write $x^+, x^-,$ and $x^{++}$ instead of $N_{D[K(C)]}^+(x), N_{D[K(C)]}^-(x),$ and $N_{D[K(C)]}^{++}(x)$ respectively.
\begin{lemma}
\label{lm4}
Let $D$ be a tournament missing disjoint paths of length 2. Let $C=a_1b_1c_1, \ldots, a_kb_kc_k$ be a double cycle in $\Delta(D)$. Set $\{x_i,y_i\} = \{a_i,b_i\}$ or $\{x_i,y_i\} = \{b_i,c_i\}$ for all $i\in [1,k]$. Let $j\in [1,k]$. For all $i\in [1,k]-\{j\}$, we have:
\begin{enumerate}
\item If $x_j \rightarrow x_i$, then $y_i \rightarrow x_j$.
\item If $x_j \rightarrow y_i$, then $x_i \rightarrow x_j$.
\item If $y_j \rightarrow x_i$, then $y_i \rightarrow y_j$.
\item If $y_j \rightarrow y_i$, then $x_i \rightarrow y_j$.
\end{enumerate}
\end{lemma}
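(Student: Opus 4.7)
The first observation of my plan is that parts (2) and (4) of the lemma are simply contrapositives of (1) and (3), respectively: since $i \neq j$ and the missing paths are vertex-disjoint, each of the pairs $\{x_j,y_i\}$, $\{x_j,x_i\}$, $\{y_j,y_i\}$, $\{y_j,x_i\}$ is a non-missing pair in the tournament $D$, so exactly one of ``$u \to v$'' or ``$v \to u$'' holds. Consequently the negation of ``$y_i \to x_j$'' is precisely ``$x_j \to y_i$'', and so on. This reduces the four stated claims to proving (1) and (3).

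For (1), I would argue by contradiction, assuming both $x_j \to x_i$ and $x_j \to y_i$ hold simultaneously. The central calculation is that this assumption blocks every candidate bijection that could witness $x_jy_j \to x_iy_i$ in $\Delta$: the bijection $x_j \mapsto x_i,\ y_j \mapsto y_i$ requires $y_i \notin x_j^+ \cup x_j^{++}$, which is violated by $x_j \to y_i$; the bijection $x_j \mapsto y_i,\ y_j \mapsto x_i$ requires $x_i \notin x_j^+ \cup x_j^{++}$, violated by $x_j \to x_i$. A mirror computation rules out $x_iy_i \to x_jy_j$ as well. Hence $x_jy_j$ and $x_iy_i$ are non-adjacent in the dependency digraph $\Delta$. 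For $i \in \{j-1, j+1\}$ this is an immediate contradiction, because the double-cycle relation $a_\ell b_\ell c_\ell \to a_{\ell+1} b_{\ell+1} c_{\ell+1}$ unpacks (by the definitions preceding Lemma \ref{lm3}) to the $\Delta$-edge $x_jy_j \to x_iy_i$ when $i = j+1$ and to $x_iy_i \to x_jy_j$ when $i = j-1$.

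The main obstacle I anticipate is the case when $i$ lies at cyclic distance at least two from $j$, which occurs when $k \geq 4$ and for which no direct $\Delta$-edge between $x_iy_i$ and $x_jy_j$ is forced by the cycle. My plan here is to exploit the rigid arc pattern between consecutive paths that emerges from solving the four losses $a_\ell b_\ell \to a_{\ell+1} b_{\ell+1}$, $a_\ell b_\ell \to b_{\ell+1} c_{\ell+1}$, $b_\ell c_\ell \to a_{\ell+1} b_{\ell+1}$, $b_\ell c_\ell \to b_{\ell+1} c_{\ell+1}$ simultaneously: a short case analysis shows that only two mutually consistent ``combos'' of bijections exist, and each combo determines all nine arcs between path $\ell$ and path $\ell+1$. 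In either combo, every vertex of a missing edge of path $i$ sends an out-arc into a specific vertex $w$ of path $j+1$ that the losing conditions on $x_jy_j$ explicitly forbid from $x_j^+ \cup x_j^{++}$; combining this with the assumption $x_j \to x_i$ (or $x_j \to y_i$) yields the 2-path $x_j \to x_i \to w$ (or $x_j \to y_i \to w$), placing $w$ into $x_j^{++}$ and giving the contradiction. Part (3) follows by the identical scheme with $y_j$ in place of $x_j$, invoking the losing conditions on $y_j^+ \cup y_j^{++}$ instead.
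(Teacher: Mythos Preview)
Your reduction of (2) and (4) to (1) and (3) via contrapositives is correct and parallels the paper's symmetry remarks. Your handling of the adjacent case $i \in \{j-1,j+1\}$ is also sound: under the assumption $x_j \to x_i$ and $x_j \to y_i$, both bijections that could witness a $\Delta$-arc between $x_jy_j$ and $x_iy_i$ are blocked, contradicting the double-cycle relations.

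The gap is in the non-adjacent case. Your combo analysis is correct and does pin down all nine arcs between two \emph{consecutive} triples $a_\ell b_\ell c_\ell$ and $a_{\ell+1}b_{\ell+1}c_{\ell+1}$, but it says nothing about the arcs between path~$i$ and path~$j{+}1$ when those triples are not consecutive. The assertion that ``every vertex of a missing edge of path~$i$ sends an out-arc into a specific vertex $w$ of path~$j{+}1$'' is precisely the sort of cross-path information the lemma is trying to establish; you give no mechanism for propagating arc information from the local combos across the intervening paths $j{+}2,\dots,i{-}1$, so as written the argument does not close.

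The paper fills this gap by induction on $i$ (taking $j=1$): the base case $i=2$ is your adjacent case, and for the step one fixes the bijection in $x_iy_i \to x_{i+1}y_{i+1}$, say $x_i\to x_{i+1}$ and $y_i\to y_{i+1}$, and shows that $x_1\to x_{i+1}$ forces first $x_i\to x_1$ (else $x_1\to x_i$ gives $y_i\to x_1$ by induction, whence $y_i\to x_1\to x_{i+1}$ contradicts $x_{i+1}\notin y_i^+\cup y_i^{++}$) and then $y_{i+1}\to x_1$ (else $x_i\to x_1\to y_{i+1}$ contradicts $y_{i+1}\notin x_i^+\cup x_i^{++}$). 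Your setup can be salvaged by running essentially the same induction \emph{downwards}: from $x_j\to x_i$ and $x_j\to y_i$, the losing relation $x_{i-1}y_{i-1}\to x_iy_i$ forces $x_j\to x_{i-1}$ and $x_j\to y_{i-1}$, and iterating to $i=j{+}1$ recovers the adjacent-case contradiction. But some such chaining step is unavoidable, and it is absent from your sketch.
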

\begin{proof}
It is sufficient to prove it for $j=1$ as there is no loss of generality.\\
(1) The proof is by induction on $i$. For $i=2$, the statement is true by the definition of losing relations. Suppose that it is true for $i\geq 2$ and let us prove it for $i+1$ (for $i=k$, we take $i+1$ modulo $k$). So let  $x_1 \rightarrow x_{i+1}$ and we will show that $y_{i+1} \rightarrow x_1$.\\
We have $x_iy_i\rightarrow x_{i+1}y_{i+1}$. We may suppose that $ x_i\rightarrow x_{i+1}$ and $y_i\rightarrow y_{i+1}$. We can easily show that $x_i \rightarrow x_1$. On the contrary, suppose that $x_i \nrightarrow x_1$; this means that $x_1 \rightarrow x_i$  since $x_i$ and $x_1$ are adjacent. Hence $y_i \rightarrow x_1$ by the induction hypothesis. So $y_i\rightarrow x_1 \rightarrow x_{i+1}$, and hence $x_{i+1} \in y_i^{++}$, which contradicts the losing relation $x_iy_i\rightarrow x_{i+1}y_{i+1}$. Thus, $x_i \rightarrow x_1$. If $x_1 \rightarrow y_{i+1}$, then $x_i \rightarrow x_1 \rightarrow y_{i+1}$. Hence $y_{i+1} \in x_i^{++}$, which contradicts the losing relation $x_iy_i\rightarrow x_{i+1}y_{i+1}$. Therefore, $y_{i+1} \rightarrow x_1$.\\
(2) The proof is done by switching $x_i$ and $y_i$.\\
(3) and (4) We replace $x_1$ by $y_1$.
\end{proof}

\begin{corollary}
\label{c.relation}
Let $D$ be a tournament missing disjoint paths of length 2. Let $C=a_1b_1c_1, \ldots, a_kb_kc_k$ be a double cycle in $\Delta(D)$. Set $\{x_i,y_i\} = \{a_i,b_i\}$ or $\{x_i,y_i\} = \{b_i,c_i\}$ for all $i\in [1,k]$. Let $j\in [1,k]$. For all $i\in [1,k]-\{j\}$, we have:
\begin{enumerate}
\item $x_j \rightarrow x_i$ if and only if $y_i \rightarrow x_j$.
\item $x_j \rightarrow y_i$ if and only if $x_i \rightarrow x_j$.
\item $y_j \rightarrow x_i$ if and only if $y_i \rightarrow y_j$.
\item $y_j \rightarrow y_i$ if and only if $x_i \rightarrow y_j$.
\end{enumerate}
\end{corollary}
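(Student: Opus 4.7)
My plan is to derive the four biconditionals by chaining the one-directional implications of Lemma~\ref{lm4} into a cycle of length four, so that closing the cycle forces equivalence of its four statements. The forward direction of each biconditional in the corollary is already supplied by Lemma~\ref{lm4}, so only the converses require work.

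The key observation will be that Lemma~\ref{lm4} is symmetric in the choice of the ``base'' index $j$: its hypothesis requires only that $i$ and $j$ be distinct indices in $[1, k]$, so each of its four implications may be re-applied with $i$ and $j$ interchanged. Using this, I will chain: from $x_j \rightarrow x_i$, part~(1) gives $y_i \rightarrow x_j$; then part~(3) with $i$ and $j$ swapped gives $y_j \rightarrow y_i$; then part~(4) gives $x_i \rightarrow y_j$; and finally part~(2) with $i$ and $j$ swapped returns $x_j \rightarrow x_i$. Because this cycle closes, all four intermediate statements must be equivalent:
\[
x_j \rightarrow x_i \;\Longleftrightarrow\; y_i \rightarrow x_j \;\Longleftrightarrow\; y_j \rightarrow y_i \;\Longleftrightarrow\; x_i \rightarrow y_j,
\]
which is exactly items~(1) and~(4) of the corollary. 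A parallel cycle beginning from $x_i \rightarrow x_j$ and running through $y_j \rightarrow x_i$, $y_i \rightarrow y_j$, and $x_j \rightarrow y_i$ will, by the same mechanism, give items~(2) and~(3).

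I do not expect a substantive obstacle here: the argument is essentially bookkeeping about which index plays which role at each step, and it uses no further structural information beyond Lemma~\ref{lm4} itself. The only point that must be checked carefully is that swapping $i$ and $j$ in the hypothesis of Lemma~\ref{lm4} is legitimate --- which it is, since the lemma quantifies uniformly over all pairs of distinct indices in $[1, k]$, placing no asymmetric restriction on which of the two plays the role of the base index.
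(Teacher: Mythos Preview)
Your proposal is correct and takes essentially the same approach as the paper: both arguments chain the four one-directional implications of Lemma~\ref{lm4} (with the roles of $i$ and $j$ swapped where needed) into a closed cycle, forcing the equivalences. Your presentation is slightly tidier in that you explicitly note the single cycle yields items~(1) and~(4) simultaneously (and the parallel cycle yields~(2) and~(3)), whereas the paper only spells out~(1) and leaves the rest to ``the same reasoning,'' but the underlying argument is identical.
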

\begin{proof}
(1) By Lemma \ref{lm4}, if $x_j \rightarrow x_i$, then $y_i \rightarrow x_j$. Conversely,  if $y_i \rightarrow x_j$, then $y_j \rightarrow y_i$, hence $x_i \rightarrow y_j$, and finally $x_j \rightarrow x_i$.\\
(2), (3) and (4) We apply the same reasoning.
\end{proof}

\indent In view of what precedes, we obtain the following conclusion:\\
\textbf{Conclusion}: Let $x \in K(C)$ and $uv$ be a missing edge in $C$ such that $xu$ and $xv$ are not missing edges. We have $x \rightarrow u$ if and only if $x\leftarrow v$.

\begin{lemma}
\label{lm5}
Let $D$ be a tournament missing disjoint paths of length 2. Let $C=a_1b_1c_1, \ldots, a_kb_kc_k$ be a double cycle in $\Delta(D)$. For all $i\in [1,k]$, we have:
\begin{enumerate}
  \item $N^+_{D[K(C)]}(a_i) \setminus \{{c_i}\}=N^+_{D[K(C)]}(c_i) \setminus \{{a_i}\}$.
  \item $N^-_{D[K(C)]}(a_i) \setminus \{{c_i}\}=N^-_{D[K(C)]}(c_i) \setminus \{{a_i}\}$.
\end{enumerate}
\end{lemma}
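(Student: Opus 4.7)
My plan is to deduce both equalities directly from the Conclusion following Corollary \ref{c.relation}. The starting observation is that, since $a_ib_ic_i$ is a missing path of length exactly $2$, only $a_ib_i$ and $b_ic_i$ are missing (while $a_ic_i$ is an actual arc of $D$), and these two missing edges share the common endpoint $b_i$; this makes $b_i$ the natural ``pivot'' that will witness the equivalence of the neighborhoods of $a_i$ and of $c_i$.

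I would then fix an arbitrary $v \in K(C) \setminus \{a_i, b_i, c_i\}$ and compare how $v$ interacts with $a_i$ and with $c_i$. Because the missing paths of $D$ are pairwise vertex-disjoint and $v$ lies on some other missing path $a_jb_jc_j$, none of $va_i$, $vb_i$, $vc_i$ is a missing edge, so the Conclusion applies to both $a_ib_i$ and $b_ic_i$ with this choice of $v$. Applying it to $a_ib_i$ yields the biconditionals $v \to a_i \Leftrightarrow b_i \to v$ and $a_i \to v \Leftrightarrow v \to b_i$; applying it to $b_ic_i$ similarly yields $v \to c_i \Leftrightarrow b_i \to v$ and $c_i \to v \Leftrightarrow v \to b_i$. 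Chaining these four equivalences through the pivot $b_i$ gives $v \to a_i \Leftrightarrow v \to c_i$ and $a_i \to v \Leftrightarrow c_i \to v$.

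To close, I would observe that $b_i$ is not adjacent in $D$ to either $a_i$ or $c_i$, so $b_i$ belongs to none of $N^{+}_{D[K(C)]}(a_i)$, $N^{-}_{D[K(C)]}(a_i)$, $N^{+}_{D[K(C)]}(c_i)$, $N^{-}_{D[K(C)]}(c_i)$; the only vertices on which these neighborhoods can possibly disagree are therefore $a_i$ and $c_i$ themselves, which is exactly the statement of the lemma. The argument is really just a clean chain of equivalences delivered by the Conclusion, so there is no serious obstacle; the only point requiring care is keeping straight which endpoint of each missing edge plays the role of $u$ and which plays the role of $v$ in that Conclusion, so that the direction of the resulting arcs lines up correctly when the two biconditionals coming from $a_ib_i$ and $b_ic_i$ are composed.
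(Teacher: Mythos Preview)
Your proposal is correct and takes essentially the same approach as the paper: both chain the equivalence through the pivot $b_i$ via the two missing edges $a_ib_i$ and $b_ic_i$, using Corollary~\ref{c.relation} (you invoke its restated Conclusion, the paper invokes the corollary directly). The only cosmetic difference is that the paper splits the generic vertex into the three cases $a_j$, $b_j$, $c_j$, whereas you handle all of them at once as a single $v \in K(C)\setminus\{a_i,b_i,c_i\}$.
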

\begin{proof}
(1) Let $j \in [1,k] \setminus \{i\}$. By Corollary \ref{c.relation}, we have $a_i \rightarrow a_j$ if and only if $a_j \rightarrow b_i$, that is, if and only if $c_i \rightarrow a_j$. Similarly, we have $a_i \rightarrow b_j$ if and only if $c_i \rightarrow b_j$, and also $a_i \rightarrow c_j$ if and only if $c_i \rightarrow c_j$. It follows that $N^+_{D[K(C)]}(a_i) \setminus \{{c_i}\}=N^+_{D[K(C)]}(c_i) \setminus \{{a_i}\}$.\\
(2) Likewise, by Corollary \ref{c.relation}, we get $N^-_{D[K(C)]}(a_i) \setminus \{{c_i}\}=N^-_{D[K(C)]}(c_i) \setminus \{{a_i}\}$.
\end{proof}

\begin{lemma}
\label{lm++}
Let $D$ be a tournament missing disjoint paths of length 2. Let $C=a_1b_1c_1, \ldots, a_kb_kc_k$ be a double cycle in $\Delta(D)$. For all $i\in [1,k]$, we have:
\begin{enumerate}
  \item $N^{++}_{D[K(C)]}(a_i)=N^{++}_{D[K(C)]}(c_i)$.
  \item $N^{--}_{D[K(C)]}(a_i)=N^{--}_{D[K(C)]}(c_i)$.
\end{enumerate}
\end{lemma}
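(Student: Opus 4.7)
The plan is to leverage Lemma \ref{lm5}, which asserts that $N^+_{D[K(C)]}(a_i)$ and $N^+_{D[K(C)]}(c_i)$ agree outside of $\{a_i,c_i\}$. Since the missing edges inside $K(C)$ are exactly the $a_jb_j$'s and $b_jc_j$'s (the missing paths being pairwise disjoint), the pair $a_ic_i$ is an arc of $D[K(C)]$; as $D$ is oriented, exactly one of $(a_i,c_i)$, $(c_i,a_i)$ belongs to $E(D[K(C)])$. By symmetry it suffices to treat $a_i\to c_i$, in which case $a_i\notin N^+(c_i)$ and therefore $N^+(a_i)=N^+(c_i)\cup\{c_i\}$ as a disjoint union.

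To establish the inclusion $N^{++}(a_i)\subseteq N^{++}(c_i)$ in part (1), take $w\in N^{++}(a_i)$ with a witness $u$ satisfying $a_i\to u\to w$. The witness $u$ cannot be $c_i$: otherwise $c_i\to w$ would place $w$ in $N^+(c_i)\subseteq N^+(a_i)$, contradicting $w\notin N^+(a_i)$. Hence $u\in N^+(a_i)\setminus\{c_i\}=N^+(c_i)\setminus\{a_i\}\subseteq N^+(c_i)$, so $c_i\to u\to w$ witnesses $w\in N^{++}(c_i)$; the requirements $w\neq c_i$ and $w\notin N^+(c_i)$ are immediate from $w\notin N^+(a_i)=N^+(c_i)\cup\{c_i\}$.

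For the reverse inclusion, let $w\in N^{++}(c_i)$ be witnessed by $u'$ with $c_i\to u'\to w$. Then $u'\in N^+(c_i)\subseteq N^+(a_i)$, so $a_i\to u'\to w$, and the non-membership $w\notin N^+(a_i)=N^+(c_i)\cup\{c_i\}$ is immediate. The subtle step, and the one I expect to be the main obstacle, is ruling out $w=a_i$: this is exactly where the oriented-graph hypothesis is needed, since $w=a_i$ combined with $a_i\to u'$ would give a digon on $\{a_i,u'\}$, which is forbidden. Hence $w\in N^{++}(a_i)$, completing part (1).

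Part (2) concerning $N^{--}$ is handled by applying the same argument to the reverse digraph of $D[K(C)]$; formally, one uses Lemma \ref{lm5}(2) in place of Lemma \ref{lm5}(1) and exchanges the roles of $N^+$ and $N^-$ throughout, with the oriented-graph hypothesis again rescuing the edge case $w=a_i$ in the reverse inclusion.
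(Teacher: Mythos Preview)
Your proof is correct and follows essentially the same approach as the paper's: both arguments hinge on Lemma~\ref{lm5} to transfer a witness $u$ from $N^+(a_i)$ to $N^+(c_i)$ (and vice versa), after ruling out the degenerate cases $u=c_i$ and $w\in\{a_i,c_i\}$. Your WLOG reduction to $a_i\to c_i$ streamlines the bookkeeping slightly compared to the paper's symmetric treatment, but the substance is the same.
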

\begin{proof}
(1) Let $y \in N^{++}_{D[K(C)]}(a_i)$. So, there exists $x \in N^+_{D[K(C)]}(a_i)$ such that $a_i \rightarrow x \rightarrow y$ and $y \rightarrow a_i$. Note that $x \neq c_i$ since otherwise $c_i \rightarrow y$ implying that $a_i \rightarrow y$ by Lemma \ref{lm5}, a contradiction. Likewise, we can see that $y \neq c_i$. Hence, by Lemma \ref{lm5}, we get $c_i \rightarrow x \rightarrow y$ and $y \rightarrow c_i$. Thus, $N^{++}_{D[K(C)]}(a_i) \subseteq N^{++}_{D[K(C)]}(c_i)$. The converse is proved similarly.\\
(2) By applying the same reasoning, we get $N^{--}_{D[K(C)]}(a_i)=N^{--}_{D[K(C)]}(c_i)$.
\end{proof}

By Corollary \ref{c.relation} and Lemma \ref{lm5}, for $i \in [1,k]$, we may see that $a_i$ and $c_i$ have the same behavior which is the converse of that of $b_i$. This fact is restated more precisely in the following corollary.
\begin{corollary}
\label{c.p}
Let $D$ be a tournament missing disjoint paths of length 2. Let $C=a_1b_1c_1, \ldots, a_kb_kc_k$ be a double cycle in $\Delta(D)$. For all $t \in [1,k]$ and $i \in [1,k]-\{t\}$, in $D[K(C)]$, we have:
\begin{enumerate}\item The following statements are equivalent.
	  \begin{itemize}
	  \item $a_i \in \{a_t^+ \cup c_t^+ \cup b_t^-\}$
	  \item $c_i \in \{a_t^+ \cup c_t^+ \cup b_t^-\}$
	  \item $\{a_i,c_i\} \subseteq \{a_t^+ \cap c_t^+ \cap b_t^-\}$
	  \item $b_i \in \{a_t^- \cup c_t^- \cup b_t^+\}$
	  \item $b_i \in \{a_t^- \cap c_t^- \cap b_t^+\}$
	  \end{itemize}	 
\item The following statements are equivalent.
	  \begin{itemize}
	  \item $a_i \in \{a_t^- \cup c_t^- \cup b_t^+\}$
	  \item $c_i \in \{a_t^- \cup c_t^- \cup b_t^+\}$
	  \item $\{a_i,c_i\} \subseteq \{a_t^- \cap c_t^- \cap b_t^+\}$
	  \item $b_i \in \{a_t^+ \cup c_t^+ \cup b_t^-\}$
	  \item $b_i \in \{a_t^+ \cap c_t^+ \cap b_t^-\}$
	  \end{itemize}
\end{enumerate}
\end{corollary}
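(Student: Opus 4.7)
The plan is to reduce all five conditions in part~(1) to the single pivotal equivalence ``$a_i\in a_t^+$'' by chaining together Lemma~\ref{lm5} and the ``Conclusion'' displayed immediately before this corollary; part~(2) will then follow at once by contraposition in the tournament. Throughout, I will use that every pair of vertices drawn from the disjoint paths $a_tb_tc_t$ and $a_ib_ic_i$ (with $t\neq i$) is adjacent in $D$, since the missing edges lie within individual paths. This adjacency makes ``$\in a_t^+$'' the exact negation of ``$\in a_t^-$'' for a vertex in path~$i$, and similarly for $b_t$ and $c_t$.

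For part~(1), I would first apply both parts of Lemma~\ref{lm5} to the triple $a_ib_ic_i$, yielding $N^+(a_i)\setminus\{c_i\}=N^+(c_i)\setminus\{a_i\}$ and $N^-(a_i)\setminus\{c_i\}=N^-(c_i)\setminus\{a_i\}$. Testing membership with each of $a_t,b_t,c_t$ (none of which equals $a_i$ or $c_i$) yields $a_i\in X\Leftrightarrow c_i\in X$ for $X\in\{a_t^+,c_t^+,b_t^-\}$, which collapses the four $\{a_i,c_i\}$-conditions of part~(1) to the triple $a_i\in a_t^+$, $a_i\in c_t^+$, $a_i\in b_t^-$. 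Next, Lemma~\ref{lm5}(1) applied to the triple $a_tb_tc_t$ gives $a_i\in a_t^+\Leftrightarrow a_i\in c_t^+$, and the ``Conclusion'' applied with $x=a_i$ and missing edge $a_tb_t$ yields $a_i\to a_t\Leftrightarrow a_i\leftarrow b_t$, hence $a_i\in a_t^+\Leftrightarrow a_i\in b_t^-$. For the $b_i$-side, the ``Conclusion'' applied with $x=b_i$ and the missing edges $a_tb_t$ and $b_tc_t$ gives $b_i\in a_t^-\Leftrightarrow b_i\in b_t^+\Leftrightarrow b_i\in c_t^-$. Finally, to bridge the $a_i$-side to the $b_i$-side, I would apply the ``Conclusion'' once more with $x=a_t$ and the missing edge $a_ib_i$, yielding $a_t\to a_i\Leftrightarrow a_t\leftarrow b_i$, i.e., $a_i\in a_t^+\Leftrightarrow b_i\in a_t^-$. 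Chaining these equivalences produces part~(1) in full.

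Part~(2) then requires essentially no new work: each of its five conditions is the pointwise swap of $+$ and $-$ of the corresponding condition in part~(1), and since every involved pair of vertices is adjacent in the tournament $D[K(C)]$, these swaps are exactly set-complements. I do not anticipate any substantive obstacle; the argument is pure bookkeeping of equivalences already contained in Lemma~\ref{lm5} and the ``Conclusion''. The only point requiring attention is to verify at each use of the ``Conclusion'' that the two relevant pairs are non-missing, which always holds because we invoke it only across distinct paths indexed by $t\neq i$.
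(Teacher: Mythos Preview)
Your proposal is correct and follows essentially the same route as the paper: the paper also derives this corollary directly from Lemma~\ref{lm5} and Corollary~\ref{c.relation} (whose ``Conclusion'' you invoke), without introducing any further ingredient. The only quibble is the phrase ``the four $\{a_i,c_i\}$-conditions''---there are three such bullets in part~(1)---but this is a harmless miscount and the chain of equivalences you describe is exactly what is needed.
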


\begin{lemma}
\label{lm6}
Let $D$ be a tournament missing disjoint paths of length 2. Let $C=a_1b_1c_1, \ldots, a_kb_kc_k$ be a double cycle in $\Delta(D)$. For all $t \in [1,k]$, we have:
\begin{enumerate}
\item $a_t^{++} = a_t^- \cup \{b_t\} \setminus \{a_{t+1},b_{t+1},c_{t+1},c_t\}$.
\item $c_t^{++} = c_t^- \cup \{b_t\} \setminus \{a_{t+1},b_{t+1},c_{t+1},a_t\}$.
\item $b_t^{++} = b_t^- \cup \{a_t,c_t\} \setminus \{a_{t+1},b_{t+1},c_{t+1}\}$.
\end{enumerate}
\end{lemma}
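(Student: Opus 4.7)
My plan is to reduce all three identities to a single structural fact about the cases of Corollary \ref{c.p}, and then finish by a direct computation. Specifically, I claim that for every $j \in [1,k] \setminus \{t, t+1\}$, the case of Corollary \ref{c.p} holding for $(t, j)$ is opposite to the one holding for $(t+1, j)$. I expect this \emph{opposite-cases} identity to be the main obstacle; once it is in place, the three claimed equalities follow quickly.

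To prove the opposite-cases identity, I would invoke the losing relation $a_tb_t \rightarrow a_{t+1}b_{t+1}$ built into the double cycle, which forces $b_{t+1} \notin a_t^+ \cup a_t^{++}$ and hence $a_t^+ \cap b_{t+1}^- = \emptyset$ inside $D[K(C)]$. If both $(t,j)$ and $(t+1,j)$ were in case~1 of Corollary \ref{c.p}, then $a_j$ would belong to $a_t^+ \cap b_{t+1}^-$; if both were in case~2, then $b_j$ would do so. Either conclusion contradicts the emptiness above, so the cases must differ.

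Once the opposite-cases identity holds, enumerating over $j$ using Corollary \ref{c.p} produces the set equality
\[
a_{t+1}^+ \setminus \{c_{t+1}\} \;=\; (a_t^- \cup \{b_t\}) \setminus \{a_{t+1}, b_{t+1}, c_{t+1}, c_t\}.
\]
Case~1 for $(t,j)$ contributes $b_j$ to $a_t^-$, paired opposite with $a_j, c_j \in a_{t+1}^+$; case~2 contributes $a_j, c_j$ to $a_t^-$, paired opposite with $b_j \in a_{t+1}^+$; the summand $\{b_t\}$ on the right appears because case~2 holds for the pair $(t+1,t)$ (the reverse of case~1 for $(t,t+1)$ forced by $a_t \rightarrow a_{t+1}$ from the double cycle); and the four removed vertices are precisely those in $\{a_{t+1}, b_{t+1}, c_{t+1}, c_t\}$ that would otherwise spoil either side of the equality.

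With this identity, statement~(1) is immediate: every $w \in a_{t+1}^+ \setminus \{c_{t+1}\}$ lies in $a_t^{++}$ via the 2-path $a_t \rightarrow a_{t+1} \rightarrow w$, yielding $\supseteq$; for $\subseteq$, $a_t^{++} \subseteq a_t^- \cup \{b_t\}$ because $b_t$ is the unique non-neighbor of $a_t$ in $K(C)$, the vertices $a_{t+1}, c_{t+1} \in a_t^+$ are automatically excluded, $b_{t+1} \notin a_t^{++}$ by the losing relation above, and $c_t \notin a_t^{++}$ because by Lemma \ref{lm5} no vertex of $a_t^+ \setminus \{c_t\}$ lies in $c_t^-$. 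Statement~(2) then follows from Lemma \ref{lm++} (which gives $a_t^{++} = c_t^{++}$) together with Lemma \ref{lm5} (which shows the two displayed right-hand sides are equal). For statement~(3), I would run the same argument with the arc $b_t \rightarrow b_{t+1}$ in place of $a_t \rightarrow a_{t+1}$, using the opposite-cases identity to see that $b_{t+1}^+ = (b_t^- \cup \{a_t, c_t\}) \setminus \{a_{t+1}, b_{t+1}, c_{t+1}\}$ (with the term $\{a_t, c_t\}$ coming from case~2 for $(t+1,t)$), and invoking the remaining losing relations $b_tc_t \rightarrow a_{t+1}b_{t+1}$ and $b_tc_t \rightarrow b_{t+1}c_{t+1}$ to rule out $\{a_{t+1}, b_{t+1}, c_{t+1}\}$ from $b_t^{++}$.
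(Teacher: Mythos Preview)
Your argument has a genuine gap: you repeatedly assume $a_t \rightarrow a_{t+1}$, but the double-cycle structure does not force this.  The losing relation $a_tb_t \rightarrow a_{t+1}b_{t+1}$ only says that \emph{one} of $a_{t+1},b_{t+1}$ lies in $a_t^+$ while the other is excluded from $a_t^+\cup a_t^{++}$; both alternatives occur (the paper itself splits into the cases $a_{s+1}\in a_s^+$ and $a_{s+1}\notin a_s^+$ in Proposition~\ref{a}).  When $a_t\rightarrow b_{t+1}$ and $a_{t+1}\rightarrow a_t$ (case~2 of Corollary~\ref{c.p} for the pair $(t,t+1)$), your proof breaks at three places.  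First, your derivation of $a_t^+\cap b_{t+1}^-=\emptyset$ fails, since now $b_{t+1}\in a_t^+$; so your proof of the opposite-cases identity does not go through.  Second, your displayed identity $a_{t+1}^+\setminus\{c_{t+1}\}=(a_t^-\cup\{b_t\})\setminus\{a_{t+1},b_{t+1},c_{t+1},c_t\}$ is false in this situation: case~1 now holds for $(t+1,t)$, so $\{a_t,c_t\}\subseteq a_{t+1}^+$ and $b_t\in a_{t+1}^-$, whence the left side contains $a_t,c_t$ while the right side contains $b_t$ instead.  Third, the inclusion $\supseteq$ via the $2$-path $a_t\rightarrow a_{t+1}\rightarrow w$ is unavailable because $a_t\not\rightarrow a_{t+1}$.

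The paper's proof avoids this pitfall by choosing the intermediate vertex \emph{locally} for each target $x_i\in a_t^-$: it takes the predecessor $x_{i-1}$ from the losing relation $x_{i-1}y_{i-1}\rightarrow x_iy_i$ in the double cycle and shows $a_t\rightarrow x_{i-1}\rightarrow x_i$ directly, using only Corollary~\ref{c.p} and that single losing relation.  This works uniformly, independent of the orientation between $a_t$ and $a_{t+1}$.  Your route can be repaired by treating the two cases of Corollary~\ref{c.p} for $(t,t+1)$ separately (using $b_{t+1}$ as the hub when $a_t\rightarrow b_{t+1}$), but as written the single-hub argument through $a_{t+1}$ is incomplete.
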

\begin{proof}
(1) From the losing relations $a_tb_t \rightarrow a_{t+1}b_{t+1}$ and $a_tb_t \rightarrow b_{t+1}c_{t+1}$, we have $\{a_{t+1},b_{t+1},c_{t+1}\} \cap a_t^{++} =\emptyset$ and $b_t \in a_t^{++}$. Also, we have $c_t \notin a_t^{++}$ because otherwise there exists $x \in K(C)$ such that $a_t \rightarrow x \rightarrow c_t$, which contradicts Lemma \ref{lm5} as $N^+_{D[K(C)]}(a_i) \setminus \{{c_i}\}=N^+_{D[K(C)]}(c_i) \setminus \{{a_i}\}$. Let $x_i \in K(C)$. We may assume that $x_iy_i$ is a missing edge in $C$ for some $y_i \in K(C)$.\\
If $x_i \in a_t^- \setminus \{a_{t+1},b_{t+1},c_{t+1},c_t\}$, then $y_i \in a_t^+$ by Corollary \ref{c.p}. Let $x_{i-1}y_{i-1}$ such that $x_{i-1}y_{i-1} \rightarrow x_iy_i$. We may assume that $x_{i-1}\rightarrow x_i$ and $y_{i-1}\rightarrow y_i$, where $y_i \notin x_{i-1}^+\cup x_{i-1}^{++}$. If $x_{i-1} \rightarrow a_t$, then $x_{i-1} \rightarrow a_t \rightarrow y_i$, and hence $y_i \in x_{i-1}^{++}$, which is a contradiction. It follows that $a_t\rightarrow x_{i-1}$. Thus, $a_t\rightarrow x_{i-1}\rightarrow x_i$, and hence $x_i \in a_t^{++}$. Therefore, $a_t^- \setminus \{a_{t+1},b_{t+1},c_{t+1},c_t\} \subseteq a_t^{++}$. Conversely, we will show that $a_t^{++} \setminus \{b_t\}\subseteq a_t^- \setminus \{a_{t+1},b_{t+1},c_{t+1},c_t\}$. If $x \in a_t^{++} \setminus \{b_t\}$, then there is $y \in a_t^+$ such that $ a_t \rightarrow y\rightarrow x$. But $a_tx$ is not a missing edge, so $x \in a_t^-$. We deduce that $a_t^{++} = a_t^- \cup \{b_t\} \setminus \{a_{t+1},b_{t+1},c_{t+1},c_t\}$.\\
(2) Similarly, by symmetry, we prove that $c_t^{++} = c_t^- \cup \{b_t\} \setminus \{a_{t+1},b_{t+1},c_{t+1},a_t\}$.\\
(3) From the losing relations $a_tb_t \rightarrow a_{t+1}b_{t+1}$ and $b_tc_t \rightarrow b_{t+1}c_{t+1}$, we have $\{a_{t+1},b_{t+1},c_{t+1}\} \cap b_t^{++} =\emptyset$ and $a_t,c_t \in b_t^{++}$. Let $x_i \in K(C)$ where $x_iy_i$ is a missing edge in $C$ for some $y_i \in K(C)$.\\
If $x_i \in b_t^- \setminus \{a_{t+1},b_{t+1},c_{t+1}\}$, then $y_i \in b_t^+$ by Corollary \ref{c.p}. Assume that $x_{i-1}y_{i-1} \rightarrow x_iy_i$ such that $x_{i-1}\rightarrow x_i$ and $y_{i-1}\rightarrow y_i$ where $y_i \notin x_{i-1}^+\cup x_{i-1}^{++}$. If $x_{i-1} \rightarrow b_t$ then $x_{i-1} \rightarrow b_t \rightarrow y_i$, and hence $y_i \in x_{i-1}^{++}$, which is a contradiction. It follows that $b_t\rightarrow x_{i-1}$. Thus, $b_t\rightarrow x_{i-1}\rightarrow x_i$, and hence $x_i \in b_t^{++}$. Therefore, $b_t^- \setminus \{a_{t+1},b_{t+1},c_{t+1}\} \subseteq b_t^{++}$. Conversely, if $x \in b_t^{++} \setminus \{a_t,c_t\}$, then there is $y \in b_t^+$ such that $ b_t \rightarrow y\rightarrow x$. But $b_tx$ is not a missing edge, so $x \in b_t^-$. Thus $b_t^{++} \setminus \{a_t,c_t\}\subseteq b_t^- \setminus \{a_{t+1},b_{t+1},c_{t+1}\}$. We deduce that $b_t^{++} = b_t^- \cup \{a_t,c_t\} \setminus \{a_{t+1},b_{t+1},c_{t+1}\}$.
\end{proof}

\begin{lemma}
Let $D$ be a tournament missing disjoint paths of length 2. Let $C=a_1b_1c_1, \ldots, a_kb_kc_k$ be a double cycle in $\Delta(D)$. For all $t \in [1,k]$, in $D[K(C)]$, we have:
\begin{enumerate}
\item
If $c_t \notin a_t^-$, then
$|a_t^{++}|=\left\{\begin{aligned}
 & |a_t^-| & \quad if\  \ b_{t+1} \in a_t^-,\\
 & |a_t^-|-1 & \quad otherwise.
\end{aligned}\right.$\\
\item
If $a_t \notin c_t^-$, then
$|c_t^{++}|=\left\{\begin{aligned}
 & |c_t^-| & \quad if\  \ b_{t+1} \in c_t^-,\\
 & |c_t^-|-1 & \quad otherwise.
\end{aligned}\right.$\\
\item
$|b_t^{++}|=\left\{\begin{aligned}
 & |b_t^-|+1 & \quad if\  \ b_{t+1} \in b_t^-,\\
 & |b_t^-| & \quad otherwise.
\end{aligned}\right.$\\
\end{enumerate}
\end{lemma}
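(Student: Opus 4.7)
The plan is to apply the explicit set-theoretic descriptions of $a_t^{++}$, $c_t^{++}$, and $b_t^{++}$ provided by Lemma~\ref{lm6} and convert each identity into a cardinality statement by carefully counting the overlaps involved. Each of the three identities has the shape $X^{++}=X^-\cup U\setminus V$ for small explicit sets $U,V$, so after checking that $U$ is disjoint from $X^-$ and from $V$, the task reduces to determining $|X^-\cap V|$.

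First I would establish a dichotomy describing precisely how the four losing relations witnessing the arc $a_tb_tc_t\rightarrow a_{t+1}b_{t+1}c_{t+1}$ of the double cycle are realised. The relations $a_tb_t\rightarrow a_{t+1}b_{t+1}$, $a_tb_t\rightarrow b_{t+1}c_{t+1}$, $b_tc_t\rightarrow a_{t+1}b_{t+1}$, and $b_tc_t\rightarrow b_{t+1}c_{t+1}$ each admit two possible pairings, but once the orientation of the (non-missing) arc between $b_t$ and $b_{t+1}$ is fixed, all four pairings are forced, yielding exactly two configurations. In \emph{Case~A}, where $b_t\rightarrow b_{t+1}$, one reads off $a_t\rightarrow a_{t+1}$, $a_t\rightarrow c_{t+1}$, $c_t\rightarrow a_{t+1}$, $c_t\rightarrow c_{t+1}$, together with $b_{t+1}\in a_t^-\cap c_t^-$ and $a_{t+1},c_{t+1}\in b_t^-$. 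In \emph{Case~B}, where $b_{t+1}\rightarrow b_t$, one reads off $b_t\rightarrow a_{t+1}$, $b_t\rightarrow c_{t+1}$, $a_t\rightarrow b_{t+1}$, $c_t\rightarrow b_{t+1}$, together with $a_{t+1},c_{t+1}\in a_t^-\cap c_t^-$ and $b_{t+1}\in b_t^-$. In particular, $b_{t+1}\in a_t^-$ exactly in Case~A, and $b_{t+1}\in b_t^-$ exactly in Case~B.

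Once the dichotomy is in hand, the three cardinalities drop out quickly. For (1), we have $b_t\notin a_t^-$ (as $a_tb_t$ is missing) and $b_t,c_t\notin\{a_{t+1},b_{t+1},c_{t+1}\}$ (as the missing paths are disjoint); with the hypothesis $c_t\notin a_t^-$ this gives $|a_t^-\cup\{b_t\}|=|a_t^-|+1$, and the overlap with $\{a_{t+1},b_{t+1},c_{t+1},c_t\}$ is exactly $a_t^-\cap\{a_{t+1},b_{t+1},c_{t+1}\}$. By the dichotomy this intersection is $\{b_{t+1}\}$ in Case~A and $\{a_{t+1},c_{t+1}\}$ in Case~B, so Lemma~\ref{lm6}(1) yields $|a_t^{++}|=|a_t^-|$ when $b_{t+1}\in a_t^-$ and $|a_t^{++}|=|a_t^-|-1$ otherwise. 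Part (2) is obtained verbatim after swapping $a_t$ and $c_t$, since the dichotomy and Lemma~\ref{lm6}(2) are symmetric under this swap. For (3), $a_t,c_t\notin b_t^-$ gives $|b_t^-\cup\{a_t,c_t\}|=|b_t^-|+2$, and the overlap with $\{a_{t+1},b_{t+1},c_{t+1}\}$ is $b_t^-\cap\{a_{t+1},b_{t+1},c_{t+1}\}$, which has size $2$ in Case~A and size $1$ in Case~B; Lemma~\ref{lm6}(3) then gives the stated formula.

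The main obstacle is the opening dichotomy: one must verify carefully that, starting from the orientation of the arc between $b_t$ and $b_{t+1}$, all four losing-relation pairings are uniquely determined and mutually consistent. After that, what remains is essentially bookkeeping and elementary counting.
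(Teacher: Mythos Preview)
Your proposal is correct and follows essentially the same approach as the paper: both arguments invoke Lemma~\ref{lm6} and then reduce the counting to the dichotomy ``which of $a_{t+1},b_{t+1},c_{t+1}$ lie in $a_t^-$ (resp.\ $b_t^-$, $c_t^-$)''. The only difference is cosmetic: you re-derive this dichotomy directly from the four losing relations and the orientation of the arc between $b_t$ and $b_{t+1}$, whereas the paper simply cites Corollary~\ref{c.p}, which already packages exactly this equivalence.
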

\begin{proof}
(1) By Corollary \ref{c.p}, we have $b_{t+1} \in a_t^-$ if and only if $a_{t+1},c_{t+1} \in a_t^+$, that is, if and only if $a_{t+1},c_{t+1} \notin a_t^-$. Likewise, $b_{t+1} \notin a_t^-$ if and only if $a_{t+1},c_{t+1} \in a_t^-$. Recall that, by Lemma \ref{lm6}, we have $a_t^{++} = a_t^- \cup \{b_t\} \setminus \{a_{t+1},b_{t+1},c_{t+1}\}$.\\
If $b_{t+1} \in a_t^-$, then $|a_t^{++}| = |a_t^-| + |b_t| - |b_{t+1}| = |a_t^-|$.\\
If $b_{t+1} \notin a_t^-$, then $|a_t^{++}| = |a_t^-| + |b_t| - |\{a_{t+1},c_{t+1}\}| = |a_t^-| - 1$.\\
(2) The proof runs as before.\\
(3) By Corollary \ref{c.p}, we have $b_{t+1} \in b_t^-$ if and only if $a_{t+1},c_{t+1} \in b_t^+$, that is, if and only if $a_{t+1},c_{t+1} \notin b_t^-$. Likewise, $b_{t+1} \notin b_t^-$ if and only if $a_{t+1},c_{t+1} \in b_t^-$. Recall that, by Lemma \ref{lm6}, we have $b_t^{++} = b_t^- \cup \{a_t,c_t\} \setminus \{a_{t+1},b_{t+1},c_{t+1}\}$.\\
If $b_{t+1} \in b_t^-$, then $|b_t^{++}| = |b_t^-| + |\{a_{t},c_{t}\}| - |b_{t+1}| = |b_t^-| + 1$.\\
If $b_{t+1} \notin b_t^-$, then $|b_t^{++}| = |b_t^-| + |\{a_{t},c_{t}\}| - |\{a_{t+1},c_{t+1}\}| = |b_t^-|$.
\end{proof}

\indent Now, we are ready to find vertices satisfying the SNP.

\begin{proposition}
\label{a}
Let $D$ be a tournament missing disjoint paths of length 2. Let $C=a_1b_1c_1, \ldots, a_kb_kc_k$ be a double cycle in $\Delta(D)$. There exists $s \in [1,k]$ such that $|N_{D[K(C)]}^+(a_s)| \leq |N_{D[K(C)]}^{++}(a_s)|$, or $|N_{D[K(C)]}^+(c_s)| \leq |N_{D[K(C)]}^{++}(c_s)|$.
\end{proposition}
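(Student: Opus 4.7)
For each $t$, since $D$ restricted to $K(C)$ is obtained from a tournament and $a_tc_t$ is not a missing edge, exactly one of $a_t \to c_t$ or $c_t \to a_t$ holds; write $\{u_t, w_t\} = \{a_t, c_t\}$ with $u_t \to w_t$. By Lemma \ref{lm5}, $u_t$ and $w_t$ agree on their in- and out-neighborhoods outside $\{a_t, c_t\}$, so $|w_t^+| = |u_t^+| - 1$, and by Lemma \ref{lm++} we have $|w_t^{++}| = |u_t^{++}|$. Consequently $w_t$ is the better SNP candidate in the pair, and the plan is to exhibit some $s$ with $|w_s^+| \leq |w_s^{++}|$.

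Next, by Corollary \ref{c.p}, for each $t' \neq t$ exactly one of the two alternatives occurs: $\{a_{t'}, c_{t'}\} \subseteq a_t^+$ together with $b_{t'} \in a_t^-$, or $\{a_{t'}, c_{t'}\} \subseteq a_t^-$ together with $b_{t'} \in a_t^+$. Define $X_{t,t'} \in \{0,1\}$ to be $1$ in the first case and $0$ in the second; since $D$ is a tournament, $X_{t,t'} + X_{t',t} = 1$ for all $t \neq t'$. Setting $\sigma_t = \sum_{t' \neq t} X_{t,t'}$, a direct count over the three types of contributions ($\{a_{t'},c_{t'}\}$, $b_{t'}$, and $c_t$ itself) yields
\[
|w_t^+| = \sigma_t + (k-1), \qquad |u_t^-| = 2k - 2 - \sigma_t,
\]
while summing the identity $X_{t,t'} + X_{t',t} = 1$ over unordered pairs gives $\sum_{t=1}^k \sigma_t = \binom{k}{2}$.

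The decisive step is to show that $b_{t+1} \in u_t^-$. The double cycle encodes in particular the losing relations $a_tb_t \rightarrow a_{t+1}b_{t+1}$ and $b_tc_t \rightarrow b_{t+1}c_{t+1}$, which force $b_{t+1} \notin a_t^+ \cup a_t^{++}$ and $b_{t+1} \notin c_t^+ \cup c_t^{++}$. Since every missing edge of $D$ lies inside a single triple, $b_{t+1}$ is joined by arcs to both $a_t$ and $c_t$, so $b_{t+1} \in a_t^- \cap c_t^- \subseteq u_t^-$. Applying the lemma immediately preceding the proposition then gives $|w_t^{++}| = |u_t^{++}| = |u_t^-| = 2k - 2 - \sigma_t$.

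Combining the displays, $|w_t^+| \leq |w_t^{++}|$ is equivalent to $\sigma_t \leq (k-1)/2$. If this inequality failed for every $t$, then integrality of $\sigma_t$ would force $\sigma_t \geq \lceil k/2 \rceil$ for all $t$, so $\sum_t \sigma_t \geq k \lceil k/2 \rceil > k(k-1)/2 = \binom{k}{2}$, contradicting the identity $\sum_t \sigma_t = \binom{k}{2}$. Therefore some $s$ satisfies $\sigma_s \leq (k-1)/2$, and for this $s$ the vertex $w_s \in \{a_s, c_s\}$ has the SNP in $D[K(C)]$. I expect the main obstacle to be the step $b_{t+1} \in u_t^-$: once that is extracted from the double-cycle structure, the remainder is a routine averaging argument over $\sigma_t$.
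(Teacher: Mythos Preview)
Your ``decisive step'' is incorrect. The losing relation $a_tb_t \rightarrow a_{t+1}b_{t+1}$ does \emph{not} force $b_{t+1} \notin a_t^+ \cup a_t^{++}$; by definition it only forces one of $a_{t+1},b_{t+1}$ to lie outside $a_t^+\cup a_t^{++}$, and it may well be $a_{t+1}$. Concretely, it is perfectly consistent with the double-cycle structure that $a_t\to b_{t+1}$ and $b_t\to a_{t+1}$ (this is exactly Case~2 of the paper's proof), in which case $b_{t+1}\in a_t^+$ and, by Lemma~\ref{lm5}, $b_{t+1}\in c_t^+$ as well; hence $b_{t+1}\notin u_t^-$. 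In that situation the lemma immediately preceding the proposition gives $|u_t^{++}|=|u_t^-|-1$ rather than $|u_t^-|$, so $|w_t^{++}|=2k-3-\sigma_t$ and you would need $\sigma_t\le(k-2)/2$. Your averaging argument then collapses in the boundary case where $k$ is odd, $a_{t+1}\to a_t$ for every $t$, and the subtournament on $\{a_1,\dots,a_k\}$ is regular: there every $\sigma_t=(k-1)/2$, and a direct check gives $|w_t^+|=|w_t^{++}|+1$ for \emph{all} $t$, so no index works.

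The paper circumvents this by choosing $s$ not via a degree average but by invoking Theorem~\ref{t.tourn} inside the subtournament $D[A]$ on $\{a_1,\dots,a_k\}$: one picks $a_s$ with $m_1:=|N^+_{D[A]}(a_s)|\le|N^{++}_{D[A]}(a_s)|=:m_2$ and (after possibly swapping $a_s$ with $c_s$ so that $c_s\to a_s$) computes $|a_s^+|=2m_1+m_2$ and $|a_s^{++}|=2m_2+m_1$ (with a uniform $+1$ in the case $a_{s+1}\notin a_s^+$). The second parameter $m_2$, coming from the SNP in $D[A]$ rather than a mere degree count, is precisely what your argument is missing.
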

\begin{proof}
Set $A=\{a_i\colon i\in [1,k]\}$, $A'=\{c_i\colon i\in [1,k]\}$ and $B=\{b_i\colon i\in [1,k]\}$.\\
Let $D[A]$ be the digraph induced by $A$. We have $D[A]$ is a tournament. Hence, by Theorem \ref{t.tourn}, there is a vertex $a_s$ satisfying $SNP$ in $D[A]$. So $|N_{D[A]}^+(a_s)| \leq |N_{D[A]}^{++}(a_s)|$. We may suppose that $c_s \notin a_s^+$ (otherwise we consider $c_s$). Set $|N_{D[A]}^+(a_s)|=m_1$ and $|N_{D[A]}^{++}(a_s)|=m_2$. So $m_1 \leq m_2$.

We compute $|a_s^+|$ and $|a_s^{++}|$ in $K(C)$:\\
By Corollary \ref{c.p}, for all $i \in [1,k]\setminus \{s\}$, we have $a_i \in a_s^+$ if and only if $c_i \in a_s^+$, and consequently $a_i \in a_s^{++}$ if and only if $c_i \in a_s^{++}$. It follows that $|a_s^+ \cap A'| = |a_s^+ \cap A|$ and $|a_s^{++} \cap A'| = |a_s^{++} \cap A|$. Therefore $|(A \cup A')\cap a_s^+|=2m_1$ and $|(A \cup A')\cap a_s^{++}|=2m_2$. Also, by Corollary \ref{c.p}, we have $b_i \in a_s^-$ if and only if $a_i \in a_s^+$. Equivalently, $|B \cap a_s^-|=|A \cap a_s^+|=m_1$
\setcounter{case}{0}
\begin{case}
$a_{s+1} \in a_s^+$.
\end{case}
First we compute the number of $b_i$'s contained in $a_s^{++}$. Since $a_{s+1} \in a_s^+$, we have $b_{s+1} \in a_s^-$ by Corollary \ref{c.p}. Recall that, by Lemma \ref{lm6}, we have $a_s^{++} = a_s^- \cup \{b_s\} \setminus \{a_{s+1},b_{s+1},c_{s+1}\}$. Note that $b_s \notin a_s^-$ since $a_sb_s$ is a missing edge. Thus
\begin{align*}
|B \cap a_s^{++}| &= |B \cap a_s^-|+|\{b_s\}|-|\{b_{s+1}\}|\\
                     &= m_1 + 1 - 1\\
                     &= m_1
\end{align*}
Therefore,
\begin{equation}
\label{a++}
|a_s^{++}|=2m_2 + m_1
\end{equation}

Now we compute the number of $b_i$'s contained in $a_s^+$. We have $a_{s+1} \in a_s^+$ and $b_{s+1} \in a_s^-$; equivalently, we have $a_{s+1} \notin a_s^-$ and $b_{s+1} \notin a_s^+$. Thus $a_i \in a_s^{++}$ if and only if $a_i \in a_s^- \cup \{b_s\} \setminus \{a_{s+1},b_{s+1},c_{s+1}\}$, that is, if and only if $a_i \in a_s^-$ since $a_{s+1} \notin a_s^-$. But $a_i \in a_s^-$ if and only if $b_i \in a_s^+$ by Corollary \ref{c.p}. Note that $b_{s+1} \notin a_s^+$. It follows that $a_i \in a_s^{++}$ if and only if $b_i \in a_s^+$. This means that for each $a_i$ in $a_s^{++}$, we count $b_i$ in $a_s^+$. Thus $|B \cap a_s^+|=|A \cap a_s^{++}|=m_2$. Therefore,
\begin{equation}
\label{a+}
|a_s^+| = 2m_1 + m_2
\end{equation}
Equations (\ref{a++}) and (\ref{a+}) show that $|a_s^+| \leq |a_s^{++}|$.

\begin{case}
$a_{s+1} \notin a_s^+$.
\end{case}
We apply the same reasoning.\\
Since $a_{s+1} \notin a_s^+$, we have $b_{s+1} \notin a_s^-$ by Corollary \ref{c.p}. Thus
\begin{align*}
|B \cap a_s^{++}| &= |B \cap a_s^-|+|\{b_s\}|\\
                     &= m_1 + 1\\
                     &= m_1 + 1
\end{align*}
Therefore,
\begin{equation}
\label{a++2}
|a_s^{++}|=2m_2 + m_1 + 1
\end{equation}

Recall that, equation (\ref{a+}) gives the size of $a_s^+$ in case of $b_{s+1} \notin a_s^+$. Here, we have $a_{s+1} \notin a_s^+$, and hence $b_{s+1} \in a_s^+$ by Corollary \ref{c.p}. So the right-hand side of equation (\ref{a+}) require a simple modification, that is, $a_s^+$ gains only $b_{s+1}$. Thus,
\begin{equation}
\label{a+2}
|a_s^+| = 2m_1 + m_2 + 1
\end{equation}
Again, equations (\ref{a++2}) and (\ref{a+2}) show that $|a_s^+| \leq |a_s^{++}|$.
\end{proof}

\begin{proposition}
\label{b}
Let $D$ be a tournament missing disjoint paths of length 2. Let $C=a_1b_1c_1, \ldots, a_kb_kc_k$ be a double cycle in $\Delta(D)$. There exists $t \in [1,k]$ such that $|N_{D[K(C)]}^+(b_t)| \leq |N_{D[K(C)]}^{++}(b_t)|$.
\end{proposition}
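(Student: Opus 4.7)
The plan is to find $b_t$ with large out-degree in $D[B]$ and then count using the structural lemmas. Specifically, I would pick $b_t \in B := \{b_1,\ldots,b_k\}$ of \emph{maximum} out-degree in the induced sub-tournament $D[B]$; since $D[B]$ carries $\binom{k}{2}$ arcs, averaging yields $n_1 := |N_{D[B]}^+(b_t)| \geq (k-1)/2$. It is worth emphasizing that one \emph{cannot} simply re-apply Fisher's theorem to $D[B]$ in the style of Proposition \ref{a}: doing so would give $n_1 \leq n_2 \leq k-1-n_1$, i.e., $n_1 \leq (k-1)/2$, the opposite direction of what is needed below.

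Given such a $b_t$, I would compute $|b_t^+|$ and $|b_t^-|$ in $D[K(C)]$ via Corollary \ref{c.p}, which tells us that for every $i \neq t$, $b_i \in b_t^+$ if and only if $\{a_i,c_i\} \subseteq b_t^-$, and $b_i \in b_t^-$ if and only if $\{a_i,c_i\} \subseteq b_t^+$. Combined with the fact that $a_t, c_t$ lie in neither side (as $a_tb_t$ and $b_tc_t$ are missing edges), this gives
\[
|b_t^+| = n_1 + 2(k-1-n_1) = 2k-2-n_1, \qquad |b_t^-| = (k-1-n_1) + 2n_1 = k-1+n_1.
\]
Next, Lemma \ref{lm6}(3) reads $b_t^{++} = (b_t^- \cup \{a_t, c_t\}) \setminus \{a_{t+1},b_{t+1},c_{t+1}\}$. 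The losing relations defining the double cycle $C$ force $b_t \to b_{t+1}$, so $b_{t+1} \in b_t^+$, and Corollary \ref{c.p} then places $a_{t+1}, c_{t+1} \in b_t^-$. Hence exactly two of the three removed vertices lie in $b_t^- \cup \{a_t, c_t\}$, so $|b_t^{++}| = |b_t^-| + 2 - 2 = k - 1 + n_1$.

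Subtracting gives $|b_t^{++}| - |b_t^+| = 2n_1 - (k-1) \geq 0$ by the choice of $b_t$, establishing the SNP for $b_t$ in $D[K(C)]$. The main obstacle is exactly the vertex-selection step: the final count is monotone in $n_1$ opposite to how the analogous count is monotone in $m_1$ in Proposition \ref{a}, reflecting the dual roles of the ``central'' vertices $b_i$ and the ``end'' vertices $a_i, c_i$ in the double-cycle combinatorics. Thus, rather than imitating the proof of Proposition \ref{a} literally by invoking Fisher on $D[B]$, one must select $b_t$ via maximum out-degree in $D[B]$ (equivalently, by applying Fisher to the reverse of $D[B]$ and using the inclusion $N^{--} \subseteq N^+$ valid in any tournament).
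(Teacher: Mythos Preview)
Your approach is essentially the paper's: select $b_t$ with $n_1 := d^+_{D[B]}(b_t) \geq (k-1)/2$ (the paper phrases this as $n_1 \geq n_2 := d^-_{D[B]}(b_t)$, obtained by the same averaging argument), then count using Corollary \ref{c.p} and Lemma \ref{lm6}(3). Your observation that invoking Fisher directly on $D[B]$ would yield the wrong inequality is correct and is precisely why the paper's selection here differs from that in Proposition \ref{a}.

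There is, however, one genuine error. You assert that ``the losing relations defining the double cycle $C$ force $b_t \to b_{t+1}$''. This is false: from $a_tb_t \to a_{t+1}b_{t+1}$ one only obtains that \emph{one} of $\{a_t,b_t\}$ goes to $a_{t+1}$ and the other to $b_{t+1}$, and the assignment $a_t \to b_{t+1}$, $b_t \to a_{t+1}$ (whence $b_{t+1} \to b_t$ by the Conclusion following Corollary \ref{c.p}) is fully consistent with all four losing relations of the double cycle. The paper accordingly splits into the two cases $b_{t+1} \in b_t^+$ and $b_{t+1} \in b_t^-$. Fortunately your argument survives: in the case you omitted ($b_{t+1} \in b_t^-$, hence $a_{t+1},c_{t+1} \in b_t^+$ by Corollary \ref{c.p}), only one of the three removed vertices lies in $b_t^- \cup \{a_t,c_t\}$, giving $|b_t^{++}| = |b_t^-| + 2 - 1 = k + n_1$, and the SNP inequality becomes strict. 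So the gap is real but the repair is immediate.
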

\begin{proof}
Set $A=\{a_i\colon i\in [1,k]\}$, $A'=\{c_i\colon i\in [1,k]\}$ and $B=\{b_i\colon i\in [1,k]\}$.\\
Note that $\displaystyle\sum_{i=1}^{i=k} d_{D[B]}^+(b_i)=\displaystyle\sum_{i=1}^{i=k} d_{D[B]}^-(b_i)= |E({D[B]})|$. It is easy to see that there exists $t \in [1,k]$ such that $|N_{D[B]}^+(b_t)| \geq |N_{D[B]}^-(b_t)|$. In fact, if $|N_{D[B]}^+(b_i)| < |N_{D[B]}^-(b_i)|$ for all $i \in [1,k]$, then $\displaystyle\sum_{i=1}^{i=k} d_{D[B]}^+(b_i) < \displaystyle\sum_{i=1}^{i=k} d_{D[B]}^-(b_i)$, which is a contradiction.\\
Set $|N_{D[B]}^+(b_t)|=n_1$ and $|N_{D[B]}^-(b_t)|=n_2$. So $n_1 \geq n_2$. By Corollary \ref{c.p}, we have $b_i \in b_t^+$ if and only if $a_i,c_i \in b_t^-$ for all $i \in [1,k]$. Hence for each $b_i \in b_t^+$ we count 2 elements, $a_i$ and $c_i$, in $b_t^-$. Equivalently, we have $|(A \cup A')\cap b_t^-|=2n_1$. Thus
\begin{align*}
 |b_t^-| & = |(A \cup A')\cap b_t^-|+|B \cap b_t^-|\\
         & = 2n_1 + n_2
\end{align*}

Compute $|b_t^{+}|$:\\
By Corollary \ref{c.p}, we have $b_i \in b_t^-$ if and only if $a_i,c_i \in b_t^+$ for all $i \in [1,k]$. Hence for each $b_i \in b_t^-$ we count 2 elements, $a_i$ and $c_i$, in $b_t^+$. Equivalently, we have $|(A \cup A')\cap b_t^+|=2n_2$. Thus
\begin{align*}
 |b_t^+| & = |(A \cup A')\cap b_t^+| + |B \cap b_t^+|\\
         & = 2n_2 + n_1
\end{align*}

Compute $|b_t^{++}|$. By Lemma \ref{lm6}, we have $b_t^{++} = b_t^- \cup \{a_t,c_t\} \setminus \{a_{t+1},b_{t+1},c_{t+1}\}$. There are two cases:
\setcounter{case}{0}
\begin{case}
If $b_{t+1} \in b_t^+$, then $a_{t+1}, c_{t+1} \in b_t^-$ by Corollary \ref{c.p}. Thus,
\begin{align*}
   |b_t^{++}| & = |b_t^-| + |\{{a_t,c_t}\}| - |\{{a_{t+1}, c_{t+1}}\}|\\
              & = 2n_1 + n_2 + 2 - 2\\
              & = 2n_1 + n_2
\end{align*}
\end{case}
\begin{case}
If $b_{t+1} \in b_t^-$, then $a_{t+1}, c_{t+1} \notin b_t^-$ by Corollary \ref{c.p}. Thus,
\begin{align*}
   |b_t^{++}| & = |b_t^-| + |\{{a_t,c_t}\}| - |\{{b_{t+1}}\}|\\
              & = 2n_1 + n_2 + 2 - 1\\
              & = 2n_1 + n_2 + 1
\end{align*}
\end{case}

In both cases, we have $2n_2 + n_1 \leq 2n_1 +n_2$ and $2n_2 + n_1 \leq 2n_1 +n_2 + 1$ since $n_1 \geq n_2$. Therefore $|b_t^{+}| \leq |b_t^{++}|$.
\end{proof}

\subsubsection{Case of $\Delta$ is 2-regular}
\begin{lemma}\cite{fidler2007remarks}
\label{k.0}
Let $D$ be a tournament missing a matching. Let $C=a_1b_1,\ldots,a_kb_k$ be a directed cycle of $\Delta(D)$ such that $a_i \rightarrow a_{i+1}$ and $b_i \rightarrow b_{i+1}$ for all $i \in [1,k-1]$.\\
i) If $k$ is odd, then $a_k \rightarrow a_1$ and $b_k \rightarrow b_1$.\\
ii) If $k$ is even, then $a_k \rightarrow b_1$ and $b_k \rightarrow a_1$.
\end{lemma}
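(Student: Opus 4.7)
The plan is to derive, by a forward induction along the cycle, a single arc into $b_1$ whose presence forces the ``wrong'' orientation of the closing losing relation $a_kb_k\to a_1b_1$ to fail, thereby determining the ``right'' one.

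First I would unpack what the parallel losing hypothesis $a_ib_i\to a_{i+1}b_{i+1}$ gives at each step $i\in[1,k-1]$. By the definition of the losing relation, the pairing $a_i\to a_{i+1}$, $b_i\to b_{i+1}$ forces $a_{i+1}\notin b_i^+\cup b_i^{++}$ and $b_{i+1}\notin a_i^+\cup a_i^{++}$. Since $D$ is missing only a matching, $a_{i+1}b_i$ and $a_ib_{i+1}$ are genuine edges of $D$, and the containment conditions yield the ``cross-back'' arcs $a_{i+1}\to b_i$ and $b_{i+1}\to a_i$ for every $i\in[1,k-1]$. I would then prove by induction on $j\in[2,k]$ that $a_j\to b_1$ when $j$ is even and $b_j\to b_1$ when $j$ is odd. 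The base $j=2$ is exactly the cross-back arc $a_2\to b_1$ just derived. For the step, assume the claim at $j<k$ and invoke the parallel losing at step $j$. If $j$ is even, then $b_1\in a_j^+$, so the constraint $b_{j+1}\notin a_j^{++}$ forbids $b_1\to b_{j+1}$; since $b_1b_{j+1}$ is not a missing edge, we conclude $b_{j+1}\to b_1$. If $j$ is odd, the symmetric use of $a_{j+1}\notin b_j^{++}$ together with $b_1\in b_j^+$ yields $a_{j+1}\to b_1$.

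Finally I would apply the dichotomy at step $k$. The losing relation $a_kb_k\to a_1b_1$ takes one of exactly two shapes: the parallel one, $a_k\to a_1$ and $b_k\to b_1$ with $b_1\notin a_k^+\cup a_k^{++}$, or the crossed one, $a_k\to b_1$ and $b_k\to a_1$ with $b_1\notin b_k^+\cup b_k^{++}$. If $k$ is even, the induction gives $a_k\to b_1$, so $b_1\in a_k^+$ rules out the parallel shape and forces the crossed one, proving (ii). If $k$ is odd, the induction gives $b_k\to b_1$, so $b_1\in b_k^+$ rules out the crossed shape and forces the parallel one, proving (i). The only step requiring care is the inductive one, where one must verify that the argument uses solely the parallel hypothesis at step $j$ together with the previously derived arc into $b_1$, and not any of the still-undetermined orientations between other pairs of cycle vertices; that is exactly what the constraint $b_{j+1}\notin a_j^{++}$ (resp.\ $a_{j+1}\notin b_j^{++}$) arranges.
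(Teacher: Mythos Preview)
Your proof is correct. The paper does not give its own proof of this lemma --- it is quoted from Fidler and Yuster \cite{fidler2007remarks} --- so there is no in-paper argument to compare against. Your approach (derive the cross-back arcs $a_{i+1}\to b_i$, $b_{i+1}\to a_i$ from the parallel losing relations, then propagate a single arc into $b_1$ by induction on $j$ using the constraint $b_{j+1}\notin a_j^{++}$ or $a_{j+1}\notin b_j^{++}$, and finally use that arc to eliminate one of the two possible shapes of the closing losing relation $a_kb_k\to a_1b_1$) is the natural one and matches the style of the arguments the paper does carry out for related structures, e.g.\ the proof of Lemma~\ref{lm4}. The only point worth stating slightly more explicitly is that the labeling of the unordered missing edge $a_1b_1$ makes the closing relation take \emph{exactly} one of the two forms you list; you use this, but it is implicit.
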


Note that if $C$ is a directed cycle in $\Delta(D)$, then $C$ is also a directed cycle in $\Delta(D[K(C)])$. So we can modify Lemma \ref{k.0} as follows.

\begin{lemma}
\label{k.1}
Let $D$ be an oriented graph. Let $C=a_1b_1,\ldots,a_kb_k$ be a directed cycle of $\Delta(D)$ such that $a_i \rightarrow a_{i+1}$ and $b_i \rightarrow b_{i+1}$ for all $i \in [1,k-1]$. Suppose that $D[K(C)]$ is a tournament missing a matching.\\
i) If $k$ is odd, then $a_k \rightarrow a_1$ and $b_k \rightarrow b_1$.\\
ii) If $k$ is even, then $a_k \rightarrow b_1$ and $b_k \rightarrow a_1$.
\end{lemma}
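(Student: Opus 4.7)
The plan is to reduce the statement directly to Lemma \ref{k.0} (the Fidler--Yuster result) applied to the induced subgraph $D[K(C)]$. The hypothesis already tells us that $D[K(C)]$ is a tournament missing a matching, so the only thing that needs to be checked is that the cycle $C$, viewed inside $D[K(C)]$, is still a directed cycle of the dependency digraph $\Delta(D[K(C)])$ with the same orientation.

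Concretely, I would first observe that each $a_ib_i$ is still a missing edge of $D[K(C)]$ (since missing edges are preserved when we restrict to a subset of vertices containing both endpoints). Next I would verify that the losing relation $a_ib_i \rightarrow a_{i+1}b_{i+1}$ in $\Delta(D)$ forces the same relation in $\Delta(D[K(C)])$: the required arcs $a_i \rightarrow a_{i+1}$ and $b_i \rightarrow b_{i+1}$ are unchanged, and the conditions $a_{i+1} \notin N^+_D(b_i) \cup N^{++}_D(b_i)$ and $b_{i+1} \notin N^+_D(a_i) \cup N^{++}_D(a_i)$ only become easier to satisfy after restriction, because
\[
N^+_{D[K(C)]}(v) \subseteq N^+_D(v) \quad \text{and} \quad N^{++}_{D[K(C)]}(v) \subseteq N^{++}_D(v)
\]
for every $v \in K(C)$. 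Thus $C$ is indeed a directed cycle of $\Delta(D[K(C)])$ oriented exactly as in the statement.

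Once this reduction is in place, the hypothesis that $D[K(C)]$ is a tournament missing a matching allows me to invoke Lemma \ref{k.0} verbatim on $D[K(C)]$, yielding $a_k \rightarrow a_1$ and $b_k \rightarrow b_1$ when $k$ is odd, and $a_k \rightarrow b_1$ and $b_k \rightarrow a_1$ when $k$ is even. I expect no real obstacle here; the only subtle point to be careful about is the monotonicity of the second out-neighborhood under vertex deletion, which is exactly what makes losing relations persist after restriction.
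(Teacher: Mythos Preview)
Your proposal is correct and follows exactly the approach the paper takes: the paper simply remarks, just before stating Lemma~\ref{k.1}, that a directed cycle $C$ in $\Delta(D)$ remains a directed cycle in $\Delta(D[K(C)])$, and then invokes Lemma~\ref{k.0}. Your write-up supplies the details behind that remark (persistence of missing edges and monotonicity of $N^+$ and $N^{++}$ under restriction), which the paper leaves implicit.
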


\begin{proposition} \label{DC-int}
Let $D$ be a tournament missing disjoints paths of length 2 and let $\Delta$ denotes its dependency digraph. If $C$ is a double cycle in $\Delta$, then $K(C)$ is an interval of $D$. That is, for all $ u,v \in K(C)$, we have $N^+(u)\setminus K(C) = N^+(v)\setminus K(C)$ and $N^-(u)\setminus K(C) = N^-(v)\setminus K(C)$.
\end{proposition}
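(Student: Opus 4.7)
I would argue by contradiction: assume there exist $w \in V(D) \setminus K(C)$ and $u,v \in K(C)$ with $u \to w$ and $w \to v$. Since $w$ is a whole vertex with respect to $K(C)$, every edge from $w$ to a vertex of $K(C)$ is an arc of $D$, so I can define $f\colon K(C) \to \{+,-\}$ by $f(x) = +$ when $x \to w$ and $f(x) = -$ when $w \to x$. The strategy is then to show that $f$ must be constant on $K(C)$, contradicting $f(u) \ne f(v)$.

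The first ingredient is to extract propagation rules for $f$ from the losing relations of $C$. A losing relation $e_1 \to e_2$ with oriented arcs $a \to x$ and $b \to y$ gives $y \notin N^+(a) \cup N^{++}(a)$ and $x \notin N^+(b) \cup N^{++}(b)$, and applied to the whole vertex $w$ each such non-membership becomes an implication of the form ``$a \to w \Rightarrow y \to w$'' (otherwise $w$ would witness $y \in N^{++}(a)$). Unpacking the four losing relations at step $i$ of the double cycle under the standard orientation yields
\[ f(a_{i-1}) = +\ \text{or}\ f(c_{i-1}) = + \;\Rightarrow\; f(b_i) = +, \qquad f(b_{i-1}) = + \;\Rightarrow\; f(a_i) = f(c_i) = +, \]
together with the contrapositives, which propagate $f = -$ backward along $C$; steps carrying a crossed orientation produce the analogous shifted implications, for instance $f(a_{i-1}) = + \Rightarrow f(a_i) = f(c_i) = +$.

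I would then run these implications around $C$. When $k$ is odd, forward propagation starting from $u$ reaches every vertex of $K(C)$, so $f(v) = +$ must hold, contradicting $f(v) = -$. When $k$ is even, the implication graph of the standard rules splits into two complementary components, and forward/backward propagation from $u$ and $v$ alone is consistent with a non-constant $f$; here the argument must invoke the global structure of the cycle. The key observation is that not all $k$ steps of $C$ can be standard: the wrap-around non-memberships combine with arcs that other steps already force (e.g.\ $b_{i+1}\to a_i$-type arcs) to produce a $2$-path such as $b_2 \to a_1 \to a_3$ which would violate a constraint of the form $a_3 \notin N^{++}(b_2)$. Consequently at least one step of $C$ has a crossed orientation, and its extra non-memberships (of the form $a_i \notin N^{++}(a_{i-1})$, $c_i \notin N^{++}(c_{i-1})$ and $a_i \notin N^{++}(c_{i-1})$) bridge the two propagation components and produce the contradiction.

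The main obstacle is this even-$k$ case, which requires a careful bookkeeping of which steps of $C$ are standard and which are crossed, and tracking how the two families of non-memberships interact with the forward and backward flows of $f$. Once $N^+(u) \setminus K(C) = N^+(v) \setminus K(C)$ is proved for all $u, v \in K(C)$, the same argument applied in the reverse digraph (which has its own double cycle obtained by reversing all arcs of $C$) yields $N^-(u) \setminus K(C) = N^-(v) \setminus K(C)$, completing the proof that $K(C)$ is an interval of $D$.
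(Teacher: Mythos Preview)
Your propagation idea is exactly the paper's approach: for $w\notin K(C)$ the losing relation $x_iy_i\to x_{i+1}y_{i+1}$ (with $x_i\to x_{i+1}$, $y_i\to y_{i+1}$) forces $x_i\to w\Rightarrow y_{i+1}\to w$, and chaining these inclusions around the cycle shows all the sets $N^+(\cdot)\setminus K(C)$ coincide. The paper runs this separately on the two simple cycles $C_1$ (on the edges $a_ib_i$) and $C_2$ (on the edges $b_ic_i$) and then combines, whereas you try to handle all three families $a_i,b_i,c_i$ at once; either packaging is fine.

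The genuine gap is your even-$k$ case. You correctly observe that under an all-``standard'' orientation the implication graph would split into two components, and you assert that therefore at least one step of $C$ must be crossed. This assertion is true, but your justification is not a proof: the arc $b_2\to a_1$ does follow (from $b_2\notin N^+(a_1)$), and for $k=4$ one can indeed push this to a contradiction, but the sketch ``produce a $2$-path such as $b_2\to a_1\to a_3$ which would violate $a_3\notin N^{++}(b_2)$'' only yields $a_3\to a_1$, and iterating gives the relations $a_{i+2}\to a_i$ for all $i$, which for general even $k$ is merely a directed cycle on the odd-indexed $a_i$'s and not a contradiction by itself. The paper closes this gap by invoking Lemma~\ref{k.1} (the Fidler--Yuster parity lemma): once one relabels so that $x_i\to x_{i+1}$ for $i<k$, the closing step satisfies $x_k\to y_1$ when $k$ is even, and this crossed closing arc is precisely what bridges your two propagation components. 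You should either cite that lemma or give a full inductive proof of it; the hand-wave you have now does not suffice.

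A minor point: you do not need the reverse digraph for the $N^-$ statement. Since every $w\notin K(C)$ is adjacent to every vertex of $K(C)$, the equality $N^+(u)\setminus K(C)=N^+(v)\setminus K(C)$ immediately gives $N^-(u)\setminus K(C)=N^-(v)\setminus K(C)$ by complementation.
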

\begin{proof}
Let $C=a_1b_1c_1, \ldots, a_kb_kc_k$ be a double cycle in $\Delta$. For all $i \in [1,k-1]$, set $\{{x_i,y_i}\}=\{{a_i,b_i}\}$ so that $x_i \rightarrow x_{i+1}$ and $y_i \rightarrow y_{i+1}$. We have $C_1=x_1y_1,\ldots,x_ky_k$ is a cycle in $\Delta$. Note that for all $w \notin K(C)$, we have $w$ is adjacent to every vertex in $K(C)$.

If $x_1 \rightarrow w$ for some $w \notin K(C)$, then $y_2 \rightarrow w$ since otherwise $x_1 \rightarrow w \rightarrow y_2$, which contradicts the fact that $x_1y_1 \rightarrow x_2y_2$. So $N^+(x_1) \setminus K(C) \subseteq N^+(y_2) \setminus K(C)$. By applying this argument to every losing relation in $C$, we get $N^+(x_i) \setminus K(C) \subseteq N^+(y_{i+1}) \setminus K(C)$ for all $ 1 \leq i \leq k-1$. Similarly, for all $ 1 \leq i \leq k-1$, we have $N^+(y_i) \setminus K(C) \subseteq N^+(x_{i+1}) \setminus K(C)$. If $k$ is even, then $x_k \rightarrow y_1$ and $y_k \rightarrow x_1$ by Lemma \ref{k.1}. Hence we obtain that $N^+(y_k) \setminus K(C) \subseteq N^+(y_1) \setminus K(C)$ and $N^+(x_k) \setminus K(C) \subseteq N^+(x_1) \setminus K(C)$. It follows that $N^+(x_1) \setminus K(C) \subseteq N^+(y_2) \setminus K(C) \subseteq\cdots \subseteq N^+(y_k) \setminus K(C) \subseteq N^+(y_1) \setminus K(C) \subseteq N^+(x_2) \setminus K(C)\subseteq\cdots\subseteq N^+(x_k) \setminus K(C) \subseteq N^+(x_1) \setminus K(C)$. Therefore all inclusions are equalities. If $k$ is odd, then $x_k \rightarrow x_1$ and $y_k \rightarrow y_1$ by Lemma \ref{k.1}. Hence $N^+(x_k) \setminus K(C) \subseteq N^+(y_1) \setminus K(C)$ and $N^+(y_k) \setminus K(C) \subseteq N^+(x_1) \setminus K(C)$. It follows that $N^+(x_1) \setminus K(C) \subseteq N^+(y_2) \setminus K(C) \subseteq\cdots\subseteq N^+(x_k) \setminus K(C) \subseteq N^+(y_1) \setminus K(C) \subseteq N^+(x_2) \setminus K(C)\subseteq\cdots\subseteq N^+(y_k) \setminus K(C) \subseteq N^+(x_1) \setminus K(C)$. Thus all inclusions are equalities. Therefore for all $ u, v \in K(C_1)$, we have $N^+(u)\setminus K(C)=N^+(v)\setminus K(C)$. In the same manner we can see that $N^-(u)\setminus K(C)=N^-(v)\setminus K(C)$ for all $ \ u,v \in K(C_1)$.

Likewise, we set $\{{x'_i,y'_i}\}=\{{b_i,c_i}\}$, where $C_2=x'_1y'_1,\ldots,x'_ky'_k$ is a cycle in $\Delta$ so that $x'_i \rightarrow x'_{i+1}$ and $y'_i \rightarrow y'_{i+1}$. Similar considerations apply to $C_2$. Thus for all $ \ u, v \in K(C_2)$, we have $N^+(u)\setminus K(C)=N^+(v)\setminus K(C)$ and $N^-(u)\setminus K(C)=N^-(v)\setminus K(C)$. It follows that $N^+(u)\setminus K(C)=N^+(v)\setminus K(C)$ and $N^-(u)\setminus K(C)=N^-(v)\setminus K(C)$ for all $ \ u, v \in K(C)$. Therefore $K(C)$ is an interval of $D$.
\end{proof}

\begin{theorem}
Let $D$ be a tournament missing disjoints paths of length 2 and let $\Delta$ denote its dependency digraph.
If $d_{\Delta}^+(ab)=d_{\Delta}^-(ab)=2$ for all $ab \in V(\Delta)$, then $D$ has a vertex satisfying the SNP.
\end{theorem}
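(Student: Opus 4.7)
The plan is to reduce the problem to the interval pieces $K(C)$ arising from the double cycles of $\Delta$, locate an SNP vertex inside each such interval via the preceding propositions, and then glue these local witnesses together by invoking Theorem \ref{th_good_completion}.

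First I would verify that under the hypothesis $d_{\Delta}^+(ab) = d_{\Delta}^-(ab) = 2$ for every missing edge, the dependency digraph $\Delta$ is a disjoint union of double cycles. The building block is Lemma \ref{lm3}: if $ab \rightarrow xy$ and both endpoints $ab$ and $xy$ have in- and out-degree $2$, then the whole missing path containing $ab$ loses to the whole missing path containing $xy$. Combined with Proposition \ref{p.in-out_leq_2}, iterating the losing relations along any walk in $\Delta$ therefore closes up into a double cycle $C = a_1b_1c_1, \ldots, a_kb_kc_k$, so every connected component of $\Delta$ has this form.

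Next I would set up the hypotheses of Theorem \ref{th_good_completion}. For any whole vertex $x$, $J(x) = \{x\}$ is a trivial interval in which $x$ has the SNP. For a non-whole vertex $x$ lying on a double cycle $C$, the connected component of the interval graph $\mathcal{I}_D$ containing $C$ is $C$ itself (double cycles are self-contained in $\Delta$), so $J(x) = K(C)$. Proposition \ref{DC-int} tells us that $K(C)$ is an interval of $D$, verifying the interval condition. Finally, Propositions \ref{a} and \ref{b} produce an explicit vertex $p \in K(C)$ (one of $a_s$, $c_s$, or $b_t$) with $|N^+_{D[K(C)]}(p)| \le |N^{++}_{D[K(C)]}(p)|$, that is, $p$ has the SNP in $D[J(x)]$. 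Hence condition (ii) of Theorem \ref{th_good_completion} holds for every vertex of $D$.

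Applying Theorem \ref{th_good_completion} then yields a good completion $D'$ of $D$ such that for any feed vertex $f$ of a good median order of $D'$, there exists $p \in J_{D'}(f)$ which has the SNP in $D'$ and in $D$, which is exactly the desired conclusion. I do not expect a serious obstacle here, since all the heavy lifting is already packaged in the earlier results; the one place to be careful is checking that $J(x) = K(C)$ really does hold for non-whole $x$, i.e.\ that distinct double cycles are never merged by the interval graph $\mathcal{I}_D$. This follows because each $K(C)$ is already an interval of $D$ (by Proposition \ref{DC-int}), so the $K$-sets of distinct double cycles are disjoint and thus lie in distinct components of $\mathcal{I}_D$.
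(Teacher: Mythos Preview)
Your plan is correct and lands on the same ingredients as the paper (double-cycle decomposition, Proposition~\ref{DC-int}, Propositions~\ref{a}/\ref{b}), but the paper takes a shorter route at the final step. Once every $J(x)$ is shown to be an interval of $D$, the digraph $D$ is \emph{already} a good digraph, so the paper applies Theorem~\ref{th1} directly to a good median order of $D$: the feed vertex $f$ satisfies $|N^+(x)\setminus J(f)|\le |G_L\setminus J(f)|$ for every $x\in J(f)$, and combining this with an SNP vertex inside $D[J(f)]$ finishes. Your route through Theorem~\ref{th_good_completion} also works---in this situation the ``good completion'' $D'$ is $D$ itself and the proof collapses to Case~1 of Theorem~\ref{th_good_completion}, which is exactly the Theorem~\ref{th1} argument---but it invokes more machinery than needed.

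One small point: your justification that the $K$-sets of distinct double cycles are disjoint (``because each $K(C)$ is already an interval of $D$'') is not the right reason; being an interval does not by itself force disjointness. The disjointness is immediate from the hypothesis that $D$ misses \emph{disjoint} paths of length~2, so the vertex sets of distinct missing paths---hence of distinct double cycles---cannot overlap. With that fixed, $J(x)=K(C)$ follows and the rest of your argument goes through.
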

\begin{proof}
Since $d_{\Delta}^+(ab)=d_{\Delta}^-(ab)=2$, the dependency digraph $\Delta$ of $D$ is composed of double cycles only.
For every $v \in V(D)$ we have $J(v)=\{{v}\}$ or $J(v)=K(C)$ for some double cycle $C$ in $\Delta$. For every double cycle $C$ in $\Delta$, we have $K(C)$ is an interval of $D$ by Proposition \ref{DC-int}. Hence $D$ is a good digraph. So we can apply Theorem \ref{th1}. Let $L$ be a good median order of $D$ and let $f$ denote its feed vertex.\\
\indent If $J(f)=K(C)$ for some double cycle $C$ in $\Delta$, then there exists a vertex $v$ satisfying the SNP in $D[J(f)]$ by Propositions \ref{a} and \ref{b}. Therefore, by Theorem \ref{th1}, $v$ has the SNP in $D$.\\
\indent If $J(f)=\{f\}$, then clearly $f$ has the SNP in $D[J(f)]$. Thus by Theorem \ref{th1}, $f$ has the SNP in $D$.
\end{proof}
\subsection{SSNC in Tournaments missing disjoint paths of length at most 2}
\label{ssnc_leq2}

\begin{lemma}\cite{fidler2007remarks}
\label{l.cycle.interval0}
Let $D$ be a tournament missing a matching. If $C=a_1b_1,\ldots,a_kb_k$ is a directed cycle of $\Delta(D)$, then $K(C)$ is an interval of $D$.
\end{lemma}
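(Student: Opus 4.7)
The plan is to imitate the argument we just carried out for Proposition \ref{DC-int}, only simpler: since $D$ misses a \emph{matching}, any vertex $w\notin K(C)$ is adjacent to every vertex of $K(C)$, which is exactly the property that made the chain-of-inclusions argument go through in the double-cycle case.

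First, by relabelling if necessary I would order the missing edges of $C$ so that the losing relations $a_ib_i\rightarrow a_{i+1}b_{i+1}$ are witnessed by $a_i\rightarrow a_{i+1}$ and $b_i\rightarrow b_{i+1}$ for each $i\in[1,k-1]$. The definition of $\rightarrow$ in $\Delta$ then gives the crucial forbidden pairs: $b_{i+1}\notin a_i^+\cup a_i^{++}$ and $a_{i+1}\notin b_i^+\cup b_i^{++}$.

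The next step is the step-by-step inclusion argument. Fix $w\in V(D)\setminus K(C)$; since $D$ misses only a matching, $w$ is adjacent to every element of $K(C)$. If $a_i\rightarrow w$, then $a_i\rightarrow w\rightarrow b_{i+1}$ is impossible because $b_{i+1}\notin a_i^{++}$, so $b_{i+1}\rightarrow w$; analogously $b_i\rightarrow w$ forces $a_{i+1}\rightarrow w$. This yields
\[
N^+(a_i)\setminus K(C)\subseteq N^+(b_{i+1})\setminus K(C),\qquad N^+(b_i)\setminus K(C)\subseteq N^+(a_{i+1})\setminus K(C)
\]
for every $i\in[1,k-1]$.

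To close the cycle I invoke Lemma \ref{k.0}, which describes how the last losing relation $a_kb_k\rightarrow a_1b_1$ is oriented. If $k$ is odd, then $a_k\rightarrow a_1$ and $b_k\rightarrow b_1$, and the forbidden pairs $b_1\notin a_k^{++}$ and $a_1\notin b_k^{++}$ give $N^+(a_k)\setminus K(C)\subseteq N^+(b_1)\setminus K(C)$ and $N^+(b_k)\setminus K(C)\subseteq N^+(a_1)\setminus K(C)$; stringing the inclusions along the cycle $a_1\to b_2\to a_3\to\cdots\to b_k\to a_1$ and $b_1\to a_2\to b_3\to\cdots\to a_k\to b_1$ shows every inclusion is an equality. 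If $k$ is even, Lemma \ref{k.0} gives $a_k\rightarrow b_1$ and $b_k\rightarrow a_1$, yielding instead $N^+(a_k)\setminus K(C)\subseteq N^+(a_1)\setminus K(C)$ and $N^+(b_k)\setminus K(C)\subseteq N^+(b_1)\setminus K(C)$, and the inclusions again cycle through all $2k$ sets and collapse to equalities. Repeating the same argument in reverse (using that $a_{i+1}\notin b_i^-\cup b_i^{--}$ type conditions follow symmetrically from the losing relations) handles $N^-$, so $N^+(u)\setminus K(C)=N^+(v)\setminus K(C)$ and $N^-(u)\setminus K(C)=N^-(v)\setminus K(C)$ for all $u,v\in K(C)$, i.e., $K(C)$ is an interval of $D$.

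The only mild obstacle is the parity bookkeeping in the closing step: one must check that in each of the two parities the inclusions chain into a single cycle covering all $2k$ sets $N^{\pm}(a_i)\setminus K(C), N^{\pm}(b_i)\setminus K(C)$, rather than splitting into two disjoint cycles that might remain merely comparable. This is exactly the role of Lemma \ref{k.0}, and it is the same verification that was made in the proof of Proposition \ref{DC-int}.
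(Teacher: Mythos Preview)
The paper does not give its own proof of this lemma; it is quoted from Fidler--Yuster \cite{fidler2007remarks} without argument. Your proof is correct and is precisely the single-cycle specialization of the paper's proof of Proposition~\ref{DC-int}, so it is exactly the approach the surrounding material suggests. One small remark: the $N^-$ half does not require a ``reverse'' argument invoking $b_i^{--}$-type conditions (the losing relation does not directly give those); once you know all the sets $N^+(u)\setminus K(C)$ coincide and every $w\notin K(C)$ is adjacent to every vertex of $K(C)$, the equality of the sets $N^-(u)\setminus K(C)$ follows immediately by complementation in $V(D)\setminus K(C)$.
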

 We need to make a slight modification in the statement of Lemma \ref{l.cycle.interval0}.
\begin{lemma}
\label{l.cycle.interval}
Let $D$ be a digraph. Let $\{a_1b_1,\ldots,a_kb_k\}$ be a set of disjoint missing edges in $D$ such that $C=a_1b_1,\ldots,a_kb_k$ is a directed cycle of $\Delta(D)$. If $K(C)=J(x)$ for some $x \in V(D)$, then $K(C)$ is an interval of $D$.
\end{lemma}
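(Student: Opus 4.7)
The plan is to adapt the proof of the matching-case analogue Lemma~\ref{l.cycle.interval0}, with the equality $K(C)=J(x)$ playing the role of the tournament-missing-a-matching assumption. First I fix an arbitrary $w\in V(D)\setminus K(C)$; by the observation of Section~2 that ``if $x\notin J(f)$ then $x$ is adjacent to every vertex of $J(f)$'', the hypothesis $K(C)=J(x)$ guarantees that $w$ is adjacent to every vertex of $K(C)$. Hence for every $u\in K(C)$ exactly one of $u\to w$ or $w\to u$ holds, and it suffices to show this direction is the same for all $u\in K(C)$; the dual statement for in-neighbourhoods is obtained by arc-reversal.

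For each $i\in\{1,\ldots,k-1\}$ the losing relation $a_ib_i\to a_{i+1}b_{i+1}$ (with labels so that $a_i\to a_{i+1}$ and $b_i\to b_{i+1}$) gives $b_{i+1}\notin a_i^+\cup a_i^{++}$ and $a_{i+1}\notin b_i^+\cup b_i^{++}$. Plugging in $w$ yields the forward implications
\begin{equation*}
a_i\to w\ \Longrightarrow\ b_{i+1}\to w,\qquad b_i\to w\ \Longrightarrow\ a_{i+1}\to w,
\end{equation*}
since otherwise $w$ would witness $b_{i+1}\in a_i^{++}$ (resp.\ $a_{i+1}\in b_i^{++}$).

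To close the chain around the cycle I apply Lemma~\ref{k.1} to $C$ with matching $\{a_ib_i\}_{i=1}^k$: it pins down the orientation of the two closing arcs as $a_k\to a_1,\,b_k\to b_1$ if $k$ is odd, and $a_k\to b_1,\,b_k\to a_1$ if $k$ is even. The closing losing relation $a_kb_k\to a_1b_1$ then provides two analogous implications that ``glue'' the end of each chain of the previous step to the start of the other; iterating along the two alternating sequences $a_1,b_2,a_3,b_4,\ldots$ and $b_1,a_2,b_3,a_4,\ldots$ shows that whenever any element of $K(C)$ points to $w$ all of them do, and conversely. This gives $N^+(u)\setminus K(C)=N^+(v)\setminus K(C)$ for all $u,v\in K(C)$; the analogous equality for $N^-$ follows by arc-reversal, so $K(C)$ is an interval of $D$.

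The delicate step I anticipate is the application of Lemma~\ref{k.1}, which as stated requires $D[K(C)]$ itself to be a tournament missing a matching, whereas our hypothesis $K(C)=J(x)$ only forbids missing edges between $K(C)$ and the outside and a priori permits further missing edges inside $K(C)$. The resolution is that the four arcs $a_ka_1,\,a_kb_1,\,b_ka_1,\,b_kb_1$ governing the closing orientation do not lie in the disjoint family $\{a_ib_i\}$ and are therefore genuine arcs of $D$; a short inspection of the Fidler--Yuster parity argument shows that it depends only on the losing relations of the $a_ib_i$ together with the orientations of these four arcs, and so its conclusion continues to hold under our weaker hypothesis.
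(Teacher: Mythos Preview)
Your approach differs from the paper's. Rather than rerunning the chain argument, the paper gives a one-step reduction: orient every missing edge of $D\setminus K(C)$ arbitrarily; since $K(C)=J(x)$ guarantees there are no missing edges between $K(C)$ and its complement, the only missing edges of the resulting $D'$ lie inside $K(C)$, so $D'$ is a tournament missing the matching $\{a_ib_i\}$. Because no arc incident to $K(C)$ was added, the losing relations on $C$ persist and $C$ is still a directed cycle of $\Delta(D')$. Lemma~\ref{l.cycle.interval0} then makes $K(C)$ an interval of $D'$, and since $N^{\pm}_{D'}(u)=N^{\pm}_D(u)$ for every $u\in K(C)$, it is an interval of $D$ as well.

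Your resolution of the ``delicate step'', however, has a genuine gap. The assertion that the four pairs $a_ka_1,\,a_kb_1,\,b_ka_1,\,b_kb_1$ are arcs of $D$ ``because they do not lie in the disjoint family $\{a_ib_i\}$'' is a non sequitur: as you yourself observe, $K(C)=J(x)$ only excludes missing edges between $K(C)$ and the outside, so nothing in the stated hypotheses prevents these pairs (or the pairs $a_ib_{i+1},\,b_ia_{i+1}$ along the cycle) from being additional missing edges inside $K(C)$. The Fidler--Yuster parity argument behind Lemma~\ref{k.1} actually uses all of those internal adjacencies, not just the four closing ones, so your ``short inspection'' underestimates what is needed. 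Without the correct parity at the closing step your implication chain can split into two disjoint $k$-cycles rather than a single $2k$-cycle, and the interval property does not follow. In fairness, the paper's reduction also tacitly assumes that the only missing edges inside $K(C)$ are the $a_ib_i$ (otherwise $D'$ is not a tournament missing a matching); this holds automatically in the lemma's sole application in Section~\ref{ssnc_leq2}, where $C$ is composed of isolated missing edges of a disjoint-paths missing graph.
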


\begin{proof}
Since $K(C)=J(x)$, every vertex in $D \setminus K(C)$ is adjacent to each vertex in $K(C)$. We orient all the missing edges in $D \setminus K(C)$. Hence we obtain a new digraph $D'$ such that $V(D')=V(D)$. It is clear that $D'$ is a tournament missing a matching. Furthermore, we have $C$ is a directed cycle in $\Delta(D')$, since the losing relations of $C$ do not modify. Hence, by Lemma \ref{l.cycle.interval0}, $K(C)$ is an interval of $D'$; this means that for all $ u,v \in K(C)$, we have $N_{D'}^+(u)\setminus K(C) = N_{D'}^+(v)\setminus K(C)$ and $N_{D'}^-(u)\setminus K(C) = N_{D'}^-(v)\setminus K(C)$. But, for all $u \in K(C)$, we have $N_{D'}^+(u)=N_D^+(u)$  and $N_{D'}^-(u)=N_D^-(u)$ since $u$ is not incident to any new arc. Hence for all $ u,v \in K(C)$, we have $N_{D}^+(u)\setminus K(C) = N_{D}^+(v)\setminus K(C)$ and $N_{D}^-(u)\setminus K(C) = N_{D}^-(v)\setminus K(C)$. Therefore $K(C)$ is an interval of $D$.
\end{proof}

\begin{lemma}\cite{ghazal2015remark}
\label{l.cycle.snp}
Let $\{a_1b_1,\ldots,a_kb_k\}$ be a set of disjoint missing edges in $D$. If $C=a_1b_1,\ldots,a_kb_k$ is a directed cycle of $\Delta$, then every vertex in $K(C)$ satisfy the SNP in $D[K(C)]$.
\end{lemma}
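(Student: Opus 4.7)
The plan is to compute $|N^+(v)|$ and $|N^{++}(v)|$ exactly in $D[K(C)]$ for every $v \in K(C)$, and show that both equal $k-1$. Fix the labeling so that $a_i \to a_{i+1}$ and $b_i \to b_{i+1}$ for $1 \le i \le k-1$ (as in Lemma \ref{k.1}). First I would establish a matching-edge analog of Lemma \ref{lm4} and Corollary \ref{c.relation}: by induction along the cycle $C$, for every $j \neq i$ in $[1,k]$ the four conditions $a_j \to a_i$, $b_i \to a_j$, $a_i \to b_j$ and $b_j \to b_i$ are pairwise equivalent. The induction step mimics Lemma \ref{lm4}(1), using the constraints $a_{\ell+1} \notin b_\ell^+ \cup b_\ell^{++}$ and $b_{\ell+1} \notin a_\ell^+ \cup a_\ell^{++}$ supplied by each losing relation $a_\ell b_\ell \to a_{\ell+1}b_{\ell+1}$ in $C$.

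A direct corollary is that, for any fixed $v = a_1$ (by symmetry the argument is the same for any vertex of $K(C)$) and each $i \neq 1$, exactly one of $a_i, b_i$ lies in $N^+(a_1)$ while the other lies in $N^-(a_1)$; hence $|N^+(a_1)| = |N^-(a_1)| = k-1$ in $D[K(C)]$. The upper bound $|N^{++}(a_1)| \le k-1$ is then immediate: the inclusion $N^{++}(a_1) \subseteq K(C) \setminus (N^+(a_1) \cup \{a_1\}) = N^-(a_1) \cup \{b_1\}$ gives a set of size $k$, and the losing relation $a_1 b_1 \to a_2 b_2$ directly excludes $b_2$ from $N^{++}(a_1)$.

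The heart of the argument is the matching lower bound $(N^-(a_1) \cup \{b_1\}) \setminus \{b_2\} \subseteq N^{++}(a_1)$. For $u = b_1$, the cross arc $a_2 \to b_1$ (produced by the losing relation $a_1b_1 \to a_2b_2$, since $a_2 \notin b_1^+$) together with $a_2 \in N^+(a_1)$ gives the witness. For each $u \in N^-(a_1) \setminus \{b_2\}$, a witness $w \in N^+(a_1)$ with $w \to u$ must be exhibited using the cycle arcs $a_\ell \to a_{\ell+1}$, $b_\ell \to b_{\ell+1}$ and the cross arcs $a_{\ell+1} \to b_\ell$, $b_{\ell+1} \to a_\ell$. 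This is where the main obstacle lies: the "immediate" candidates $a_{i-1}, b_{i-1}$ may themselves lie in $N^-(a_1)$ rather than in $N^+(a_1)$, so one must appeal to the forbidden-second-out-neighborhood conditions of every losing relation, and crucially to the cycle closure $a_k b_k \to a_1 b_1$ (whose exact pairing depends on the parity of $k$ via Lemma \ref{k.1}), to rule out the orientations in which no suitable witness exists and thereby force a witness further along $C$.

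Combining the three steps gives $|N^+(a_1)| = |N^{++}(a_1)| = k-1$, so $a_1$ satisfies the SNP in $D[K(C)]$. Since the pair-swap $a_i \leftrightarrow b_i$ is an automorphism of $D[K(C)]$ (an immediate consequence of the equivalences in Step~1) and cyclic relabeling $i \mapsto i+s$ preserves the hypothesis of the lemma, the same equality holds at every vertex of $K(C)$, proving that each vertex of $K(C)$ satisfies the SNP in $D[K(C)]$.
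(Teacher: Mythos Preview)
The paper does not supply its own proof of this lemma; it is quoted verbatim from \cite{ghazal2015remark}. There is thus no in-paper argument to compare against directly. Your plan is the natural matching-edge analog of what the paper \emph{does} prove for double cycles (Lemma~\ref{lm4}, Corollary~\ref{c.relation}, Lemma~\ref{lm6}), and it is correct.

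One point deserves sharpening. You describe Step~4 (the inclusion $(N^-(a_1)\cup\{b_1\})\setminus\{b_2\}\subseteq N^{++}(a_1)$) as the main obstacle, worrying that the immediate predecessors may lie in $N^-(a_1)$ and that one must chase a witness ``further along $C$'' via the cycle closure. In fact the relevant immediate predecessor \emph{always} works, by exactly the one-line contradiction used in the proof of Lemma~\ref{lm6}. Take $u=a_i\in N^-(a_1)$ with $i\ge 3$; your Step~1 equivalence gives $a_1\to b_i$. The losing relation $a_{i-1}b_{i-1}\to a_ib_i$ forbids $b_i\in a_{i-1}^{++}$, so $a_{i-1}\to a_1$ is impossible (else $a_{i-1}\to a_1\to b_i$). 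Hence $a_1\to a_{i-1}\to a_i$, and $a_i\in N^{++}(a_1)$. The case $u=b_i$ with $i\ge 3$ is symmetric, with $b_{i-1}$ as witness; the indices $i\le 2$ are already disposed of ($a_2\in N^+(a_1)$, $b_2$ is the excluded vertex, and $b_1$ is reached via $a_2$). The cycle closure is genuinely needed, but only in Step~1, to upgrade the one-sided implications of the Lemma~\ref{lm4} analog to the two-sided equivalences of the Corollary~\ref{c.relation} analog; Step~4 does not require it again.

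Your symmetry wrap-up is fine: the simultaneous swap $a_i\leftrightarrow b_i$ is an automorphism of $D[K(C)]$ by the Step~1 equivalences, and for an arbitrary $a_j$ one reruns the argument with $j$ in place of $1$ (for even $k$, composing the cyclic shift with the swap restores the convention $a_i\to a_{i+1}$ across the wrap, cf.\ Lemma~\ref{k.1}).
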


\begin{theorem}
Let $D$ be a digraph missing disjoint paths of length at most 2. If the missing disjoint paths of length 2 form double cycles in $\Delta(D)$, then $D$ has a vertex with the SNP.
\end{theorem}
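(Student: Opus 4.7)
My plan is to invoke Theorem \ref{th_good_completion}, so the task reduces to verifying condition (i) or (ii) for every vertex $x \in V(D)$. The first preparatory step is to analyze the structure of $\Delta = \Delta(D)$. Every 2-length-path missing edge that sits in a double cycle already has $d_\Delta^+ = d_\Delta^- = 2$, so Proposition \ref{p.in-out_leq_2} forbids any further $\Delta$-neighbors; in particular, there can be no losing relation between a matching missing edge and a 2-length-path missing edge. The components of $\Delta$ therefore split into the prescribed double cycles together with components that use only matching edges. For the latter, Lemmas \ref{lm1}--\ref{lm2} combined with the matching structure force maximum in- and out-degree $\leq 1$ in $\Delta$ (two out-neighbors would have to share a vertex, which disjoint matching edges cannot), so these components are vertex-disjoint directed paths and directed cycles, exactly as in the pure-matching case.

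Since the missing graph is a disjoint union of paths of length at most $2$, each non-whole vertex lies on a unique missing path and hence in a unique $\Delta$-component, and distinct $\Delta$-components have disjoint vertex sets $K(\cdot)$. Thus for every non-whole $x$, the $\mathcal{I}_D$-component $\xi$ containing $x$'s $\Delta$-component $C$ consists of $C$ alone, giving $J(x) = K(C)$.

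I would then check (i) or (ii) in four cases. Whole vertices satisfy (ii) trivially. If $x$ lies on a matching edge whose $\Delta$-component is a directed path $P$, then $J(x) = K(P)$ and (i) holds. If that component is a directed cycle $C$, then (ii) holds via Lemma \ref{l.cycle.interval} for the interval property and Lemma \ref{l.cycle.snp} for the SNP inside $D[K(C)]$. Finally, if $x$ lies on a 2-length missing path inside a double cycle $C$, then no missing edge of $D$ crosses $K(C)$, so $D[K(C)]$ is a tournament missing exactly the disjoint 2-length paths of $C$, and $C$ remains a double cycle in $\Delta(D[K(C)])$; Propositions \ref{a} and \ref{b} then furnish a vertex of $K(C)$ with the SNP in $D[K(C)]$.

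The one step I expect to be the main obstacle is showing that $K(C)$ is an interval of $D$ in the double-cycle case, since Proposition \ref{DC-int} is stated for tournaments while here $D$ is only a general digraph. However, the sole use of the tournament hypothesis in that proof is that every vertex outside $K(C)$ is adjacent to every vertex inside $K(C)$; this still holds in our setting precisely because no missing edge crosses $K(C)$. Hence the proof of Proposition \ref{DC-int} transfers verbatim, (ii) holds in this final case, and Theorem \ref{th_good_completion} delivers a vertex of $D$ with the SNP.
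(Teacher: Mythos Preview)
Your proposal is correct and follows essentially the same route as the paper's own proof: both reduce to Theorem \ref{th_good_completion} by showing each $J(x)$ is either $K(P)$ for a directed path $P$ in $\Delta$ or an interval containing a vertex with the SNP in its induced subgraph, and both split the analysis into whole vertices, matching-edge paths, matching-edge cycles (via Lemmas \ref{l.cycle.interval} and \ref{l.cycle.snp}), and double cycles (via Propositions \ref{a}, \ref{b}, and \ref{DC-int}). The one substantive difference is in bridging the gap between Proposition \ref{DC-int}'s hypothesis (missing paths of length exactly $2$) and the present setting (length at most $2$): the paper temporarily orients all length-$1$ missing paths to obtain a digraph $D'$ to which Proposition \ref{DC-int} applies verbatim, then transfers the interval property back to $D$; you instead inspect the proof of Proposition \ref{DC-int} and observe that its only external requirement is that every vertex outside $K(C)$ be adjacent to every vertex inside, which holds since $K(C)=J(x)$. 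Your shortcut is valid (the application of Lemma \ref{k.1} inside that proof also goes through, since within $K(C)$ only length-$2$ paths are missing), though the paper's reduction has the advantage of applying the proposition as a black box without re-examining its proof.
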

\begin{proof}
For every $J(x)$ of $D$, we will show that $J(x)$ is an interval of $D$ containing a vertex with the SNP in $D[J(x)]$, or $J(x)=K(P)$ such that $P$ is a maximal directed path in $\Delta(D)$. Then we apply Theorem \ref{th_good_completion} to conclude that $D$ has a vertex with the SNP. For a whole vertex $x$, we have $J(x) = \{x\}$ and there nothing to prove. Recall that, by Proposition \ref{p.in-out_leq_2}, every missing edge of $D$ has in- and out-degree at most 2 in $\Delta(D)$. Moreover by Lemma~\ref{lm1}, if a missing edge $uv$ having out-degree 2 in $\Delta(D)$, then their out-neighbors are two missing edges which share a common vertex in $D$; in other words, the two out-neighbors of $uv$ form a missing path of length 2. Similarly, by Lemma \ref{lm2}, if $d_{\Delta}^-(uv) = 2$, then their two in-neighbors form a missing path of length 2. For every missing edge $ab$ containing in a double cycle $C$, we have $d_{\Delta}^-(ab)=d_{\Delta}^+(ab)=2$. Hence, the missing edge $ab$ has no in-neighbors and no out-neighbors outside of $C$. This means that $C$ is a connected component in $\Delta$. Furthermore, for all missing edges $uv$ not containing in $C$, we have $K(C) \cap \{u,v\} = \emptyset$ since $D$ is a digraph missing disjoint paths of length at most 2. Hence $K(C)=J(x)$ for some $x \in V(D)$. It is clear that $D[K(C)]$ is a digraph missing disjoint paths of length 2 where $C$ is also a double cycle in $\Delta(D[K(C)])$. So, by Proposition \ref{a}, there is a vertex with the SNP in $D[K(C)]$. By Proposition \ref{DC-int}, we can easily deduce that $K(C)$ is an interval of $D$. Actually, Proposition \ref{DC-int} asserts that $K(C)$ is an interval in case of digraph missing disjoint paths of length exactly 2. Here, since $K(C) = J(x)$ for some $x \in V(D)$, we can safely orient (arbitrary) the missing paths of length 1 without modifying the in- and out-neighborhoods as well as the losing relations within $K(C)$. Hence we obtain a new digraph $D'$ with $V(D') = V(D)$, which is a digraph missing disjoint paths of length 2, and $C$ is also a double cycle in $\Delta(D')$. Now, by Proposition \ref{DC-int}, $K(C)$ is an interval of $D'$. As $N_D^+(u) = N_{D'}^+(u)$ and $N_D^-(u) = N_{D'}^-(u)$, we get $K(C)$ is also an interval of $D$. Now, we return to the initial digraph $D$. Because any missing edge containing in a missing path of length 2 has no in-neighbors and no out-neighbors outside of its double cycle, we deduce that the remaining missing edges (which are missing paths of length 1) have in- and out-degrees at most 1 in $\Delta$. This means that these missing paths of length 1 form disjoint directed paths and directed cycles in $\Delta$. It is clear that if $Q$ is a directed path or a directed cycle in $\Delta$, then $K(Q) = J(x)$ for some $x \in V(D)$. For every directed cycle $C$ in $\Delta$, we have $K(C)$ is an interval of $D$ and has a vertex with the SNP in $D[K(C)]$ by Lemmas \ref{l.cycle.interval} and \ref{l.cycle.snp}. Now, we can apply Theorem \ref{th_good_completion} and deduce that $D$ has a vertex with the SNP.
\end{proof}

A natural question is to seek more than one vertex with the SNP. Havet and Thomass\'e used the sedimentation to exhibit a second vertex with the SNP in tournaments that do not have any sink. Recall that if $L$ is a good weighted median order of a good digraph $(D,\omega)$, then the sedimentation of $L$ is also a good weighted median order of $(D,\omega)$. Define now inductively $Sed^0(L) = L$ and $Sed^{q+1}(L) = Sed(Sed^q(L))$. If the process reaches a rank $q$ such that $Sed^q(L) = y_1\ldots y_n$ and $\omega(N^+(y_n) \setminus J(y_n)) < \omega(G_{{Sed}^q}(L) \setminus J(y_n))$, call the order $L$ \emph{stable}. Otherwise call $L$ \emph{periodic}. We will use these new orders to exhibit at least two vertices with the SNP in some cases.

\begin{theorem}\cite{ghazal2015remark}
Let $D$ be an oriented graph missing a matching and suppose that its dependency digraph $\Delta$ is composed of only directed cycles. If $D$ has no sink vertex, then it has at least two vertices with the SNP.
\end{theorem}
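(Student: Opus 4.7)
The plan is to extend Havet and Thomass\'e's sedimentation argument for Theorem \ref{t.tourn} to the good-digraph setting, using Theorem \ref{th1} in place of the tournament feed-vertex property and Lemma \ref{l.cycle.snp} to handle the nontrivial $J$-classes. First I would verify that $D$ is already a good digraph and needs no completion: because $D$ misses a matching, the endpoints of distinct missing edges are pairwise disjoint, so each connected component of $\mathcal{I}_D$ contains a single directed cycle $C$ of $\Delta$; for every $x\in V(D)$ we have either $J(x)=\{x\}$ (when $x$ is whole) or $J(x)=K(C)$ for the unique cycle containing a missing edge at $x$. Lemma \ref{l.cycle.interval} then yields that each $K(C)$ is an interval of $D$. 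I next fix a good weighted median order $L=x_1\ldots x_n$ of $D$ with feed vertex $f$, and split on $J(f)$.

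If $J(f)=K(C)$, I would show that every vertex of $K(C)$ has the SNP in $D$, producing $|K(C)|\ge 4$ SNP vertices outright. Lemma \ref{l.cycle.snp} yields $|N^+(x)\cap K(C)|\le|N^{++}(x)\cap K(C)|$ for each $x\in K(C)$. For the complementary halves, any $y\in G_L\setminus K(C)$ admits a witness $x_i$ with $f\to x_i\to y$; this $x_i$ cannot lie in $K(C)$, for otherwise the interval property forces $y\in N^+(f)$, contradicting $y\in G_L$. Hence $x_i\notin K(C)$, and the interval property applied again gives $x\to x_i$ for every $x\in K(C)$, so $y\in N^{++}(x)\setminus K(C)$. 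Combined with Theorem \ref{th1} this yields $|N^+(x)\setminus K(C)|\le|G_L\setminus K(C)|\le|N^{++}(x)\setminus K(C)|$, completing the SNP of every $x\in K(C)$.

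If $J(f)=\{f\}$, then Theorem \ref{th1} immediately gives $f$ the SNP, and I must produce a second SNP vertex via iterated sedimentation. Set $L_q=Sed^q(L)$ with feeds $f_q$. Whenever equality holds at step $q$, $Sed(L_q)=b_1\ldots b_r\,J(f_q)\,v_1\ldots v_s$ moves $f_q$ inward and the new feed is $v_s$; the no-sink hypothesis, together with $J(f_q)=\{f_q\}$ (the Case~A alternative already supplies the conclusion), forces $s\ge 1$, so $f_{q+1}\ne f_q$. If $L$ is periodic, the orbit $(f_q)$ cycles with period $\ge 2$, giving two distinct feeds and hence two SNP vertices. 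If $L$ is stable with strict rank $q_0\ge 1$, the feeds $f_0,\ldots,f_{q_0}$ are pairwise distinct by the same reasoning, and two SNP vertices again arise.

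The main obstacle is the residual sub-case in which $L$ is stable at $q_0=0$, i.e., $|N^+(f)|<|G_L|$ already holds, so $Sed(L)=L$ and iteration produces no new orders. I expect to resolve this by starting from a different good weighted median order: either a local perturbation of $L$ (swapping a bad vertex with a suitable $v_i$ while preserving the number of forward arcs) yields a good weighted median order $L'$ for which equality holds at step $0$, reducing to the previous case; or, mirroring Havet and Thomass\'e's original tournament argument, the surplus $|G_L|-|N^+(f)|\ge 1$ together with the absence of a sink directly exhibits a second SNP vertex inside $G_L\setminus N^+(f)\subseteq N^{++}(f)$ via an interval-structure counting argument. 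This last step is the technical crux where I expect to follow the original proof of Theorem \ref{t.tourn} most closely.
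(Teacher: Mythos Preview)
Your Case A (when $J(f)=K(C)$) matches the paper's argument and is fine. The gap is in Case B: you run sedimentation on $L$ itself, and this leaves the sub-case $q_0=0$ (strict inequality $|N^+(f)|<|G_L|$ at the very first step) genuinely unresolved. Your two proposed fixes do not work. There is no reason a ``local perturbation'' of a median order should produce another median order with equality at step $0$; median orders are extremal objects, and you cannot in general push them to the boundary case. And the surplus $|G_L|-|N^+(f)|\ge 1$ tells you only that $f$ has strict SNP; it does not by itself point to a second SNP vertex, and there is no counting argument of the kind you describe in Havet--Thomass\'e.

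The missing idea---which is exactly what Havet--Thomass\'e do for tournaments and what the paper (following \cite{ghazal2015remark}) does here---is to \emph{delete $f=x_n$ first} and run sedimentation on the good median order $L'=x_1\ldots x_{n-1}$ of $D\setminus\{x_n\}$. Every feed $y_{n-1}$ of $Sed^q(L')$ gains exactly one extra out-neighbour (namely $x_n$) when returning to $D$. If $L'$ is stable, the strict inequality at some step absorbs this $+1$ and the corresponding $y\in J(y_{n-1})$ with SNP in $D[J(y_{n-1})]$ has SNP in $D$. If $L'$ is periodic, pick the largest-index out-neighbour $x_j$ of $x_n$ (which exists since $D$ has no sink); since $x_n$ dominates every feed of every $Sed^q(L')$, $x_j$ is never the feed, hence must eventually become bad, and then $x_j\in N^{++}(y)\setminus (G_{Sed^q(L')}\cup J(y))$ supplies the missing $+1$. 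In both cases one obtains a second SNP vertex distinct from $x_n$. Your plan to ``follow the original proof of Theorem~\ref{t.tourn} most closely'' is the right instinct, but that original proof already passes to $D\setminus\{x_n\}$; sedimenting $L$ in $D$ is a departure from it, not an imitation.
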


Using same arguments of the proof of the previous result, we can generalize it as follows:

\begin{theorem}
\label{t.nosink}
Let $D$ be a good digraph. Suppose that for every vertex $x$ incident to a missing edge, we have $J(x)$ contains at least two vertices with the SNP in $D[K(J(x))]$. If $D$ has no sink vertex, then it has at least two vertices with the SNP.
\end{theorem}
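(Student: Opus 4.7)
The plan is to follow the Havet--Thomass\'e sedimentation template, as generalized by Ghazal in \cite{ghazal2015remark}, but applied at the level of the intervals $J(x)$ rather than at the level of individual vertices. Take a good weighted median order $L = x_1,\ldots,x_n$ of $D$ with feed vertex $f = x_n$. If $J(f) \neq \{f\}$, the hypothesis supplies two distinct vertices $p,q \in J(f)$ with the SNP in $D[J(f)] = D[K(J(f))]$, and Theorem~\ref{th1} lifts each to an SNP vertex of $D$, finishing the proof immediately. So I may assume $f$ is whole; then $f$ trivially satisfies the SNP in $D[\{f\}]$, Theorem~\ref{th1} produces $f$ as a first SNP vertex of $D$, and the task reduces to exhibiting a second one.

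For the second vertex, iterate the sedimentation: the sequence $L,\,Sed(L),\,Sed^2(L),\ldots$ consists of good weighted median orders (by the lemma preceding Theorem~\ref{disjointstars}) and either stabilizes ($L$ is \emph{stable}) or cycles ($L$ is \emph{periodic}). In the stable case, fix $q$ so that $Sed^q(L) = y_1,\ldots,y_n$ realizes the strict inequality $\omega(N^+(y_n) \setminus J(y_n)) < \omega(G_{Sed^q(L)} \setminus J(y_n))$. If $y_n$ is non-whole the hypothesis already yields two SNP vertices, so assume $y_n$ is whole. Truncating to $L' = y_1,\ldots,y_{n-1}$ gives a good weighted median order of $D - y_n$ (removing a whole vertex leaves every $K(\xi)$ intact, so $D-y_n$ is still a good digraph); applying the opening step to $L'$ produces an SNP vertex $v \in J_{D-y_n}(y_{n-1})$ of $D - y_n$. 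The strict inequality above provides exactly the slack needed so that even the possible arc $v \to y_n$ in $D$ does not destroy the SNP, so $v \neq f$ has the SNP in $D$. In the periodic case the equality $\omega(N^+(\cdot) \setminus J(\cdot)) = \omega(G_{\cdot} \setminus J(\cdot))$ recurs at every iterate and the cyclic rearrangement prescribes a strong pattern of arc orientations between the bad block and $J(\cdot)$ at each step; were $f$ the only SNP vertex, following the sedimentation around its period would propagate the deficit $|N^+(v)| > |N^{++}(v)|$ across the non-whole parts of the order and, with no strict inequality available to absorb it, force some vertex to have empty out-neighbourhood, contradicting the no-sink hypothesis. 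Extracting a suitable iterate $Sed^q(L)$ whose feed differs from $f$ then yields the second SNP vertex by the opening analysis.

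The main obstacle is the bookkeeping in the stable case: one must check that the strict inequality really does compensate for the possible gain of $y_n$ in $N^+_D(v)$, and that the truncation preserves both the goodness of the digraph and of the median order. Both verifications parallel the arguments of \cite{ghazal2015remark} for the matching case, replacing ``$\{a,b\}$'' there by ``$J(\cdot)$'' throughout. The hypothesis that every $J(x)$ contains at least two SNP vertices in $D[K(J(x))]$ is precisely what allows the recursion on $D - y_n$ to produce an SNP vertex of $D - y_n$ distinct from $f$, rather than merely reproducing $f$.
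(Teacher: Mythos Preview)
Your overall strategy is in the right spirit, but the execution diverges from the paper's proof in a way that creates real gaps.

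The paper truncates \emph{first} and then sediments: after observing that the whole feed vertex $f=x_n$ has the SNP, it passes to $L'=x_1\ldots x_{n-1}$, a good median order of $D\setminus\{x_n\}$, and iterates $Sed$ on $L'$ inside $D\setminus\{x_n\}$. You instead sediment on the full order $L$ in $D$ and only truncate afterwards. This reversal is what breaks your stable case. The strict inequality you obtain, $\omega(N^+(y_n)\setminus J(y_n))<\omega(G_{Sed^q(L)}\setminus J(y_n))$, is a statement about $y_n$, not about the vertex $v\in J(y_{n-1})$ you then extract from the truncated order; it gives no slack whatsoever for the possible arc $v\to y_n$. (In fact, if $y_n$ is whole and $y_n\neq f$, the strict inequality already makes $y_n$ a second SNP vertex and the truncation is superfluous; the only case where you actually need it is $q=0$, $y_n=f$, and there your argument simply fails.) In the paper's version the strict inequality lives on the feed vertex $y_{n-1}$ of $Sed^q(L')$ in $D\setminus\{x_n\}$, which is exactly the vertex whose $N^+$ gains $x_n$ when you pass back to $D$, so the slack is in the right place.

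Your periodic case is not a proof. The paper's argument is concrete: since $D$ has no sink, $x_n$ has an out-neighbor $x_j$ (chosen with largest index); because $x_n$ is always an out-neighbor of the feed of every $Sed^q(L')$, the vertex $x_j$ is never the feed, and periodicity then forces $x_j$ to be bad at some iterate $Sed^q(L')=y_1\ldots y_{n-1}$. For the vertex $y\in J(y_{n-1})$ with the SNP in $D[J(y_{n-1})]$ one then has $y\to x_n\to x_j$, so $x_j$ joins $N^{++}_D(y)$ and supplies the missing unit. Your sketch (``propagate the deficit \ldots force a sink'') does not identify any such mechanism and cannot be completed as written.
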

\begin{proof}
Consider a good median order $L = x_1 \ldots x_n$ of $D$. If $x_n$ is incident to a missing edge, then $J(x_n)$ contains at least two vertices with the SNP in $D[K(J(x_n))]$. Hence, by Theorem \ref{th1}, the result holds. Otherwise, $x_n$ is a whole vertex. So $J(x_n) = {x_n}$. We have $x_n$ has the SNP in $D$ by Theorem \ref{th1}. So we need to find another vertex with the SNP. Consider the good median order $L' = x_1\ldots x_{n-1}$ of $D \setminus \{x_n\}$. If $L'$ is stable, then there is $q$ for which $Sed^q(L') = y_1\ldots y_{n-1}$ and $|N^+(y_{n-1}) \setminus J(y_{n-1})| < | G_{Sed^q}(L') \setminus J(y_{n-1})|$. Note that $y_1\ldots y_{n-1}x_n$ is also a good median order of $D$. There exists $y \in J(y_{n-1})$ such that $y$ has the SNP in $D[J(y_{n-1})]$. So $| N^+(y) |=| N^+_{D[y_1,y_{n-1}]}(y) \setminus J(y) | + |N^+(y) \cap J(y)| +1 \leq |G_{Sed^q}(L') \setminus J(y)| + |N^+(y) \cap J(y)| \leq |N^{++}(y) \setminus J(y)| + |N^{++}(y) \cap J(y)| = |N^{++}(y)|$. Now suppose that $L'$ is periodic. Since $D$ has no sink, the vertex $x_n$ has an out-neighbor $x_j$. Choose $j$ to be the greatest (so that it is the last vertex of its corresponding interval). Note that for every $q$, we have $x_n$ is an out-neighbor of the feed vertex of $Sed^q(L')$. So $x_j$ is not the feed vertex of any $Sed^q(L')$. Since $L'$ is periodic, the vertex $x_j$ must be a bad vertex of $Sed^q(L')$ for some integer $q$, otherwise the index of $x_j$ would always increase during the sedimentation process. Let $q$ be such an integer. Set $Sed^q(L') = y_1\ldots y_{n-1}$. There exists $y \in J(y_{n-1})$ such that $y$ has the SNP in $D[J(y_{n-1})]$. Note that $y \rightarrow x_n \rightarrow x_j$ and $(G_{Sed^q}(L') \setminus J(y)) \cup \{x_j\} \subseteq N^{++}(y) \setminus J(y)$. So $|N^+(y)| = |N^+_{D[y_1,y_{n-1}]}(y) \setminus J(y)| + 1 + |N^+(y) \cap J(y)| = |G_{Sed^q}(L') \setminus J(y)| + 1 + |N^+(y) \cap J(y)| =  |G_{Sed^q}(L') \setminus J(y)| + |\{x_j\}| + |N^+(y) \cap J(y)| \leq |N^{++}(y) \setminus J(y)| + |N^{++}(y) \cap J(y)| = |N^{++}(y)|$.
\end{proof}

\begin{theorem}
Let $D$ be a digraph missing disjoint paths of length at most 2. Suppose that $\Delta(D)$ is composed of only directed cycles and double cycles. If $D$ has no sink vertex, then it has at least two vertices with the SNP.
\end{theorem}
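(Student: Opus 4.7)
The plan is to verify the hypotheses of Theorem \ref{t.nosink} and conclude directly. That is, I will check that (a) $D$ is a good digraph, and (b) for every vertex $x$ incident to a missing edge of $D$, the set $J(x)$ contains at least two vertices with the SNP in $D[J(x)]$. Since $D$ has no sink by assumption, Theorem \ref{t.nosink} will immediately yield two vertices with the SNP in $D$.

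First I would analyze the structure of $\mathcal{I}_D$. Because the missing paths of $D$ are disjoint and have length at most~$2$, every two missing edges in distinct missing paths of $D$ are vertex-disjoint. Hence each directed cycle and each double cycle of $\Delta(D)$ is a full connected component of $\mathcal{I}_D$, and for every vertex $x$ incident to a missing edge we have $J(x)=K(C)$ for some such cycle $C$. This already handles the case of whole vertices (where $J(x)=\{x\}$ is trivially an interval).

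Next I would verify intervality and the existence of two SNP-vertices inside each $K(C)$, by cases:
\begin{itemize}
\item If $C$ is a directed cycle of $\Delta(D)$, then by Lemma \ref{l.cycle.interval} the set $K(C)$ is an interval of $D$, and by Lemma \ref{l.cycle.snp} \emph{every} vertex of $K(C)$ has the SNP in $D[K(C)]$; in particular there are at least two such vertices since $|K(C)|\ge 4$.
\item If $C=a_1b_1c_1,\ldots,a_kb_kc_k$ is a double cycle, observe that the induced digraph $D[K(C)]$ is a tournament missing exactly the disjoint paths $a_ib_ic_i$, so $C$ is still a double cycle in $\Delta(D[K(C)])$ (the losing relations within $K(C)$ coincide with those in $D$, as any arc between a vertex of $K(C)$ and a vertex outside is fixed by the interval property). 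Then Proposition \ref{DC-int} applied to $D[K(C)]$ gives that $K(C)$ is an interval of $D[K(C)]$, which combined with the same interval property between $K(C)$ and its outside (obtained by an arbitrary orientation of the missing edges lying outside $K(C)$, as in the proof of the preceding theorem) shows that $K(C)$ is an interval of $D$. Finally, Proposition \ref{a} produces a vertex in $\{a_s,c_s\}$ with the SNP in $D[K(C)]$, and Proposition \ref{b} produces a vertex $b_t$ with the SNP in $D[K(C)]$. These two vertices are distinct since one lies in the $\{a_i,c_i\}$-layer and the other in the $\{b_i\}$-layer.
\end{itemize}

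Having verified the two hypotheses of Theorem \ref{t.nosink}, the conclusion follows. The main subtlety I anticipate is the transfer between $D$ and $D[K(C)]$ in the double-cycle case: one must be careful that the losing relations defining $C$ in $\Delta(D)$ survive restriction, and that the intervality established in $D[K(C)]$ together with the interval behavior of $K(C)$ in $D$ combine to give a genuine interval of $D$. This is exactly the orientation-of-outer-missing-edges trick used in the proof of the single-vertex version, and I would invoke it verbatim; once this is handled, the rest is just an assembly of Theorems \ref{th1}, \ref{t.nosink}, Propositions \ref{a}, \ref{b}, \ref{DC-int}, and Lemmas \ref{l.cycle.interval}, \ref{l.cycle.snp}.
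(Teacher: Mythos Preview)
Your proposal is correct and follows essentially the same approach as the paper: verify that $D$ is a good digraph and that each $J(x)$ (for $x$ on a missing edge) contains at least two SNP-vertices in $D[J(x)]$ via Lemma~\ref{l.cycle.snp} and Propositions~\ref{a},~\ref{b}, then invoke Theorem~\ref{t.nosink}. One phrasing to tighten: saying ``Proposition~\ref{DC-int} applied to $D[K(C)]$ gives that $K(C)$ is an interval of $D[K(C)]$'' is vacuous, since any vertex set is an interval of the digraph it induces; what you want (and what you in fact describe afterward) is to orient the length-$1$ missing paths outside $K(C)$, apply Proposition~\ref{DC-int} to the resulting tournament missing disjoint paths of length~$2$, and transfer the interval property back to $D$.
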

\begin{proof}
For every $x$ which is incident to a missing edge, we have $J(x)$ is a cycle or double cycle, and hence $J(x)$ contains at least two vertices with the SNP in $D[K(J(x))]$ by Lemma \ref{l.cycle.snp} and Propositions \ref{a} and \ref{b}. Therefore, by Theorem \ref{t.nosink}, there are at least two vertices with the SNP.
\end{proof}

\section{Proof of Theorem \ref{th_good_completion}}
\label{proof}
\begin{proof}[Proof of Theorem \ref{th_good_completion}]
For $x\in V(D)$, if $x$ is a whole vertex then $J(x)=\{x\}$. Otherwise, we have either $J(x)$ is an interval of $D$ or $J(x)=K(P)$ for some directed path $P$ (connected component) in $\Delta$. Recall that for all $u\in V(D) \setminus J(x)$, we have $u$ is adjacent to every vertex that appear in $J(x)$. Hence, orienting missing edges outside of $J(x)$ has no influence on $J(x)$ regarding the in- and out-neighborhoods of the vertices as well as the losing relations within $J(x)$. Thus by orienting all the missing edges that appear outside of the collection of the $J(x)$'s that are intervals of $D$, we obtain a new digraph $D_1$ such that for every $J(x)$ in $D_1$, we have $J(x)=\{x\}$ which is a trivial interval of $D_1$, or $J(x)$ is an interval of both $D$ and $D_1$. This means that $D_1$ is a good digraph. We will use this fact to create a particular good completion of $D$. In fact, starting from $D$, we take a $J(x)$ such that $J(x)$ is not an interval of $D$; that is $J(x)=K(P)$ for some directed path $P=a_1b_1,\ldots,a_kb_k$ in $\Delta(D)$, namely $a_i \rightarrow a_{i+1}$ and $b_i \rightarrow b_{i+1}$ for $i=1,\ldots,k-1$. Note that, as a connected component, $P$ must be a maximal directed path in $\Delta$. So $a_1b_1$ is a good missing edge since $d_{\Delta}^-(a_1b_1)=0$. We may assume without loss of generality that $(a_1, b_1)$ is a convenient orientation of $a_1b_1$. We orient $a_ib_i$ as $(a_i,b_i)$ for $i=1,\ldots,k-1$. We follow the same method of orientation for the $J(x)$'s that are not intervals of $D$. We denote by $F$ the set of the new arcs added to $D$. Set $D'=D+F$. Hence $D'$ is a good completion of $D$. Let $L$ be a good median order of the good digraph $D'$ and let $f$ denote its feed vertex. By Theorem \ref{th1}, for all $x \in J(f)$ we have $|N_{D'}^+(x) \setminus J(f)| \leq |G_L^{D'} \setminus J(f)| \leq |N_{D'}^{++}(x) \setminus J(f)|$.
\setcounter{case}{0}
\begin{case}
$f$ is not incident to any new arc of $F$.\\
In this case $f$ is a whole vertex, or $J(f)$ is an interval both in $D$ and $D'$. If $J(f)=\{f\}$, then $f$ has the SNP in $D'[J(f)]=D[J(f)]$. If $J(f)$ is an interval, then there exists $ p \in J(f)$ such that $p$ has the SNP in $D'[J(f)]=D[J(f)]$. So
$|N_{D'}^+(p) \cap J(f)| \leq |N_{D'}^{++}(p) \cap J(f)|$.
Since $f$ is not incident to any new arc, every element in $J(f)$ is not incident to any new arc. So $N_{D'}^+(p)=N_D^+(p)$.
We need to show that $N_{D'}^{++}(p) \setminus J(f) \subseteq N_{D}^{++}(p) \setminus J(f)$. Let $v \in N_{D'}^{++}(p) \setminus J(f)$. There exists $x \in V(D)$ such that $p \rightarrow x \rightarrow v \rightarrow p$ in $D'$, where $p \rightarrow x$ and $v \rightarrow p$ in $D$ since $p$ is not incident to any new arc. If $xv$ is not a missing edge of $D$, then $v \in N_{D}^{++}(p)$. Assume now that $xv$ is a missing edge of $D$. If $xv$ is good, then $x \rightarrow v$ is a convenient orientation. Since $p \rightarrow x$, we get $p \rightarrow v$ in $D$ or $v \in N_D^{++}(p)$, by the definition of the convenient orientation. But $v \rightarrow p$, hence we must have $v \in N_D^{++}(p)$. If $xv$ is not good, then there is a missing edge $rs$ such that $rs \rightarrow xv$, namely $s \rightarrow v$ and $x \notin N^+(s) \cup N^{++}(s)$. Note that $ps$ is not a missing edge. As $p \rightarrow x$, we must have $p \rightarrow s$ since otherwise $s \rightarrow p \rightarrow x$, which is a contradiction to the fact that $rs \rightarrow xv$. Hence we get $p \rightarrow s \rightarrow v$. Thus $v \in N_D^{++}(p)$. So $|N_{D'}^{++}(p) \setminus J(f)| \leq |N_{D}^{++}(p) \setminus J(f)|$. Now, we can compare $|N^+(p)|$ and $|N^{++}(p)|$. In fact, $|N_D^+(p)|=|N_{D'}^+(p)|=|N_{D'}^+(p) \setminus J(f)|+|N_{D'}^+(p) \cap J(f)| \leq |N_{D'}^{++}(p) \setminus J(f)| + |N_{D'}^{++}(p) \cap J(f)| \leq |N_{D}^{++}(p) \setminus J(f)| + |N_{D}^{++}(p) \cap J(f)|=|N_{D}^{++}(p)|$. We conclude that if $f$ is not incident to any new arc of $F$, then there exists $p \in J(f)$ such that $p$ has the SNP in $D'$ and in $D$.
\end{case}
\begin{case}
$f$ is incident to a new arc of $F$.\\
In this case $J(f)= K(P)$, for some path $P$ in $\Delta$, which is also a connected component of $\Delta$. Set $P=a_1b_1,\ldots,a_kb_k$, namely $a_i \rightarrow a_{i+1}$, $b_i \rightarrow b_{i+1}$ for $i=1,\ldots,k-1$. So $f=a_t$ or $f=b_t$. We may suppose, without loss of generality, that $a_i \rightarrow b_i$ in $D'$ for all $ \ i=1,\ldots,k$.
\begin{subcase}
Suppose that $f=a_t$ with $t<k$.\\
Since $a_t \rightarrow b_t$ in $D'$, we have $N_{D'}^+(f)=N_D^+(f) \cup \{b_t\}$. So $|N_D^+(f)|=|N_{D'}^+(f)|-1$. And since $a_{t+1} \rightarrow b_{t+1}$ in $D'$, we have $a_t \rightarrow a_{t+1} \rightarrow b_{t+1} \rightarrow a_t$. Therefore $b_{t+1} \in N_{D'}^{++}(f)$.\\

\noindent{\bf{Claim}.} \emph{We have $N_{D'}^{++}(f) \setminus \{b_{t+1}\} \subseteq N_{D}^{++}(f)$.}
\begin{proof}
Let $v \in N_{D'}^{++}(f) \setminus \{b_{t+1}\}$. There is a vertex $x$ such that $f \rightarrow x \rightarrow v \rightarrow f$ in $D'$.

Suppose first that $x \neq b_t$. Note that $(a_t,b_t)$ is the unique new arc of $F$ that is incident to $f=a_t$. Hence $f \rightarrow x$ and $v \rightarrow f$ are in $D$. If $xv$ is not a missing edge of $D$, then $x \rightarrow v$ in $D$, and hence $v \in N_{D}^{++}(f)$. Assume now that $xv$ is a missing edge of $D$. If $xv$ is good, then $x \rightarrow v$ is a convenient orientation. Since $f \rightarrow x$, we have $f \rightarrow v$ or $v \in N_D^{++}(f)$ by the definition of the convenient orientation. But $v \rightarrow f$, so we must have $v \in N_D^{++}(f)$. If $xv$ is not good, then there is a missing edge $rs$ such that $rs \rightarrow xv$, namely $s \rightarrow v$ and $x \notin N^+(s) \cup N^{++}(s)$. Note that $fs$ is not a missing edge, but $f \rightarrow x$, so we must have $f \rightarrow s$ since otherwise $s \rightarrow f \rightarrow x$, which contradicts $rs \rightarrow xv$. Thus we get $f \rightarrow s \rightarrow v$. Therefore $v \in N_D^{++}(f)$.

Suppose now that $x=b_t$. We have $v \neq b_{t+1}$, hence $va_{t+1}$ is not a missing edge of $D$. Furthermore, we must have $a_{t+1} \rightarrow v$ since otherwise ${x}={b_t} \rightarrow v \rightarrow a_{t+1}$ in $D$, which is a contradiction to the fact that $a_tb_t \rightarrow a_{t+1}b_{t+1}$. Thus ${f}={a_t} \rightarrow a_{t+1} \rightarrow v$, and hence $v \in N_{D}^{++}(f)$. Therefore $N_{D'}^{++}(f) \setminus \{b_{t+1}\} \subseteq N_{D}^{++}(f)$.
\end{proof}

\noindent Thus $|N_{D'}^{++}(f)|-1 \leq |N_{D}^{++}(f)|$. Since $J(f)=\{f\}$ in $D'$, we get $|N_{D'}^+(f)| \leq |N_{D'}^{++}(f)|$ by Theorem \ref{th1} . Therefore $|N_D^+(f)|=|N_{D'}^+(f)|-1 \leq |N_{D'}^{++}(f)|-1 \leq |N_{D}^{++}(f)|$.
\end{subcase}
\begin{subcase}
Suppose that $f=a_k$.\\
We reorient the missing edge $a_kb_k$ as $b_k \rightarrow a_k$. Let $D{''}$ denote the new oriented graph. Note that $L$ is a good median order of the good oriented graph $D{''}$, since $a_k \rightarrow b_k$ is a backward arc (directed from right to left) in $D'$. Clearly, $N_{D{''}}^+(f)=N^+(f)$. So $|N^+(f)|=|N_{D{''}}^+(f)|$ and $J(f)=\{f\}$ in $D{''}$. Moreover, $f$ has the SNP in $D{''}$. Thus $|N_{D{''}}^+(f)| \leq |N_{D{''}}^{++}(f)|$. We need to show that $N_{D{''}}^{++}(f) \subseteq N_D^{++}(f)$. Let $v \in N_{D{''}}^{++}(f)$. There exists $x \in V(D)$ such that $f \rightarrow x \rightarrow v \rightarrow f$ in $D{''}$, where $f \rightarrow x$ and $v \rightarrow f$ in $D$. If $xv$ is not a missing edge of $D$, then $x \rightarrow v$ in $D$, hence $v \in N_{D}^{++}(f)$. Assume now that $xv$ is a missing edge of $D$. If $xv$ is good, then $x \rightarrow v$ is a convenient orientation. Since $f \rightarrow x$, we have $f \rightarrow v$ or $v \in N_D^{++}(f)$ by the definition of the convenient orientation. But $v \rightarrow f$, so we must have $v \in N_D^{++}(f)$. If $xv$ is not good, then there is a missing edge $rs$ such that $rs \rightarrow xv$, namely $s \rightarrow v$ and $x \notin N^+(s) \cup N^{++}(s)$. Note that $fs$ is not a missing edge. Since $f \rightarrow x$, we must have $f \rightarrow s$ because otherwise $s \rightarrow f \rightarrow x$, which is a contradiction to the fact that $rs \rightarrow xv$. Hence $f \rightarrow s \rightarrow v$. Thus $v \in N_D^{++}(f)$. So $|N_{D{''}}^{++}(f)| \leq |N_D^{++}(f)|$. Therefore, $|N_D^+(f)|=|N_{D{''}}^+(f)| \leq |N_{D{''}}^{++}(f)| \leq |N_{D}^{++}(f)|$.
\end{subcase}
\begin{subcase}
Suppose that $f=b_t$.\\
Since $N_{D{'}}^+(f)=N^+(f)$, we have $|N^+(f)|=|N_{D{'}}^+(f)|$ and $J(f)=\{f\}$ in $D{'}$. Moreover, $f$ has the SNP in $D{'}$. Hence $|N_{D{'}}^+(f)| \leq |N_{D{'}}^{++}(f)|$. We need to show that $N_{D{'}}^{++}(f) \subseteq N_D^{++}(f)$. Let $v \in N_{D{'}}^{++}(f)$. So there exists $x \in V(D)$ such that $f \rightarrow x \rightarrow v \rightarrow f$ in $D{'}$. Note that $f \rightarrow x$ and $v \rightarrow f$ in $D$.\\
\indent 1) If $xv$ is not a missing edge of $D$, then $x \rightarrow v$ in $D$, and hence $v \in N_{D}^{++}(f)$.\\
\indent 2) If $xv$ is a missing edge of $D$, then:\\
\indent \indent   i) If $xv$ is good, then $x \rightarrow v$ is a convenient orientation. As $f \rightarrow x$, we get $f \rightarrow v$ or $v \in N_D^{++}(f)$ by the definition of the convenient orientation. But $v \rightarrow f$, so we must have $v \in N_D^{++}(f)$.\\
\indent \indent   ii) If $xv$ is not good, then there is a missing edge $rs$ such that $rs \rightarrow xv$, namely $s \rightarrow v$ and $x \notin N^+(s) \cup N^{++}(s)$. Note that $fs$ is not a missing edge. As $f \rightarrow x$, we must have $f \rightarrow s$ since otherwise $s \rightarrow f \rightarrow x$, which is a contradiction to the fact that $rs \rightarrow xv$. So $f \rightarrow s \rightarrow v$. Thus $v \in N_D^{++}(f)$. It follows that $|N_{D{'}}^{++}(f)| \leq |N_D^{++}(f)|$. Therefore $|N_D^+(f)|=|N_{D{'}}^+(f)|  \leq |N_{D{'}}^{++}(f)|  \leq |N_{D}^{++}(f)|$.
\end{subcase}
\end{case}
Finally, $f$ has the SNP in $D'$ and  in $D$.
\end{proof}
\noindent Note that, $p=f$ or $p$ is any vertex in $J(f)$ satisfying the SNP in $D[J(f)]$ when $J(f)$ is an interval of $D$.
\subsection*{Acknowledgements}
The authors thank Pr. Amine El Sahili for his useful comments and remarks. This work is done during the PhD thesis preparation of Moussa Daamouch under the supervision of Pr. Amine El Sahili and Dr. Salman Ghazal.

\end{document}